\newtheorem{thm}{Theorem}[section]
\newtheorem{cor}[thm]{Corollary}
\newtheorem{lem}[thm]{Lemma}
\newtheorem{prop}[thm]{Proposition}
\theoremstyle{definition}
\newtheorem{defn}{Definition}[section]
\theoremstyle{remark}
\newtheorem{rem}{Remark}[section]
\numberwithin{equation}{section}
\newcommand{\C}{{\mathbb{C}}}
\newcommand{\R}{{\mathbb{R}}}
\newcommand{\K}{\mathbb{K}}
\newcommand{\Lie}{\mathcal{L}}
\newcommand{\Li}{\mathbb{L}}
\newcommand{\sch}{s^{\nabla}}
\newcommand{\rch}{\mathrm{\rho}^{\nabla}}
\newcommand{\D}{{\rm D}}
\newcommand{\ric}{{\rm Ric}}
\newcommand{\tr}{{\rm tr}}
\newcommand{\en}{{\rm End}}
\newcommand{\aut}{{\rm Aut}}
\newcommand{\di}{{\rm div}}
\newcommand{\Hess}{{\rm Hess}}
\newcommand{\la}{\bm{\langle}}
\newcommand{\ra}{\bm{\rangle}}
\newcommand{\Ham}{{\rm Ham}(M,\omega)}
\newcommand{\ham}{\mathfrak{ham}}
\newcommand{\vol}{{\rm{vol}}}
\newcommand{\thmref}[1]{Theorem~\ref{#1}}
\newcommand{\lemref}[1]{Lemma~\ref{#1}}
\newcommand{\defref}[1]{Definition~\ref{#1}}
\newcommand{\propref}[1]{Proposition~\ref{#1}}
\newcommand{\corref}[1]{Corollary~\ref{#1}}
\newcommand{\remref}[1]{Remark~\ref{#1}}
\newcommand{\ac}{{\mathcal{AC}}_{\omega}}
\newcommand{\g}{{\rm{grad}}}
\newcommand{\End}{\operatorname{End}}
\newcommand{\IM}{\operatorname{Im}}
\begin{document}

\title[complexified orbit] % running head version
{Hermitian Calabi functional in complexified orbits}

%%
% See the complete version of this file, testart.tex, for more
% complicated examples
%
\author{Jie He}
\address[Jie He]{School of Mathematics and Physics, Beijing University of Chemical Technology,  Chaoyang District, Beijing 100029, P.R. China}

%% Note the doubled @@:
\email[Jie He]{hejie@amss.ac.cn; moltke@sina.com}
%\thanks{Research supported in part by NSF grantCCR-87-10433 and DARPA Contract N00019-89-J-1988.}
\author{Kai Zheng}
\address[Kai Zheng]{University of Chinese Academy of Sciences, Beijing 100049, P.R.
China}
%% Note the doubled @@:
\email[KaiZheng]{KaiZheng@amss.ac.cn}
%\date{\today}

%\subjclass{Primary 05C38, 15A15; Secondary 05A15, 15A18}

\maketitle

\begin{abstract}
Let $(M,\omega)$ be a compact symplectic manifold. We denote by $\ac$ the space of all almost complex structure compatible with $\omega$.  $\ac$ has a natural foliation structure with the complexified orbit as leaf. We obtain an explicit formula of the Hessian of Hermitian Calabi functional at an extremal almost K\"ahler metric in $\ac$. We prove that the Hessian of Hermitian Calabi functional is semi-positive definite at critical point when restricted to a complexified orbit, as corollaries we obtain some results analogy to K\"ahler case. We also show weak parabolicity of the Hermitian Calabi flow.
\end{abstract}
\tableofcontents

\section{Introduction}
%Extremal K\"ahler metrics(c.f. \cite{gauduchon2010calabi, tian2000canonical, szekelyhidi2014introduction}), 

%In the 1980's, Calabi \cite{MR645743,MR780039} introduced extremal K\"ahler metrics on K\"ahler %manifolds, which are one of fundamental topics in complex geometry. 
Extremal almost K\"ahler (EAK) metric extends Calabi's extremal K\"ahler metric \cite{MR645743,MR780039} on a symplectic manifold. They are critical points of the Hermitian Calabi functional, which is the squared norm of the Hermitian scalar curvature. Hermitian Calabi flow is the gradient flow of the Hermitian Calabi functional.

In this paper, we compute Hessian of the Hermitian Calabi functional and prove weak parabolicity of the Hermitian Calabi flow.

%Extremal K\"ahler metrics, which was introduced by Calabi \cite{MR645743,MR780039} as an attempt to seek canonical metrics in K\"ahler manifolds in the 1980's, are one of fundamental topics in complex geometry.
%Extremal K\"ahler metrics are critical points of the $L^2$ norm of scalar curvature of K\"ahler metrics (known as the Calabi functional)within a given K\"ahler class.  

%When the ambient manifolds are almost K\"ahler, the notaion of extremal K\"ahler metric has a natural generalisation(\cite{apostolov2003curvature}, \cite{lejmi2010extremal},\cite{gauduchon2010calabi}). 

%Extremal K\"ahler metrics were extended in the almost K\"ahler category. 
Before we state results explicitly, we recall some notions.
We let $(M,\omega)$ be a symplectic manifold, where $\omega$ is a given symplectic form on $M$.
An almost K\"ahler structure consists a symplectic manifold $(M,\omega)$ and an $\omega$-compatible almost-complex structure $J$, which means they satisfy two conditions
\begin{align*}
\text{$\omega$-tamed: }& \omega(\cdot,J\cdot)>0;\\
J\text{-invariant: }& \omega(J\cdot,J\cdot)=\omega(\cdot,\cdot).
\end{align*}
The compatibility conditions leads to a $J$-invariant Riemannian metric 
\begin{align}\label{compatibility}
g_J(\cdot,\cdot):=\omega(\cdot , J\cdot).
\end{align}
From now on, we always fix the symplectic form $\omega$ and we use the almost complex structure $J$ and the Riemannian metric $g$ interchangeable.

We collect all $\omega$-compatible almost-complex structures in the set
\begin{align*}
\mathcal{AC}_{\omega}:=\{J\in \End(TM):J^2=-1 \text{ and $J$ satisfies \eqref{compatibility}} \},
\end{align*}
which is an infinite-dimension K\"ahler manifold equipped with the complex structure $\mathbb J$ and the $L^2$ Riemannian metric 
\begin{align}\label{defj}
\mathbb J(v):=Jv,\quad \la u,v\ra_J:=\int_Mg_J(u,v)\vol, \quad \vol:=\frac{\omega^m}{m!}, \quad  \forall u,v\in T_J\ac.
\end{align}

Let $\sch(J)$ be the Hermitian scalar curvature of an almost-complex structure $J\in\mathcal{AC}_{\omega}$. 
According to Fujiki \cite{MR1207204} and Donaldson \cite{MR1622931}, the Hermitian scalar curvature is the moment map under the action of Hamiltonian symplectomorphisms $\Ham$ on $\ac$.
The Hermitian Calabi functional $\mathcal C:\ac\to \mathbb R$ is interpreted as the squared norm of the moment map, 
$$
\mathcal C (J):=\int_M[\sch(J)]^2\vol.
$$
The extremal almost K\"ahler (EAK) metrics satisfy the equation
\begin{align}
\Lie_{\K}J=0,
\end{align}
where, $\K=:J\g \sch$ is the extremal vector field and $\Lie$ is the Lie derivative. Clearly, almost K\"ahler metric with constant Hermitian scalar curvature is EAK.

Donaldson \cite{MR1622931,MR1736211,MR2103718} initialed a programme on the study of EAK metrics.
There are many explicit examples of EAK (non-K\"ahler) metrics provided in \cite{MR2807093} by Apostolov, Calderbank, Gauduchon and T{\o}nnesen-Friedman.
Recently, many results on extremal K\"ahler metrics have been extended to the EAK metrics. Lejmi \cite{MR2747965} generalised the Futaki invariant and the extremal vector field to the almost K\"ahler setting. Keller-Lejmi \cite{MR4118148} obtained the lower bound of the Hermitian Calabi functional. The deformation problem for the EAK metrics has been studied in \cite{MR2661166, MR1916979}. Vernier \cite{MR4174303} constructed almost K\"ahler metric with constant Hermitian scalar curvature by the gluing method. Legendre \cite{MR3967373} proved that under toric symmetry, the existence of EAK metric implies the existence of extremal K\"ahler metric.
In general, the existence of EAK metrics are expected. We refer interested readers to the survey \cite{MR1969266} of Apostolov and Dr\u{a}ghici on almost K\"ahler geometry.
%There are also many works on Calabi-Yau equation on almost K\"ahler manifolds \cite{MR2330417, MR2787695,MR2313334,MR2743450, MR3959856}. 

%Let $\Ham$ be the group of Hamiltonian symplectomorhisms, which has a natural action on $\ac$. We denote by $\mathcal O_J$ the orbit of $\Ham$ action through $J$. 

The  Hamiltonian symplectomorhisms group $\Ham$ 
has a natural action on $\ac$. We denote by $\mathcal O_J$ the orbit of $\Ham$ action through $J$. 
Any element $\g_{\omega}f$ in the Lie algebra $\ham$ of $\Ham$ defines a vector field on the tangent space of $\mathcal O$:
$$
P(f):=\frac{1}{2} \mathcal L_{\g_{\omega}f}J.
$$
We also define $JP=J\circ P: C^{\infty}(M,\R)\to T_J\ac$ and their adjoint operators $P^\ast$ and $(JP)^*$, see Definition \ref{Past} for an accurate statement.

Thanks to Donaldson's observation \cite{MR1622931}, the Lie algebra $\ham$ could be complexified and it induces a distribution $D$ in the tangent space $T\ac$ as follows
$$
 D_J = \{P(f), JP(f):  \text{ for all }f\in C^{\infty}(M,\R)\},\quad \forall J\in\ac .
$$ 
%where $C_0^{\infty}(M,\R)$ is the set of smooth function with zero average.
Actually, $ D$ is holomorphic and integrable, it generates an integral submanifold $\mathcal D\subset \ac$, which is called a complexified orbit. %(see \cite[Chapter 4]{MR1787650}). 
We denote by $\mathcal D_J$ the complexified orbit through $J$ (we may omit the lower index of $\mathcal D_J$ for convenience).
% since for a given $J$, the complexified orbit is understood). 

Using the notations above, we could explicitly state the Hessian $\Hess \mathcal C$ of the Hermitian Calabi functional.
\begin{thm}\label{prop:second main}
For any $J\in\ac$ and for $u,v\in T_J\ac$,  we choose a two-parameter family of $J(t_1,t_2)$ such that $J(0,0)=J,\quad \partial_{t_1}J(0,0)=u,\quad \partial_{t_2}J(0,0)=v$.
Then we have
\begin{align*}
\Hess \mathcal C(u,v)=-  \la \frac{\partial^2 J}{\partial t_1\partial t_2}\vert_{(0,0)}, J\Lie_\K J\ra
+\la  u,H(v) \ra
-\la u, v\mathcal L_{\K }J\ra.
\end{align*}
Here, we introduce the operator 
\begin{align*}
H(u) := 2JP(JP)^*u-J\Lie_{\K}u.
\end{align*}
Furthermore, we have the following applications.
\begin{enumerate}
\item \label{prop:second eak}
If $J$ is EAK, then we have
\begin{align}\label{hessc0}
\Hess \mathcal C(u,v)=\la H(u),v\ra,
\end{align}
moreover, the operator $J\Lie_{\K}$ is self-adjoint on $T_J\ac$ and semi-positive on $T_J\mathcal D$, see \lemref{jlk}.

\item \label{calabih} 

The EAK metric $J$ is a local minimum of the Hermitian Calabi functional on the complexified orbit $\mathcal D_J$.

If $J$ is EAK, then $\Hess \mathcal C$ restricted to $\mathcal D_J$ is semi-positive 
$$
\Hess\mathcal C(v,v)\geq 0, \ \ \ \forall v\in T_J\mathcal D.
$$ 
Moreover, $\Hess\mathcal C $ is strictly positive on the subspace $\IM JP$,
and vanishes on the subspace $\IM P$. 
Precisely, for any $f_1, f_2\in C^{\infty}(M,\R)$, we have 
\begin{align*}
\Hess\mathcal C(JP(f_1), JP(f_2))
&=2\la \Li(f_1),\Li(f_2)\ra-\frac{1}{2}\la \mathcal L_{\K } (f_1),\mathcal L_{\K }(f_2)\ra\\
&=2\la\Li^+(f_1), \Li^-(f_2)\ra,
\\
\Hess\mathcal C(P(f_1), P(f_2))
&=0.
\end{align*}
In which, $\Li$ is the Lichnerowicz operator $\Li=P^*P$, c.f. \defref{myl1} and the Calabi operators $\Li^{\pm}$ are self-adjoint and semi-positive, c.f. Definition \ref{cala12}.

\item \label{prop:second chsc}
The almost K\"ahler metric of constant Hermitian scalar curvature is a local minimum of the Hermitian Calabi functional on $\ac$.

Actually, if $J$ has constant Hermitian scalar curvature, then
\begin{align} \label{hesschsc}
\Hess \mathcal C(u,v)=2\la  (JP)^*u,(JP)^*v\ra,
\end{align}
which is semi-positive on $T_J\ac$ and vanishes iff $v\in\ker (JP)^*$.

\item \label{prop:second geodesic}
If $J(t)$ is a geodesic in terms of the Riemannian metric \eqref{defj} in $\ac$, then the geodesic equation satisfies $J''=JJ'J'$ and the second order derivative of the Hermitian Calabi functional along $J(t)$ obeys
\begin{align} \label{hess geodesic}
\frac{d^2}{dt^2}\mathcal C(J_t)=
\la H(J'), J'\ra.
\end{align}
\end{enumerate}
\end{thm}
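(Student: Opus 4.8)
\emph{Proof plan.} The plan is to reduce everything to the first and second variation of the Hermitian scalar curvature $\sch$, since $\mathcal C(J)=\int_M[\sch]^2\vol$ and the volume form $\vol=\omega^m/m!$ is fixed throughout $\ac$. First I would invoke the Fujiki--Donaldson moment map picture: testing the defining property of $\sch$ against $f\in\ham$ and using the symplectic form $\la\mathbb J u,v\ra$ of $\ac$ identifies the linearisation of $\sch$ as $D\sch=(JP)^*$. Differentiating once then gives $d\mathcal C(u)=2\int_M\sch\,D\sch(u)\,\vol=2\la JP(\sch),u\ra$, so the gradient is $\g\mathcal C=2JP(\sch)$. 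Because the Hamiltonian vector field $\g_\omega f$ is built from $\omega$ alone, $P(f)=\tfrac12\Lie_{\g_\omega f}J$ and hence $2JP(\sch)=-J\Lie_\K J$; in particular $\g\mathcal C=0$ is exactly the EAK equation $\Lie_\K J=0$.

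For the Hessian I would compute the second derivative $\partial_{t_1}\partial_{t_2}\mathcal C|_{(0,0)}$ along the chosen family by applying the Leibniz rule to $\partial_{t_2}\mathcal C=2\la JP(\sch),v\ra_J$ with $v=\partial_{t_2}J$, sorting the $t_1$-derivative into four contributions: (i) the variation of $\sch$ inside $JP(\sch)$, which by $D\sch=(JP)^*$ yields $2\la JP(JP)^*u,v\ra=2\la u,JP(JP)^*v\ra$, the leading part of $H$; (ii) the variation of the frozen direction $v$, whose $t_1$-derivative is $\partial_{t_1}\partial_{t_2}J$ and which pairs with $\g\mathcal C$ to produce $-\la\partial_{t_1}\partial_{t_2}J,J\Lie_\K J\ra$; (iii) the variation of the operator $P$, which is tractable because $\partial_{t_1}P(f)=\tfrac12\Lie_{\g_\omega f}u$ never sees the curvature of $J$; and (iv) the variation of $\mathbb J$ and of the $L^2$ metric $\la\cdot,\cdot\ra_J$. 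After inserting $\g\mathcal C=-J\Lie_\K J$, contributions (iii) and (iv) combine into the remaining terms $-\la u,J\Lie_\K v\ra$ (completing $H$) and $-\la u,v\Lie_\K J\ra$, giving the master formula with $H(v)=2JP(JP)^*v-J\Lie_\K v$. \textbf{The main obstacle is precisely this second-variation bookkeeping}: one must differentiate the $J$-dependent endomorphism metric and the operator $\mathbb J$ simultaneously and check that everything assembles into the stated clean form. The advantage of routing through $D\sch=(JP)^*$ is that it bypasses a direct second variation of the Chern connection and its curvature, so no stray Nijenhuis terms appear.

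The applications follow by specialising the master formula. For \eqref{prop:second eak}, when $J$ is EAK the acceleration term and the correction $-\la u,v\Lie_\K J\ra$ vanish since $\Lie_\K J=0$, leaving $\Hess\mathcal C(u,v)=\la u,H(v)\ra$; the rewriting as $\la H(u),v\ra$ is self-adjointness of $H$, which holds because $2JP(JP)^*$ is manifestly self-adjoint and $J\Lie_\K$ is self-adjoint and semi-positive on $T_J\mathcal D$ by \lemref{jlk}. For \eqref{calabih} I would restrict $H$ to $T_J\mathcal D=\IM P\oplus\IM JP$. On $\IM JP$ the identity $(JP)^*(JP)=P^*P=\Li$ turns the leading term into $2\la\Li f_1,\Li f_2\ra$, while the intertwining $\Lie_\K\circ P=P\circ\Lie_\K$ (valid at EAK because $\Lie_\K J=0$ and $[\K,\g_\omega f]=\g_\omega(\Lie_\K f)$ by the Poisson structure) converts the $J\Lie_\K$ term into $-\tfrac12\la\Lie_\K f_1,\Lie_\K f_2\ra$; the final identity with $2\la\Li^+f_1,\Li^-f_2\ra$ is the factorisation defining the Calabi operators (\defref{cala12}), which also delivers semi-positivity. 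The vanishing on $\IM P$ is cleanest conceptually: $\mathcal C$ is $\Ham$-invariant, hence constant along each orbit, and the directions $P(f)$ are tangent to the orbit, so every derivative of $\mathcal C$ in these directions vanishes.

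Finally, \eqref{prop:second chsc} is the case of constant $\sch$, where $\K=0$ forces $\Lie_\K=0$, so $H(v)=2JP(JP)^*v$ and $\Hess\mathcal C(u,v)=2\la(JP)^*u,(JP)^*v\ra$, manifestly semi-positive with kernel $\ker(JP)^*$. For \eqref{prop:second geodesic} I would first record that the Levi-Civita connection of the metric \eqref{defj} gives the geodesic equation $J''=JJ'J'$, and then substitute $u=v=J'$ together with $\partial_{t_1}\partial_{t_2}J=J''=JJ'J'$ into the master formula. The two remaining terms $-\la JJ'J',J\Lie_\K J\ra$ and $-\la J',J'\Lie_\K J\ra$ both vanish by a parity argument: since $J'\in T_J\ac$ anticommutes with $J$, the composition $J'J'$ commutes with $J$, whereas $\Lie_\K J\in T_J\ac$ anticommutes with $J$, and the trace of the product of a $J$-commuting and a $J$-anticommuting endomorphism is zero. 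What survives is $\la H(J'),J'\ra$, as claimed.
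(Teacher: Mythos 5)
Your plan follows essentially the same route as the paper: differentiate the first variation $D\mathcal C(v)=-\la v,J\Lie_\K J\ra$ along the two-parameter family, use the linearisation of the Hermitian scalar curvature to produce the $2JP(JP)^*$ term, and collect the remaining Leibniz contributions into $-\la u,J\Lie_\K v\ra$ and $-\la u,v\Lie_\K J\ra$. One bookkeeping caution: with the conventions $P(f)=\tfrac12\Lie_{\g_\omega f}J$ and $\K=\g_\omega\sch$, Mohsen's formula gives $D\sch=-(JP)^*$ and hence $\g\mathcal C=-2JP(\sch)=-J\Lie_\K J$; your two assertions ``$\g\mathcal C=2JP(\sch)$'' and ``$2JP(\sch)=-J\Lie_\K J$'' cannot both hold, so a sign must be repaired (the master formula you end up stating is nonetheless the correct one). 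Your treatment of the geodesic case is correct and in fact slightly cleaner than the paper's: the parity argument (the trace of a product of a $J$-commuting and a $J$-anticommuting endomorphism vanishes) handles both $\la J'',J\Lie_\K J\ra=0$ and $\la J',J'\Lie_\K J\ra=0$ at once, whereas the paper uses a separate integration-by-parts identity for the second.

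There are three genuine gaps. First, semi-positivity of $\Hess\mathcal C$ on all of $T_J\mathcal D=\IM P+\IM JP$ requires the cross term $\Hess\mathcal C(P(f_1),JP(f_2))$ to vanish; your $\Ham$-invariance argument only gives the diagonal block $\Hess\mathcal C(P(f),P(f))=0$, and polarising that says nothing about mixed directions. The cross term does vanish, and your own ingredients already yield the stronger fact $H(P(f))=2JP(JP)^*P(f)-J\Lie_\K P(f)=JP(\Lie_\K f)-JP(\Lie_\K f)=0$, using $(JP)^*P=\tfrac12\Lie_\K$ and $\Lie_\K P=P\Lie_\K$ at an EAK point; this step must be made explicit. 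Second, the claim that $\Hess\mathcal C(JPf_1,JPf_2)=2\la\Li^+f_1,\Li^-f_2\ra$ ``delivers semi-positivity'' is not automatic: $\Li^+$ and $\Li^-$ are each semi-positive, but $\la\Li^+f,\Li^-f\ra\ge0$ needs them to commute, which holds precisely because $\Lie_\K$ commutes with $\Li$ at EAK; strict positivity on $\IM JP$ further needs $\ker(\Li^+\Li^-)=\ker\Li^++\ker\Li^-$ and the fact that a real function in $\ker\Li^{\pm}$ satisfies $JP(f)=0$. Third, the local-minimum assertions in items (2) and (3) do not follow from a semi-positive (indeed degenerate) Hessian at a critical point and are absent from your plan; for item (3) the paper argues independently by Cauchy--Schwarz, $\mathcal C(J)\ge(\underline{\sch})^2\int_M\vol$ with the average $\underline{\sch}$ a topological constant, so constant-$\sch$ metrics are in fact global minima of $\mathcal C$ on $\ac$.
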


%\begin{align}\label{ctang}
%T_J\mathcal D = \IM P+\IM J P.
%\end{align} 
By \eqref{calabih} in \thmref{prop:second main}, we can get a structure property of the tangent space of the complexified orbit.
\begin{cor}\label{cor1}
If $J$ is EAK, then $$\IM P\cap \IM JP=\{0\},\quad T_J\mathcal D_J=\IM P\oplus \IM JP.$$
\end{cor}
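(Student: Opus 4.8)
The plan is to read the corollary off directly from part \eqref{calabih} of \thmref{prop:second main}. First I would observe that the equality $T_J\mathcal D_J=\IM P+\IM JP$ is built into the definitions: the distribution is $D_J=\{P(f),JP(f):f\in C^\infty(M,\R)\}$, so as a linear subspace $D_J=\IM P+\IM JP$, and since $\mathcal D_J$ is by construction the integral submanifold of $D$ through $J$ we have $T_J\mathcal D_J=D_J$. Consequently the entire content of the corollary reduces to showing that this sum is \emph{direct}, that is, $\IM P\cap\IM JP=\{0\}$.

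To prove directness I would argue by contradiction using the sharp dichotomy for $\Hess\mathcal C$ recorded in \eqref{calabih}. Suppose $w\in\IM P\cap\IM JP$ is nonzero and write $w=P(f_1)=JP(f_2)$ for some $f_1,f_2\in C^\infty(M,\R)$. Because $J$ is EAK, the vanishing identity $\Hess\mathcal C(P(f_1),P(f_1))=0$ gives $\Hess\mathcal C(w,w)=0$. On the other hand, $w\in\IM JP$ is nonzero, so the strict positivity of $\Hess\mathcal C$ on $\IM JP$ forces $\Hess\mathcal C(w,w)>0$. These are incompatible, hence $w=0$, which yields $\IM P\cap\IM JP=\{0\}$ and therefore $T_J\mathcal D_J=\IM P\oplus\IM JP$.

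The only nontrivial input is the strict positivity of $\Hess\mathcal C$ on $\IM JP$, which comes from the identity $\Hess\mathcal C(JP(f),JP(f))=2\la\Li^+(f),\Li^-(f)\ra$ together with the self-adjointness and semi-positivity of the Calabi operators $\Li^{\pm}$ from \defref{cala12}. I expect the main point to be the passage from positivity of this quadratic form in $f$ to positive definiteness on the \emph{image} $\IM JP$: one must know that $\Hess\mathcal C(JP(f),JP(f))=0$ can only happen when $JP(f)=0$, i.e.\ that the degeneracy of the form is exactly $\ker JP$. Granting the strict positivity statement precisely as phrased in \thmref{prop:second main}, the contradiction above is immediate and no additional computation is required.
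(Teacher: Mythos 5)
Your proposal is correct and follows essentially the same route as the paper: the paper's proof also takes $v\in\IM P\cap\IM JP$, applies the vanishing of $\Hess\mathcal C$ on $\IM P$ (Corollary \ref{var21}) and the strict positivity on $\IM JP$ (Corollary \ref{hesscp} together with Lemma \ref{calabicom} and Proposition \ref{kercala}, which show the degeneracy is exactly $\ker JP$) to force $v=0$. Your identification of the one nontrivial input --- that $\Hess\mathcal C(JP(f),JP(f))=0$ implies $JP(f)=0$ --- matches precisely where the paper invests its effort.
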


\begin{cor}\label{invar}
On an almost K\"ahler manifold $(M,\omega, J)$,  
\begin{itemize}
\item
The Hermitian Calabi functional is invariant under the action of  $\Ham$.
\item If we restrict the Hermitian Calabi functional $\mathcal C$ to a complexified orbit $\mathcal D$, $J$ is a critical point of $\mathcal C$ iff it is a local minimum of $\mathcal C$.
\item
The space of EAK metrics in $\mathcal D_J$ is a submanifold whose each connected component is an orbit of the Hamiltonian group $\Ham$.
\end{itemize}
\end{cor}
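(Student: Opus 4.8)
The plan is to read the three assertions off the Hessian computation of \thmref{prop:second main} and the splitting $T_J\mathcal D_J=\IM P\oplus\IM JP$ of \corref{cor1}, treating them one at a time. For the invariance I would invoke naturality of the Hermitian scalar curvature: $\sch(J)$ is built from $\omega$ and $J$ through the canonical Hermitian (Chern) connection, hence is functorial under any diffeomorphism preserving $\omega$, so $\sch(\phi^\ast J)=\sch(J)\circ\phi$ for every $\phi\in\Ham$. As a symplectomorphism preserves $\vol=\omega^m/m!$, the change of variables $x\mapsto\phi(x)$ yields
\[
\mathcal C(\phi^\ast J)=\int_M\big(\sch(J)\circ\phi\big)^2\vol=\int_M\sch(J)^2\vol=\mathcal C(J).
\]
The only delicate point here is the functoriality of $\sch$, which follows from that of the Chern connection.

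For the second assertion only the implication ``critical $\Rightarrow$ local minimum'' needs proof, the reverse being automatic. The key is that the $L^2$-gradient of $\mathcal C$ is everywhere tangent to the complexified orbit. Indeed, the non-covariant term $-\la\frac{\partial^2J}{\partial t_1\partial t_2}\vert_{(0,0)},J\Lie_\K J\ra$ of the Hessian formula in \thmref{prop:second main} identifies $\g\mathcal C=-J\Lie_\K J$; since $\K=J\g\sch$ coincides up to sign with the Hamiltonian vector field $\g_\omega\sch$, we get $\Lie_\K J=\pm2P(\sch)\in\IM P$ and hence $\g\mathcal C=-J\Lie_\K J\in\IM JP\subset T_J\mathcal D$. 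Therefore $J$ is critical for $\mathcal C|_{\mathcal D}$, i.e. $\g\mathcal C\perp T_J\mathcal D$, if and only if $\g\mathcal C=0$, if and only if $\Lie_\K J=0$, that is, $J$ is EAK; and an EAK point is a local minimum of $\mathcal C|_{\mathcal D}$ by \eqref{calabih} of \thmref{prop:second main}. This gives the equivalence.

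For the third assertion the previous step shows that the set $E\subset\mathcal D_J$ of EAK metrics equals the critical locus of $\mathcal C|_{\mathcal D}$. As $\mathcal D_J$ and $\mathcal C$ are both $\Ham$-invariant, every orbit $\Ham\cdot J$ with $J\in E$ lies in $E$, with tangent space $T_J(\Ham\cdot J)=\IM P$ by the definition of $P$. I would then pin down the null space of the Hessian: by \eqref{calabih} the form $\Hess\mathcal C|_{\mathcal D}$ is semi-positive and vanishes on $\IM P$, so the Cauchy--Schwarz inequality for semi-definite forms puts $\IM P$ inside its null space, while strict positivity on $\IM JP$ together with $T_J\mathcal D=\IM P\oplus\IM JP$ (\corref{cor1}) forces the null space to be exactly $\IM P$. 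Thus $\mathcal C|_{\mathcal D}$ is Morse--Bott along $E$ with critical tangent space $\IM P=T_J(\Ham\cdot J)$, and an implicit-function/Morse--Bott argument exhibits $E$ as a submanifold inside which each orbit $\Ham\cdot J$ is open; being a union of such open orbits, every connected component of $E$ is a single $\Ham$-orbit. The step I expect to be the main obstacle is precisely this last one: promoting the formal Morse--Bott picture to a genuine submanifold statement requires elliptic estimates for the linearised EAK operator and the implicit function theorem in appropriate Sobolev (Banach) completions, whereas the algebraic identity $\ker\Hess\mathcal C|_{\mathcal D}=\IM P$ is the clean input supplied by \thmref{prop:second main} and \corref{cor1}.
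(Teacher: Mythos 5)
Your treatment of the first two items is sound. For the invariance you argue by naturality of $\sch$ under symplectomorphisms plus a change of variables, which is a genuinely different (and arguably cleaner) route than the paper's: the paper instead integrates the first-variation identity $D\mathcal C(P(f))=\la f,\Lie_{\K}\sch\ra=0$ along paths in the orbit and invokes path-connectedness of $\Ham$; your version requires knowing $\sch(\phi^{*}J)=\phi^{*}\sch(J)$, which does follow from functoriality of the Chern connection under $\omega$-preserving diffeomorphisms, while the paper's needs only the variational formula already in hand. For the second item, your observation that $\g\mathcal C=-J\Lie_{\K}J=-2JP(\sch)\in\IM JP\subset T_J\mathcal D$, so that criticality of $\mathcal C|_{\mathcal D}$ forces $\g\mathcal C=0$ and hence the EAK condition, is essentially the paper's computation $D\mathcal C(JP(f))=-2\la f,\Li(\sch)\ra$ in different clothing (and note $\K=\g_{\omega}\sch$ exactly, not merely up to sign).

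The third item is where your proposal has a genuine gap, and it is one the paper closes with a more elementary device than the Morse--Bott/implicit-function machinery you defer to. You correctly isolate $\ker\Hess\mathcal C|_{T_J\mathcal D}=\IM P$ (this is the paper's \lemref{kerdes}), but you then require elliptic estimates and an implicit function theorem in Sobolev completions to exhibit the EAK locus as a submanifold in which each orbit is open --- and you do not supply them. The paper avoids this entirely: given a path $J_t$ of EAK metrics inside a connected component, $\mathcal C(J_t)$ is constant because every $J_t$ is critical, so
\begin{align*}
0=\frac{d^2}{dt^2}\mathcal C(J_t)=\Hess\mathcal C(J_t',J_t'),
\end{align*}
whence $J_t'\in\IM P=T_{J_t}\mathcal O$ by \lemref{kerdes}; the path is therefore tangent to, hence contained in, the $\Ham$-orbit. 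Combined with the fact that $\mathcal O_{J_0}$ is connected and consists entirely of EAK metrics, this yields component $=$ orbit with no Fredholm or transversality input. Either carry out the analytic Morse--Bott step you flag as the obstacle, or replace it with this second-derivative-along-a-path argument.
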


\begin{rem}
Due to \eqref{prop:second eak} in \thmref{prop:second main}, it could be possible that the EAK metric becomes a saddle point of the Hermitian Calabi functional on $\ac$. While, it depends on the eigenvalues of the operators $2JP(JP)^*$ and $J\Lie_{\K}$. If so, it would be interesting to search the saddle point by applying the minimax approach
\begin{align*}
\max_{\tilde J\in \ac} \min_{J\in \mathcal D_{\tilde J}} \mathcal C(J).
\end{align*}
\end{rem}

\begin{rem}
When the background manifold is K\"ahler, a complexified orbit can be identified with a K\"ahler class via Moser's lemma. Thus \eqref{calabih} in \thmref{prop:second main} is a generalisation of Calabi's classical results \cite[Theorem 2]{MR780039}. That is the Hessian of the Calabi functional is semi-positive at an extremal K\"ahler metric. Our proof of \eqref{calabih} in \thmref{prop:second main} applies substantial results in Gauduchon's book \cite{gauduchon2010calabi}, where he used Mohsen formula to characterise the EAK condition, made a very detailed study of $\ac$ and the Hermitian Calabi functional. Our \thmref{prop:second main} follows his work to compute the Hessian of the Hermitian Calabi functional.
\end{rem}

\begin{rem}
In \cite{MR2249564}, Lijing Wang gave a different proof of Calabi's result \cite{MR780039} on the Hessian of the Calabi functional. His method is based on reductive group action admitting a moment map on a K\"ahler manifold. %\thmref{prop:second main} extends this results to the almost K\"ahler setting.
%However, his proof cannot apply to our case since he required the group is reductive holomorphic and the background manifold is K\"ahler. \
\end{rem}

\begin{rem}\label{salamonstruc}
If  the Hermitian scalar curvature metric $\sch_J$ of $J$ is constant, Garc\'ia-Prada and Salamon (\cite[Remark 2.10]{MR4234988}, \cite[Corollary 1.12]{MR4285699}) showed that  $\IM P\perp \IM JP$, so $  T_J\mathcal D_J=\IM P\oplus \IM JP$. \corref{cor1} is a generalisation of their result in the EAK case.
\end{rem}

\begin{rem}
Calabi \cite{MR780039} proved that the Hessian of Calabi functional is strictly positive along the directions transversal to the identity component of automorphism group $\aut_0(M,J)$ orbit. \corref{invar} is an analogue to Calabi's result, while the Hamiltonian group plays the role as automorphism group in the K\"ahler case.
\end{rem}

In Section \ref{Hermitian Calabi flow}, we study the Hemritian Calabi flow. 
 The Hermitian Calabi flow has appeared in the convergence problem of the Calabi flow \cite{MR2103718,MR3969453} and uniqueness of the adjacent constant scalar K\"ahler metrics \cite{MR3224716}.

 The Hermitian Calabi flow is the negative gradient flow of the Hermitian Calabi functional
$$
\frac{d}{dt}J = \frac{1}{2}J\Lie_\K J.
$$ 
Alternatively, we have $\frac{d}{dt}J =JP(\sch(J))$, which suggests that the Hermitian Calabi flow $J(t)$ would stay in the distribution $\mathcal D$ as long as it exists.

\begin{thm}
The Hermitian Calabi flow 
is a 4th order weakly parabolic system. For any $\xi\in T^*M$, the principal symbol of its linearisation is given by
$$
\hat\sigma_4(x,\xi)v  = \frac{1}{2}(v,\Xi)\Xi,
$$
where $\Xi\in T_J\ac$ is given by $\Xi = \xi^{\sharp}\otimes (J\xi)+ (J\xi^{\sharp})\otimes \xi$.
\end{thm}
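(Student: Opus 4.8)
The plan is to linearise the flow $\frac{d}{dt}J=JP(\sch(J))$, isolate its fourth-order part, and read off the principal symbol. First I would count orders: the Hermitian scalar curvature $\sch$ is second order in $J$, while $f\mapsto JP(f)=\tfrac12 J\Lie_{\g_{\omega}f}J$ is second order in $f$ (its top order coming from $\nabla(\g_{\omega}f)$ inside the Lie derivative), so $J\mapsto JP(\sch(J))$ is genuinely of order four. To linearise at $J$ in a direction $v\in T_J\ac$, I would split the derivative into the three contributions obtained by varying the outer factor $J$, the $J$-dependence of $P$, and the argument $\sch(J)$. The first is of order zero in $v$ (it merely multiplies $v$ by the fixed tensor $P(\sch(J))$), the second is at most first order in $v$ (it is $\tfrac12 J\Lie_{\g_{\omega}\sch}v$), and only the third, $JP\bigl(D_J\sch(v)\bigr)$, carries four derivatives of $v$. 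Hence the principal symbol of the linearisation is that of $JP\circ D_J\sch$.

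Next I would replace $D_J\sch$ by $(JP)^*$ at the level of principal symbols. The moment-map interpretation of $\sch$ gives, for every $f$ and every variation $v$, an identity of the form $\int_M D_J\sch(v)\,f\,\vol=\la v,JP(f)\ra$ up to the normalisation fixing $(JP)^*$, so $D_J\sch$ and $(JP)^*$ share the same second-order symbol; consequently the fourth-order part of the linearisation agrees, up to the sign that renders the flow forward-parabolic, with the leading term $2JP(JP)^*$ of the operator $H$ of \thmref{prop:second main}. It then remains to compute $\hat\sigma_2(JP)(\xi)$. Extracting from $JP(f)=\tfrac12 J\Lie_{\g_{\omega}f}J$ the terms with two derivatives of $f$ — these come only from $\nabla_{\bullet}(\g_{\omega}f)$ in $(\Lie_{\g_{\omega}f}J)(\,\cdot\,)=(\nabla_{\g_{\omega}f}J)(\,\cdot\,)-\nabla_{J\cdot}(\g_{\omega}f)+J\nabla_{\bullet}(\g_{\omega}f)$ — and using that the symbol of $\g_{\omega}f$ is $\pm J\xi^{\sharp}$, one finds after applying the outer $J$ and using $J^2=-1$ that $\hat\sigma_2(JP)(\xi)f=c\,f\,\Xi$ with $\Xi=\xi^{\sharp}\otimes(J\xi)+(J\xi^{\sharp})\otimes\xi$ and $|c|=\tfrac12$. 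A short check, using that $J$ is $g$-skew and $(J\xi)(Y)=-\xi(JY)$, confirms that $\Xi$ is $g$-symmetric and anti-commutes with $J$, so $\Xi\in T_J\ac$. Since the leading operator is the self-adjoint product $2JP(JP)^*$, the overall sign of $c$ is irrelevant: taking adjoints gives $\hat\sigma_2((JP)^*)(\xi)v=c\,(v,\Xi)$, and composing,
$$
\hat\sigma_4(x,\xi)v=2\,\hat\sigma_2(JP)(\xi)\,\hat\sigma_2((JP)^*)(\xi)\,v=2c^2\,(v,\Xi)\,\Xi=\tfrac12\,(v,\Xi)\,\Xi.
$$

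Finally, weak parabolicity is read off this symbol: it is self-adjoint and positive semi-definite, since $\la\hat\sigma_4(x,\xi)v,v\ra=\tfrac12\,(v,\Xi)^2\ge0$ for every $v$ and every $\xi\neq0$, so the linearised operator is degenerately elliptic and the system is parabolic. It is only weakly parabolic because $\hat\sigma_4(x,\xi)$ has rank one — it annihilates the $g$-orthogonal complement of $\Xi$ — which reflects both the tangency of the flow to the distribution $D$ and its invariance under $\Ham$. The main obstacle is precisely the symbol computation of $JP$: one must keep careful track of the symplectic gradient $\g_{\omega}f$, the Lie derivative, and the induced $J$-action on covectors, and pin down the normalisation so that the $\nabla^2 f$ terms assemble exactly into $c\,f\,\Xi$ with $|c|=\tfrac12$ and the stated $\Xi$. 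Once this is in hand, the identification $D_J\sch\sim(JP)^*$, the adjoint, the composition, and the semi-positivity are all immediate.
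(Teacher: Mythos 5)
Your proposal is correct and follows essentially the same route as the paper: isolate the fourth-order part of the linearisation as $JP\circ D_J\sch=-JP(JP)^*$ (the paper does this via its Lemma on $\widetilde H$ and the identity $D\K(v)=-\g_{\omega}(JP)^*v$), compute the second-order symbol of $JP$ from the $\nabla^2 f$ terms in $\Lie_{\g_{\omega}f}J$, pair with its adjoint to get $\tfrac12(v,\Xi)\Xi$, and read off weak parabolicity from the rank-one semi-positive symbol (the paper exhibits the degenerate direction $v=J\Xi$). The only cosmetic difference is that you obtain $\hat\sigma_2((JP)^*)$ by duality from $\hat\sigma_2(JP)$, whereas the paper computes $(JP)^*v=\delta J(\delta Jv)^{\flat}$ directly in local coordinates; your observation that the sign of $c$ is immaterial because only $c^2$ enters the composition is sound and consistent with the paper's conclusion.
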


In Appendix \ref{sect}, we compute an explicit expression of the Lichnerowicz operator $\Li$, which appears in \eqref{calabih} in \thmref{prop:second main}.
\begin{thm}\label{5}
On an almost K\"ahler manifold $(M,\omega, J)$, the Lichnerowicz operator has the following explicit expression:
\begin{align}\label{4}
\Li(f)=&\frac{1}{2}\Delta^2f-2(\delta\ric^+, df)+2(\rho,dd^c f)+\delta\delta(\D^{+}df-\D^{-}df)
\end{align}
for all $f\in C^{\infty}(M,\mathbb R)$. Where $\delta$ is the formal adjoint of Levi-Civita connection $\D$ and  $\D^{\pm}\alpha$ is the $J$-invariant(resp J-anti-invariant) part of $\D\alpha$, i.e.
\begin{align*}
\D^{\pm}\alpha(X,Y)=\frac{1}{2}[(\D_X\alpha) (Y)\pm (\D_{JX}\alpha)(JX)],
\end{align*}
$\ric^+$ is the $J$-invariant part of Ricci curvature and $\rho(X,Y)=\ric^+(JX,Y)$.
\end{thm}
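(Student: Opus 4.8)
The plan is to compute the Lichnerowicz operator directly from its definition $\Li=P^*P$ given in \defref{myl1}, by first producing explicit differential-operator expressions for $P$ and for its formal adjoint $P^*$, and then composing and simplifying with the almost-Kähler Bochner and Ricci-commutation identities. Throughout I would follow the conventions and the detailed study of $\ac$ in Gauduchon's book \cite{gauduchon2010calabi}.

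First I would make $P(f)$ explicit. Writing the Hamiltonian vector field as $\g_\omega f = J\g f$ (with the convention $\iota_{\g_\omega f}\omega = df$) and using the pointwise identity $\mathcal{L}_X J = \D_X J - [\D X, J]$, where $\D X$ denotes the endomorphism $Y\mapsto \D_Y X$, I would substitute $\D_Y(\g_\omega f) = (\D_Y J)\g f + J\,\D_Y \g f$ and note $\D\g f = (\Hess f)^\sharp = (\D df)^\sharp$. A short computation of the commutator shows that the second-order part of $-[\D(\g_\omega f),J]$ is $-2$ times the $J$-anti-invariant part of $(\Hess f)^\sharp$, so that $P(f)=\tfrac12\mathcal{L}_{\g_\omega f}J$ has leading term $-(\D^- df)^\sharp$ together with a first/zeroth-order torsion part built from $\D J$. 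Since $T_J\ac$ consists of $J$-anti-invariant symmetric endomorphisms, only the anti-invariant components survive, which identifies the principal part of $P$ with $\D^- df$.

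Next I would compute $P^*$ with respect to the $L^2$ pairings, namely $\int_M f h\,\vol$ on functions and $\la\cdot,\cdot\ra$ of \eqref{defj} on $T_J\ac$. Integrating $\la P(f),A\ra=\int_M g(P(f),A)\,\vol$ by parts twice moves both derivatives off $f$, producing the double divergence $\delta\delta$ acting on the anti-invariant projection of $A$, plus lower-order contributions carrying factors of $\D J$ and of the curvature. This exhibits $P^*$ as a second-order operator whose principal part is $\delta\delta$ restricted to anti-invariant tensors.

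Finally I would compose $\Li=P^*P$. The principal composition $\delta\delta\circ(\D df)$ yields $\tfrac12\Delta^2 f$ once the two anti-invariant projections are accounted for (this is the source of the factor $\tfrac12$), while commuting the covariant derivatives in $\delta\delta\,\D df$ produces curvature contractions: the Ricci commutators assemble into $-2(\delta\ric^+, df)$, and the interaction between the $J$ appearing in the anti-invariant projection and the curvature produces the Ricci-form term $2(\rho,dd^c f)$; the remaining torsion contributions collect into $\delta\delta(\D^+ df-\D^- df)$. I expect the main obstacle to be precisely this last bookkeeping step. One must track the covariant derivatives $\D J$ (which vanish in the integrable case but not here), repeatedly commute second covariant derivatives via the Ricci identity, and decompose every intermediate tensor into its $J$-invariant and $J$-anti-invariant parts. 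Organizing the almost-Kähler torsion identities, those forced by $d\omega=0$ together with the algebra of $\D J$, so that all non-curvature torsion terms collapse into the single expression $\delta\delta(\D^+ df-\D^- df)$ is the technically delicate part; the curvature terms then drop out of the standard Weitzenböck manipulation.
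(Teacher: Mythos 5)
Your overall strategy — make $P$ and $P^*$ explicit and compose — is the same as the paper's, and your symbol-level observations are correct: the leading term of $P(f)$ is indeed the $J$-anti-invariant part of the Hessian, $P^*$ is indeed a double divergence (cf.\ \lemref{pstar}, which gives $P^*v=\delta J(\delta Jv)^{\flat}$), and the factor $\tfrac12$ in $\tfrac12\Delta^2 f$ does come from the anti-invariant projection. But the statement being proved is precisely the identification of the sub-principal terms, and your proposal defers exactly that to ``bookkeeping'' that you acknowledge you have not carried out. As written, the step ``the remaining torsion contributions collect into $\delta\delta(\D^{+}df-\D^{-}df)$'' assumes the conclusion rather than deriving it, so there is a genuine gap.

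Two concrete ingredients are missing. First, you need a device that separates the Riemannian part of the computation from the torsion part; the paper uses the identity $J\Lie_{\g_{\omega}f}J=-\Lie_{\g f}J-4N(\g_{\omega}f,\cdot)$, which reduces the first piece to the variation of the Levi-Civita connection under the gradient flow of $f$ (\lemref{gradvar}) and isolates all the non-integrability in a single Nijenhuis term. Second, and more seriously, your claim that ``the curvature terms then drop out of the standard Weitzenb\"ock manipulation'' is not accurate in the almost K\"ahler setting: the Weitzenb\"ock/Ricci commutations applied to $\delta\delta\D^{-}df$ produce the star-Ricci form $\rho^*$ (this is the content of \lemref{bas1}), and $\rho^*$ does \emph{not} cancel within that piece alone — it cancels only against an equal and opposite $\rho^*$ contribution coming from $\delta N(\g_{\omega}f,\cdot)$ (\lemref{l2}), which in turn forces the residual Ricci terms to combine into the $J$-invariant part $\ric^+$ and hence into $-2(\delta\ric^+,df)+2(\rho,dd^cf)$. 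Without exhibiting this cancellation, nothing in your argument distinguishes $\ric$ from $\ric^+$ or rules out a leftover $\rho^*$ term, so the stated formula is not established. To complete the proof you would need to carry out the two computations corresponding to \lemref{l1} and \lemref{l2} (or an equivalent organization) and verify the cancellation explicitly.
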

\begin{rem}Our result \eqref{4} in \thmref{5} is a continued calculation of Vernier's formula \cite[Equation (11)]{MR4174303}, where the Lichnerowicz operator is a 4th order elliptic operator plus an error term. We write the error term in an explicit way.
\end{rem}

\begin{rem}
On K\"ahler manifolds, we have $$\ric=\ric^+,\quad \delta(\D^{+}df-\D^{-}df)(X)=\ric(\g f, X).$$ So the formula in \eqref{4} becomes
$$
\Li(f)=\frac{1}{2}\Delta^2f-(\delta\ric^+, df)+(\rho,dd^c f)=\frac{1}{2}\Delta^2f+\frac{1}{2}( ds, df)+(\rho, dd^c f),
$$ 
which is exactly Gauduchon's real Lichnerowicz operator \cite[Lemma 1.23.5]{gauduchon2010calabi}, and twice of the real part of Calabi's Lichnerowicz operator \cite[Proof of Theorem 2]{MR780039}, see also \cite{MR3010175,gauduchon2010calabi}. 
\end{rem}

\subsection*{Acknowledgements}
%Jie He thanks Kai Zheng for conversations and helpful suggestions, he also 
Jie He would like to thank Youde Wang for his help and support.
K. Zheng is partially supported by NSFC grant No. 12171365.

\section{Preliminaries}
In this section, we will introduce the basic materials in almost K\"ahler geometry. From now on, $(M,\omega)$ is always a compact symplectic manifold. Let $J$ be an $\omega$-compatible almost complex structure and $g$ be the Riemannian metric determined by (\ref{compatibility}).

We choose a local orthogonal frame of $TM$ as
\begin{align}\label{basis}
	\{e_1,e_2,\ldots,e_m,Je_1=e_{m+1},Je_2=e_{m+2},\ldots,Je_m=e_n\}
\end{align}with the dual frame 
\begin{align*}
	\{e_1^*,e_2^*,\ldots,e_m^*,Je_1^*=e_{m+1}^*,Je_2^*=e_{m+2}^*,\ldots,Je_m^*=e_n^*\}
\end{align*}
such that \begin{align*}
	\omega=\frac{1}{2}\sum_{i=1}^ne_i^*\wedge Je_i^*.
\end{align*}

The almost complex structure is extended to any $p$-form $\psi\in\Omega^p(M)$ as
\begin{align*}
J\psi(X_1, \ldots, X_p) := \psi(J^{-1}X_1, \ldots, J^{-1}X_p),
\end{align*}
for all $X_1, \ldots, X_p\in\Gamma(TM)$, where $J^{-1}:=-J$. We define $Jf:=f$ if $f$ is a  function.

The twisted differential operator $d^c$ and the twisted codifferential operator $\delta^c$ are defined by changing the differential operator $d$ and the codifferential operator $\delta$ under the extended almost complex structure $J$
\begin{align*}
d^c:=J  d  J^{-1},\quad \delta^c:=J  \delta  J^{-1}.
\end{align*}

We denote by $\la,\ra$ the $L^2$ inner product of any tensor of type $(p,q)$ over $M$, and by $(, )$ the inner product on fibre induced by $g$, i.e.
$$
\la S,T\ra=\int_M(S,T)\vol=\int_Mg(S,T)\vol,\quad \forall S,T\in (TM)^{\otimes p}\otimes (T^*M)^{\otimes q}.
$$

%Denote by $\D$ the Levi-Civita connection of $g$ and $\D^*$ its formal adjoint.
%For any $(p,q)$-tensor $T$ with $q\geq 1$, $\D^* T$ is a $(p,q-1)$ tensor and (see \cite[1.55]{MR2371700}), 
%\begin{align}\label{delta}
%\D^\ast T=-\tr(\D T)=- \sum_i^n e_i\lrcorner \D_{e_i}T.
%\end{align}
%But for any $p$-form $\psi$(see \cite[1.56]{MR2371700}),we also have
%\begin{align}
%\delta\psi := (-1)^{np+n+1}*d*\psi=- \sum_i^n e_i\lrcorner \D_{e_i}\psi=\D^\ast \psi.
%\end{align}
%By an abuse of notation, we use $\delta$ to denote both the codifferential operator and the formal adjoint $\D^*$ of the Levi-Civita derivative $\D$ since they have the same expression. 
%\begin{align}\label{delta-def}
%\la \delta T,S\ra=\la T,\D S\ra, \quad \text{for any $(p,q)$-tensor $T$ and $(p,q-1)$-tensor $S$}
%\end{align}

Denote by $\D$ the Levi-Civita connection of $g$ and $\delta$ its formal adjoint. We have (see \cite[1.55]{MR2371700}), 
\begin{align}\label{delta}
\delta T=-\tr(\D T)=- \sum_i^n e_i\lrcorner \D_{e_i}T.
\end{align}
For any $(p,q)$-tensor $T$ and $(p,q-1)$-tensor $S$ with $q\geq1$, the adjointness implies
\begin{align}\label{delta-def}
\la \delta T,S\ra=\la T,\D S\ra.
\end{align}

For any $J\in\ac$, the Nijenhuis tensor $N$ of $J$ satisfies the formula
$$
N(X,Y)=\frac{1}{4}([JX,JY]-J[JX,Y]-J[X,JY]-[X,Y]).
$$
By its very definition, we have \begin{align}\label{jantiinva}
N(JX,Y)=N(X,JY)=-JN(X,Y).
\end{align}

The symplectic condition $d\omega=0$ yields a relation between $\D J$ and $N$.
\begin{lem}[{\cite[Lemma 9.3.1]{gauduchon2010calabi},\cite[Proposition 4.2]{MR1393941}}]\label{djandn}
On any almost K\"ahler manifold $(M,\omega, J)$, it holds
\begin{align}\label{29}
 ((\D_XJ)Y,Z)=2(JX,N(Y,Z)), \quad \forall X,Y,Z\in TM.
\end{align}
\end{lem}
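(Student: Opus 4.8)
The plan is to rewrite \eqref{29} as an identity for the covariant derivative of the K\"ahler form and then feed in the closedness $d\omega=0$. Throughout set $\Phi(X,Y,Z):=(\D_X\omega)(Y,Z)$. Since $\omega(U,V)=g(JU,V)$ by \eqref{compatibility} and $\D g=0$, a one-line computation gives $((\D_XJ)Y,Z)=\Phi(X,Y,Z)$, so the left-hand side of \eqref{29} becomes $\Phi$. Differentiating $J^2=-\id$ and $g(JU,V)=-g(U,JV)$ shows that $(\D_XJ)$ is skew-adjoint and anticommutes with $J$; hence, for each fixed $X$, the $2$-form $\D_X\omega$ is anti-$J$-invariant and $\Phi$ enjoys the pointwise symmetries
\[
\Phi(X,Y,Z)=-\Phi(X,Z,Y),\qquad \Phi(X,JY,JZ)=-\Phi(X,Y,Z),\qquad \Phi(X,JY,Z)=\Phi(X,Y,JZ).
\]
At this stage the claim has become the purely tensorial statement $\Phi(X,Y,Z)=2(JX,N(Y,Z))$.

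Next I would extract the symplectic information. Because $\D$ is torsion-free, $d\omega$ is the cyclic sum of $\D\omega$, so $d\omega=0$ is exactly the cyclic relation $\Phi(X,Y,Z)+\Phi(Y,Z,X)+\Phi(Z,X,Y)=0$. Writing this relation for the three rotated triples $(JX,JY,Z)$, $(JX,Y,JZ)$, $(X,JY,JZ)$ and reducing each with the symmetries above produces a $3\times 3$ linear system in the quantities $\Phi(JX,Y,JZ)$, $\Phi(JY,Z,JX)$, $\Phi(JZ,JX,Y)$; solving it and invoking the untwisted cyclic relation once more yields the key direction-swapping identity
\[
\Phi(JX,Y,JZ)=-\Phi(X,Y,Z),\qquad\text{equivalently}\qquad \D_{JX}J=-J\,\D_XJ.
\]

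Finally I would express $N$ through $\D J$. Replacing each Lie bracket in the Nijenhuis tensor by $\D_AB-\D_BA$ and using $\D_A(JB)=(\D_AJ)B+J\D_AB$, the purely first-order terms cancel and one is left with $4N(Y,Z)=(\D_{JY}J)Z-(\D_{JZ}J)Y+J(\D_ZJ)Y-J(\D_YJ)Z$. Pairing with $JX$, converting each summand into a value of $\Phi$ via $g(JX,JW)=g(X,W)$, and collapsing with the symmetries gives $4(JX,N(Y,Z))=-2\Phi(JX,Y,JZ)$; the key identity then turns the right-hand side into $2\Phi(X,Y,Z)$, which is \eqref{29}. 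I expect the main obstacle to be precisely the middle step: the direction-swapping identity is the genuine content of $d\omega=0$ and is invisible to the pointwise symmetries alone, so one must invoke closedness on several $J$-rotated triples and track signs carefully when solving the resulting system; the bracket-to-connection expansion and the final contraction are then routine.
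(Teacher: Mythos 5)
Your argument is correct, and it is worth noting that the paper itself gives no proof of this lemma: it is quoted from Gauduchon's book and from Apostolov--Dr\u{a}ghici, so there is nothing internal to compare against line by line. Your route differs from the standard one in the cited sources, which establishes the general almost-Hermitian identity $2((\D_XJ)Y,Z)=d\omega(X,Y,Z)-d\omega(X,JY,JZ)+4(JX,N(Y,Z))$ and then sets $d\omega=0$; instead you first extract the identity $\D_{JX}J=(\D_XJ)J$ (which is precisely \eqref{23}, deduced in the paper as a \emph{corollary} of this lemma) directly from the cyclic relations, and then expand $N$ via the torsion-free connection. I checked the linear algebra: writing $C(U,V,W)$ for the cyclic relation, the three twisted instances $C(JX,JY,Z)$, $C(JX,Y,JZ)$, $C(X,JY,JZ)$ reduce, via the pointwise symmetries, to
\begin{align*}
\Phi(JX,Y,JZ)+\Phi(JY,Z,JX)&=\Phi(Z,X,Y),\\
\Phi(JX,Y,JZ)+\Phi(JZ,X,JY)&=\Phi(Y,Z,X),\\
\Phi(JY,Z,JX)+\Phi(JZ,X,JY)&=\Phi(X,Y,Z),
\end{align*}
and together with $C(X,Y,Z)$ these do yield $\Phi(JX,Y,JZ)=-\Phi(X,Y,Z)$; your expansion $4N(Y,Z)=(\D_{JY}J)Z-(\D_{JZ}J)Y+J(\D_ZJ)Y-J(\D_YJ)Z$ is also exact, and pairing with $JX$ gives $4(JX,N(Y,Z))=\Phi(JY,Z,JX)+\Phi(JZ,X,JY)-\Phi(Y,Z,X)-\Phi(Z,X,Y)$, which the displayed relations turn into $2\Phi(X,Y,Z)$. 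The only imprecision is in your last sentence: the identification $4(JX,N(Y,Z))=-2\Phi(JX,Y,JZ)$ does \emph{not} follow from the pointwise symmetries alone (it fails on general almost Hermitian manifolds), but requires the first two twisted cyclic relations above; since you have already derived those, this is a bookkeeping slip rather than a gap. Your approach is slightly longer than the textbook one but has the advantage of producing \eqref{23} as an independent byproduct rather than as a consequence of the lemma.
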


A direction consequence of (\ref{jantiinva}) and (\ref{29})  is the following corollary.
\begin{cor}
Let $(M,\omega, J)$ be an almost K\"ahler manifold. We have
\begin{align}\label{23}
\D_{JX}J=(\D_XJ)J,\quad \forall X\in TM.
\end{align}
\end{cor}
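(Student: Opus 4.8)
The plan is to verify this pointwise identity of endomorphisms by pairing both sides against arbitrary tangent vectors and then reducing each side to the Nijenhuis tensor, where \eqref{29} and \eqref{jantiinva} combine directly. Since $\D_{JX}J$ and $(\D_X J)J$ are endomorphisms of $TM$ and the fibre metric $(\cdot,\cdot)$ is nondegenerate, it suffices to check that $((\D_{JX}J)Y,Z)=((\D_X J)(JY),Z)$ holds for all $Y,Z\in TM$.

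First I would handle the left side by applying \lemref{djandn}, i.e. \eqref{29}, with the base direction $JX$ in place of $X$, which yields $((\D_{JX}J)Y,Z)=2(J(JX),N(Y,Z))=-2(X,N(Y,Z))$ after using $J^2=-\id$. Next I would treat the right side by applying \eqref{29} with $JY$ fed into the first slot, giving $((\D_X J)(JY),Z)=2(JX,N(JY,Z))$; the anti-invariance \eqref{jantiinva} then rewrites $N(JY,Z)=-JN(Y,Z)$, so this equals $-2(JX,JN(Y,Z))$. The concluding step is to invoke that $J$ is $g$-orthogonal, a consequence of the compatibility condition \eqref{compatibility}, so that $(JX,JN(Y,Z))=(X,N(Y,Z))$ and the right side collapses to $-2(X,N(Y,Z))$, matching the left.

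There is no genuine obstacle here: the statement is a short formal consequence of \eqref{29} and \eqref{jantiinva}. The only places demanding attention are the correct bookkeeping of where the factor of $J$ is inserted (the base vector $JX$ on one side versus the argument $JY$ on the other) and the two elementary facts $J^2=-\id$ and the $g$-orthogonality of $J$; getting the signs right is essentially the entire content of the verification.
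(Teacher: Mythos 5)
Your proposal is correct and follows essentially the same route as the paper: both sides are paired against arbitrary $Y,Z$, reduced via \eqref{29} and \eqref{jantiinva} to a common expression involving the Nijenhuis tensor, using $J^2=-\id$ and the $g$-orthogonality of $J$. The only cosmetic difference is that you normalise both sides to $-2(X,N(Y,Z))$ while the paper stops at $-2(JX,JN(Y,Z))$; the content is identical.
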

\begin{proof}
It follows from compatible condition and (\ref{29}) that
\begin{align*}
((\D_{JX}J)Y,Z)=2(JJX,N(Y,Z))=-2(JX,JN(Y,Z)).
\end{align*}
It follows from (\ref{jantiinva}) and (\ref{29}) that
\begin{align*}
((\D_XJ)JY,Z)=2(JX,N(JY,Z))=-2(JX,JN(Y,Z)).
\end{align*}
Thus these two identities establish the relation \eqref{23}.
\end{proof}

\begin{cor}\label{identity}
Let $(M,\omega, J)$ be an almost K\"ahler manifold. It holds
$\delta J = 0$, where $\delta$ is the formal adjoint of $\D^*$ as we defined in (\ref{delta}).
\end{cor}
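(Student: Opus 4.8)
The plan is to compute $\delta J$ directly from its definition \eqref{delta} and then exploit the pointwise identity \eqref{23} to force a termwise cancellation. Since $J$ is a $(1,1)$-tensor, i.e.\ an endomorphism of $TM$, the interior-product formula \eqref{delta} gives the vector field
$$
\delta J = -\tr(\D J) = -\sum_{i=1}^{n} (\D_{e_i}J)(e_i),
$$
where the contraction is taken in the adapted orthonormal frame \eqref{basis}. Thus the whole statement reduces to verifying that $\sum_i (\D_{e_i}J)(e_i)=0$ at every point.

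First I would split the sum according to the frame \eqref{basis}, separating the $e_i$-contributions from the $Je_i$-contributions (recall $n=2m$):
$$
\delta J = -\sum_{i=1}^{m} (\D_{e_i}J)(e_i) - \sum_{i=1}^{m} (\D_{Je_i}J)(Je_i).
$$
Next I would rewrite the second sum using the corollary \eqref{23}: since $\D_{Je_i}J = (\D_{e_i}J)J$ and $J(Je_i) = J^2 e_i = -e_i$, each term becomes
$$
(\D_{Je_i}J)(Je_i) = (\D_{e_i}J)\bigl(J(Je_i)\bigr) = -(\D_{e_i}J)(e_i).
$$
Substituting this back, the two sums cancel term by term, and we conclude $\delta J = 0$.

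The argument is short, and the only care required is bookkeeping: being consistent about which slot $\delta$ contracts and tracking the sign produced by $J^2=-1$. I do not expect any genuine obstacle, as all the substance is carried by the previously established relation \eqref{23}, itself a consequence of the Nijenhuis identities \eqref{jantiinva} and \lemref{djandn}. As an alternative route one could instead pair against an arbitrary $Z$ and use \eqref{29} to write $((\D_{e_i}J)e_i,Z) = 2(Je_i, N(e_i,Z))$, then show $\sum_i (Je_i, N(e_i,Z)) = 0$ from the anti-invariance \eqref{jantiinva} and the antisymmetry of $N$; but the frame-splitting computation above is the most economical and avoids any further manipulation of the Nijenhuis tensor.
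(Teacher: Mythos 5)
Your proof is correct and uses the same key ingredient as the paper, namely the identity \eqref{23} applied in the adapted frame \eqref{basis}; the paper merely packages the bookkeeping differently, writing the full trace as $\sum_i(\D_{Je_i}J)(Je_i)$ (by frame invariance) and deducing $\delta J=-\delta J$, whereas you split the sum into the $e_i$- and $Je_i$-halves and cancel term by term. Both computations are equivalent and equally valid.
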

\begin{proof}
It follows from  (\ref{23}) that
\begin{align*}
\delta J = -\sum_{i=1}^n(\D_{e_i}J)e_i=-\sum_{i=1}^n(\D_{Je_i}J)Je_i=\sum_{i=1}^n(\D_{e_i}J)e_i=-\delta J.
\end{align*}
Consequently, $\delta J=0$.
\end{proof}

\begin{lem}\label{lem36}
On an almost K\"ahler manifold $(M,\omega, J)$, it holds
\begin{align*}
\delta  d^cf=0, \quad \forall f\in C^{\infty}(M).
\end{align*}
\end{lem}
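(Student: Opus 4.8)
The plan is to reduce the statement to a short covariant-derivative computation that exploits the structural fact just established, namely $\delta J=0$ (\corref{identity}). First I would unwind the definition of $d^c$ on a function: since $f$ is a function we have $J^{-1}f=f$, so $d^cf=JdJ^{-1}f=J(df)$ is simply the twist of $df$. Reading off the extended action of $J$ on $1$-forms gives $(d^cf)(Y)=df(J^{-1}Y)=-df(JY)$, so that $d^cf=-df\circ J$; equivalently $d^cf$ is the $1$-form dual to the vector field $J\g f$. This is the only place where the conventions for $J$ on forms enter.

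Next I would expand $\delta d^cf$ using the defining formula \eqref{delta}, so that $\delta d^cf=-\sum_i(\D_{e_i}d^cf)(e_i)$, and apply the Leibniz rule to $d^cf=-df\circ J$, differentiating the two factors $df$ and $J$ separately. This produces exactly two terms:
\begin{align*}
\delta d^cf=\sum_i(\D_{e_i}df)(Je_i)+df\Big(\sum_i(\D_{e_i}J)e_i\Big).
\end{align*}
The first term is the contraction of the Hessian $\Hess f=\D df$, a symmetric $2$-tensor, against the frame pairing $(e_i,Je_i)$; since $J^2=-1$ and $J$ is $g$-skew, splitting the sum over the adapted frame \eqref{basis} into the $e_a$ and $Je_a$ halves and using the symmetry of $\Hess f$ makes the two halves cancel, so this term vanishes. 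For the second term I would recall that $\sum_i(\D_{e_i}J)e_i=-\delta J$ by \eqref{delta}, which is zero by \corref{identity}; hence this term is $df(0)=0$ as well, and the lemma follows.

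I do not anticipate a serious obstacle here, as the whole argument is two lines of frame computation. The only points requiring care are the sign bookkeeping in the twist $d^c=JdJ^{-1}$ acting on a function (making sure it collapses cleanly to $J(df)$), and confirming the precise sign in the identity $\sum_i(\D_{e_i}J)e_i=-\delta J$ so that \corref{identity} can be invoked directly. An alternative, more conceptual route would be to observe that $J\g f=-\g_{\omega}f$ is, up to sign, the Hamiltonian vector field of $f$, whence $\delta d^cf=-\di(J\g f)=0$ because the Hamiltonian flow preserves $\vol=\omega^m/m!$; but the frame computation above is self-contained and stays entirely within the tools already developed in this section.
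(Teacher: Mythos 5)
Your proof is correct and follows essentially the same route as the paper: unwind $d^cf(Y)=-df(JY)$, expand $\delta d^cf$ via the Leibniz rule, kill the $df\bigl(\sum_i(\D_{e_i}J)e_i\bigr)$ term using $\delta J=0$ from \corref{identity}, and kill the remaining term $\sum_i\Hess f(e_i,Je_i)$ by the symmetry of the Hessian against the skewness of $J$. The signs all check out, and your parenthetical alternative (that $(d^cf)^\sharp=\g_\omega f$ is divergence-free because Hamiltonian flows preserve $\vol$) is also a valid shortcut, though not the one the paper takes.
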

\begin{proof}
With the help of the definition of $d^c$, we see $d^cf$ is a $1$-form. Direct computation shows
\begin{align}
\label{23 new}
\begin{split}
\delta  d^cf=&-\sum_{i=1}^n(\D_{e_i}d^cf)(e_i)=\sum_{i=1}^n-\D_{e_i}(d^cf(e_i))+d^cf(\D_{e_i}e_i)\\
=&\sum_{i=1}^n\D_{e_i}(df(Je_i))-df(\sum_{i=1}^nJ\D_{e_i}e_i).
\end{split}
\end{align}
It follows from \corref{identity} that
$$
\delta J = -\sum_{i=1}^n(\D_{e_i}J)(e_i)=-\sum_{i=1}^n\D_{e_i}(Je_i)+\sum_{i=1}^nJ\D_{e_i}e_i=0,
$$
i.e.
\begin{align}\label{24}
\sum_{i=1}^n\D_{e_i}(Je_i)=\sum_{i=1}^nJ\D_{e_i}e_i.
\end{align}
Substituting (\ref{24}) into (\ref{23 new}) yields
\begin{align*}
\delta  d^cf=\sum_{i=1}^n(\D_{e_i}(df(Je_i))-df(\D_{e_i}(Je_i))=\sum_{i=1}^n\Hess f(e_i, Je_i).
\end{align*}
Denote $\Hess f(e_i,e_j)=\Hess_{ij}, Je_i=J^j_ie_j$. Then we have $\Hess_{ij}=\Hess_{ji}, J_i^j=-J^i_j$. So we get
$$
\sum_{i=1}^n\Hess f(e_i, Je_i)=\sum_{i,j}\Hess_{ij}J^j_i=-\sum_{i,j}\Hess_{ji}J^i_j=-\sum_{i=1}^n\Hess f(e_i, Je_i),
$$
i.e., $\delta  d^cf=\sum_{i=1}^n\Hess f(e_i, Je_i)=0$.
\end{proof}

\subsection{Hamitonian group}
Denote $\Ham$ the symplectic Hamiltonian group of $(M,\omega)$, and $\ham$ the corresponding Lie algebra, then
$$
\ham = \{X\in \Gamma(TM):\iota_X\omega\text{ is exact}\}.
$$
Writing $\iota_{X}\omega=-df$, the function $f$ is called the momentum of $X$ regarding to $\omega$, and $$X=\g_{\omega}f:=J\g f,$$ where $\g f$ is the Riemannian gradient of $f$. We call $\g_{\omega}f$ the \emph{symplectic gradient} of $f$.

A Hamiltonian vector field has many momentums which may differ by a constant. If we convent that the integral of the momenta function is 0, then it is unique. Under the corresponding $$X=\g_{\omega}f\to f,$$  $\ham$ is identified to the set of  smooth function with zero average which is denoted by $\ham_{\omega}$,
$$
\ham_{\omega} = \{f\in C^{\infty}(M,\R):\int_Mf\omega^m=0\}:=C_0^{\infty}(M).
$$
This identification is in fact a Lie algebra identification: if we define the Poisson bracket over $C^{\infty}_0(M,\mathbb R)$
\begin{align}
\{f, g\}=\omega(\g_{\omega}f, \g_{\omega}g) = \g_{\omega}f(g)=-\g_{\omega}g (f),
\end{align}
then 
$$
[\g_{\omega}f, \g_{\omega}g]=\g_{\omega}\{f, g\}\to \{f, g\}.
$$
where $[, ]$ is the Lie bracket of vector fields.

\begin{defn}
On an almost K\"ahler manifold $(M,\omega, J)$, we say a real vector field $X$ is holomorphic if $$\Lie_XJ=0.$$
\end{defn}
The real holomorphic vector fields constitutes a Lie subalgebra under the Lie bracket of vector fields.
%%%%%%%%%%%%%%%%%%%%%%%%%%%%%%%%%%%%%%%%%%%%%%%%%%%%%%%%%%%%%%%%%%%%%%%%%%%%%%%%%%%%%%%%%%%%%%%%%%%%%%%%%%%%%%%%%%%%%%%%%%%%%%%%
\subsection{Hermitian scalar curvature}
On an almost K\"ahler manifold $(M,\omega, J)$, the Hermitian connection $\nabla$(c.f.\cite{MR1456265}) is defined by 
$$
\nabla_XY = \D_XY-\frac{1}{2}J(\D_XJ)Y.
$$
When $(M,\omega,J)$ is K\"ahler,  $\D,\nabla$ coincide with each other. 

% Instead  they are related by $\nabla$(c.f. \cite{MR1969266, fu2022scalar, MR1456265}) 

We denote $R$ the Levi-Civita curvature tensor and  $R^{\nabla}$ the canonical Hermitian  curvature tensor, i.e.
$$
R(X,Y)=\D_{[X,Y]}-[\D_X, \D_Y],\ \  R^{\nabla}(X,Y)=\nabla_{[X,Y]}-[\nabla_X, \nabla_Y].
$$

We denote by $\ric$ the  Riemann Ricci curvature and  by $\ric^+$ the $J$-invariant part of $\ric$, 
\begin{align}\label{ric+}
\ric^+(X,Y)=\frac{1}{2}(\ric(X,Y)+\ric(JX,JY)).
\end{align}
The compatibility of $\ric^+$ and $J$ determines a 2-form $\rho$  
\begin{align}\label{rho}
\rho(X,Y)=\ric^+(JX,Y).
\end{align}
We define another 2-form $\rho^*$ via contracting $R$ in terms of $\omega$, 
\begin{align}\label{rho*}
\rho^*(X,Y)=\frac{1}{2}\sum_{i=1}^ng(R(X,Y)e_i, Je_i).
\end{align}
The (0,2)-tensor $\ric^+, \rho, \rho^*$ will be used in the computation of Lichnerowicz operator in Section \ref{sect}. The Hermitian Ricci form is the contraction of $R^{\nabla}$ by $\omega$, 
\begin{align}
\rch_J(X,Y) = \frac{1}{2}\sum_{i=1}^ng(R^{\nabla}(X,Y)e_i,Je_i).
\end{align}

The Hermitian scalar curvature is defined by
\begin{align}
\sch_J:=\sch(J) = 2(\rch_J,\omega).
\end{align}
The averaged Hermitian scalar curvature 
\begin{align*}
\underline s^{\nabla}=\int_M \sch_J \vol/\int_M\vol
\end{align*} is a topological constant which does not depend on the $J\in\ac$.

We denote by $\K$ the symplectic gradient of the Hermitian scalar curvature, i.e.
\begin{align}
\K=\g_{\omega}\sch_J.
\end{align}
When $J$ is EAK, $\K$ is exactly the \emph{extremal vector field} (EVF). In general, EVF is defined to be $\g_{\omega} (\Pi_\omega\sch_J)$, where $\Pi$ is the $L^2$-orthogonal projection in $\ham_{\omega}$ and $\Pi_\omega\sch_J$ is independent of $J$. EVF was first introduced by Mabuchi and Futaki \cite{MR1314584} in K\"ahler geometry.  Lejmi \cite[Section 3.2]{MR2747965} generalised this notion to almsot K\"ahler manifolds. $\K$ is very important in the variation of Calabi functional.

%%%%%%%%%%%%%%%%%%%%%%%%%%%%%%%%%%%%%%%%%%%%%%%%%%%%%%%%%%%%%%%%%%%%%%%%%%%%%%%%%%%%%%%%%%%%%%%%%%%%%%%%%%%%%%%%%%%%%%%%%%%%%%%%%%%%%%
%\subsection{The space of almost complex structures compatible with a symplectic form}

\subsection{Complexified orbit in $\ac$}

Recall that $\ac$ consists of all $\omega$-compatible almost complex structures, and its tangent space is 
$$
T_J\ac=\{v\in\en(TM):vJ+Jv=0, \omega(JX,vY)+\omega(vX,JY)=0\}.
$$
By the compatible condition, $\omega(JX,vY)+\omega(vX,JY)=0$ is equivalent to $(X,vY)=(vX,Y)$, i.e., $v$ is self-adjoint.
So we have an equivalent characterisation of $T_J\ac$,
\begin{align}\label{tangent}
T_J\ac=\{v\in\Gamma(\End(TM)): vJ+Jv=0, \  (vX,Y)= (X,vY)\}.
\end{align}

The Hamiltonian group $\Ham$ has a natural action on $\ac$ by
$$
(\phi, J)\to \phi_*J\phi^{-1}_*,\quad  \forall \phi\in\Ham, \quad J\in\ac.
$$
The tangent space of the resulting orbit $\mathcal O = \{\phi_*J\phi^{-1}_*:\phi\in\Ham\}$ through $J$ is
$$
 T_J\mathcal O =\{\mathcal L_{X}J:X\in\ham \}=\{\mathcal L_{\g_{\omega}f}J:f\in C^{\infty}_0(M) \}.
$$

\begin{lem}\label{lem25}
For any $f\in C^{\infty}(M,\R)$, 
$$
{\mathcal L}_{\g_{\omega}f}J,\ \  J{\mathcal L}_{\g_{\omega}f}J\in T_J\ac.
$$
\end{lem}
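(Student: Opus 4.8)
The plan is to verify the two defining conditions in the characterisation \eqref{tangent} of $T_J\ac$: an endomorphism $w$ lies in $T_J\ac$ exactly when it anticommutes with $J$ (i.e. $wJ+Jw=0$) and is $g$-self-adjoint (i.e. $(wX,Y)=(X,wY)$). Write $X=\g_{\omega}f$ and $v=\mathcal{L}_XJ$, and recall that $X$ is a Hamiltonian, hence symplectic, vector field, so that $\mathcal{L}_X\omega=0$. The anticommutation relation is the easy half: applying the Leibniz rule for the Lie derivative to $J^2=-\id$ gives $0=\mathcal{L}_X(J^2)=(\mathcal{L}_XJ)J+J(\mathcal{L}_XJ)$, that is $vJ+Jv=0$. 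This uses nothing about $X$ beyond its being a vector field.

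The substance of the lemma is self-adjointness, and this is where the symplectic nature of $X$ enters. Using the compatibility $g(\cdot,\cdot)=\omega(\cdot,J\cdot)$ from \eqref{compatibility}, I would expand $(\mathcal{L}_Xg)(Y,Z)=X(g(Y,Z))-g([X,Y],Z)-g(Y,[X,Z])$ by rewriting $g(Y,Z)=\omega(Y,JZ)$ and differentiating, inserting $\mathcal{L}_X(JZ)=(\mathcal{L}_XJ)Z+J\mathcal{L}_XZ$. Since $\mathcal{L}_X\omega=0$, the terms involving $[X,Y]$ and $[X,Z]$ recombine into $g$-terms that cancel against the two correction terms, leaving the clean identity
\begin{align*}
(\mathcal{L}_Xg)(Y,Z)=\omega(Y,vZ).
\end{align*}
Because $g$ is symmetric, so is $\mathcal{L}_Xg$, whence $\omega(Y,vZ)=\omega(Z,vY)$ for all $Y,Z$. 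Combining this relation with the anticommutation $v(JZ)=-J(vZ)$ from the first step and the identity $\omega(\cdot,J\cdot)=g$ then yields $g(vY,Z)=g(Y,vZ)$, so $v$ is $g$-self-adjoint. Together with the anticommutation relation this places $v=\mathcal{L}_XJ$ in $T_J\ac$.

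Finally, for $Jv=J\mathcal{L}_XJ$ I would observe that $T_J\ac$ is stable under left multiplication by $J$, so the second claim is immediate from the first. Indeed, if $vJ=-Jv$ then $(Jv)J+J(Jv)=JvJ-v=v-v=0$, so $Jv$ anticommutes with $J$; and since $J$ is $g$-skew-adjoint (from $g(JY,Z)=\omega(Y,Z)=-g(Y,JZ)$) while $v$ is self-adjoint, we get $(Jv)^{*}=v^{*}J^{*}=-vJ=Jv$, so $Jv$ is self-adjoint. Hence $Jv\in T_J\ac$. The one genuinely computational step, and the place where the hypothesis that $X$ is symplectic is essential, is the derivation of $(\mathcal{L}_Xg)(Y,Z)=\omega(Y,vZ)$; the bookkeeping of the Leibniz terms and the cancellation afforded by $\mathcal{L}_X\omega=0$ is the main obstacle, while everything else is formal manipulation with $J^2=-\id$ and the compatibility identities.
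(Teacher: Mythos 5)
Your proposal is correct and follows essentially the same route as the paper: anticommutation from the Leibniz rule applied to $J^2=-\id$, and self-adjointness from the identity relating $\mathcal L_{\g_{\omega}f}g$ to $\omega(\cdot,(\mathcal L_{\g_{\omega}f}J)\cdot)$ (valid because $\mathcal L_{\g_{\omega}f}\omega=0$) together with the symmetry of the Lie derivative of $g$. The only cosmetic difference is the order: the paper reads off self-adjointness of $J\mathcal L_{\g_{\omega}f}J$ directly from that identity and then transfers it to $\mathcal L_{\g_{\omega}f}J$ via $J$-invariance of $g$, whereas you establish it for $\mathcal L_{\g_{\omega}f}J$ first and pass to $J\mathcal L_{\g_{\omega}f}J$ using that $J$ is $g$-skew-adjoint.
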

\begin{proof}
By (\ref{tangent}) we need to prove that ${\mathcal L}_{\g_{\omega}f}J,\ J{\mathcal L}_{\g_{\omega}f}J $ satisfy 
\begin{enumerate}
\item \label{condi1}
$vJ+Jv=0$;  
\item\label{condi2}
$ (vX,Y)= (X,vY),\ \  \forall X,Y\in \Gamma(TM).$
\end{enumerate}
Taking Lie derivative on both sides of $J^2=-1$ gives us
\begin{align*}
{\mathcal L}_{\g_{\omega}f}(J^2)={\mathcal L}_{\g_{\omega}f}JJ+J{\mathcal L}_{\g_{\omega}f}J=0,
\end{align*}
i.e. ${\mathcal L}_{\g_{\omega}f}J$ satisfies condition (\ref{condi1}). For $J{\mathcal L}_{\g_{\omega}f}J$, we compute
$$
J(J{\mathcal L}_{\g_{\omega}f}J) =  -J({\mathcal L}_{\g_{\omega}f}JJ) = -(J{\mathcal L}_{\g_{\omega}f}J)J,
$$
i.e. $J{\mathcal L}_{\g_{\omega}f}J$ satisfies condition (\ref{condi1}).

For the second condition, since $\omega(J.,.) = -g(.,.)$ and $\mathcal L_{\g_{\omega}f}\omega=0$, we have
\begin{align}\label{213}
(\mathcal L_{\g_{\omega}f}g )(X,Y)=-\omega({\mathcal L}_{\g_{\omega}f}J X,Y)=-g (J{\mathcal L}_{\g_{\omega}f}J X,Y).
\end{align}
That $g$ is symmetric implies that $\mathcal L_{\g_{\omega}f}g $ is also symmetric. Thus $J{\mathcal L}_{\g_{\omega}f}J$ is self-adjoint and $J{\mathcal L}_{\g_{\omega}f}J$ satisfies condition (\ref{condi2}). For ${\mathcal L}_{\g_{\omega}f}J$, we compute
\begin{align*}
 g({\mathcal L}_{\g_{\omega}f}JX,Y)=& g(J{\mathcal L}_{\g_{\omega}f}JX,JY)\\
=& g(X,J{\mathcal L}_{\g_{\omega}f}JJY)\\
=& g(X,{\mathcal L}_{\g_{\omega}f}JY).
\end{align*}

\end{proof}

% and \cite[Section 2.2]{MR2303522}. 
%%%%%%%%%%%%%%%%%%%%%%%%%%%%%%%%%%%%%%%%%%%%%%%%%%%%%%%%%%%%%%%%%%%%%%%%%%%%%%%%%%%%%%%%%%%%%%%%

According to the Hamiltonian action on $\ac$, any function $f\in C^{\infty}_0(M,\R)=\ham_{\omega}$ induces a tangent vector on $T_J\ac$ by
$$
f\to \Lie_{\g_{\omega}f}J.
$$
The Lie algebra $\ham_{\omega}$ is complexified by using $C_0^\infty(M,\mathbb C)=\ham_{\omega}+\sqrt{-1}\ham_{\omega}$. The imaginary part $\sqrt{-1}f$ in the complexified Lie algebra induces a tangent vector in $T_J\ac$ 
$$
 J{\mathcal L}_{\g_{\omega}f}J.
$$

By \lemref{lem25},  there exists a distribution  $D$ on $T\ac$ given by 
$$
D_J = \{{\mathcal L}_{\g_{\omega}f}J,\ \ J{\mathcal L}_{\g_{\omega}f}J:f\in\ham_{\omega}\},
$$
which can be viewed as the distribution induced by the complexified Lie algebra.

It is obvious that $D_J$ is a holomorphic distribution,  that is 
$$
\mathbb J D_J=D_J.
$$
In 1983, Donaldson\cite[Page 408]{MR1622931} first observed that 
\begin{lem}\label{integrable distribution}
$D_J$ forms an integrable distribution on $\ac$.
\end{lem}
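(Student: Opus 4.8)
The plan is to prove integrability via the Frobenius criterion: since $D$ is a holomorphic subbundle of $T\ac$, it is integrable as soon as it is closed under the Lie bracket of vector fields, i.e. whenever $U,V$ are (local) sections of $D$ then so is $[U,V]$. Writing $D = D^{\mathbb R}\oplus \mathbb J D^{\mathbb R}$ with $D^{\mathbb R}_J = \IM P = \{\Lie_{\g_{\omega}f}J : f\in\ham_{\omega}\}$, it suffices to check involutivity on the generators $\xi_f := \Lie_{\g_{\omega}f}J$ and $\mathbb J\xi_f = J\Lie_{\g_{\omega}f}J$, namely that $[\xi_f,\xi_g]$, $[\xi_f,\mathbb J\xi_g]$ and $[\mathbb J\xi_f,\mathbb J\xi_g]$ all lie in $D$. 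The first thing I would stress is that $\g_{\omega}f$ is determined by $\iota_{\g_{\omega}f}\omega=-df$ and is therefore independent of $J$; hence $\xi_f\colon J\mapsto \Lie_{\g_{\omega}f}J$ is a genuine globally defined vector field on $\ac$, equal up to sign to the fundamental vector field of the $\Ham$-action generated by the Hamiltonian flow $\phi_t$ of $\g_{\omega}f$. This is what allows group-action identities to be used.

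Two structural facts then drive the computation. First, each $\phi\in\Ham$ acts on $\ac$ by $J\mapsto \phi_*J\phi_*^{-1}$, and its differential is $\mathbb C$-linear because pushforward commutes with the pointwise complex structure: $\phi_*(Jv)\phi_*^{-1} = (\phi_*J\phi_*^{-1})(\phi_*v\phi_*^{-1})$, so the action intertwines $\mathbb J$ at $J$ with $\mathbb J$ at $\phi_*J\phi_*^{-1}$. Thus $\Ham$ acts biholomorphically and, differentiating the flow at $t=0$, $\Lie_{\xi_f}\mathbb J = 0$, which unpacks to $[\xi_f,\mathbb J Z]=\mathbb J[\xi_f,Z]$ for every vector field $Z$ on $\ac$. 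Second, $f\mapsto\xi_f$ is a Lie-algebra (anti)homomorphism, $[\xi_f,\xi_g]=\pm\,\xi_{\{f,g\}}\in D^{\mathbb R}$; this is precisely the infinitesimal form of the relation $[\g_{\omega}f,\g_{\omega}g]=\g_{\omega}\{f,g\}$ recorded in the Hamiltonian group subsection.

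With these in hand the three brackets fall out. The $(\xi,\xi)$-bracket lies in $D^{\mathbb R}$ by the homomorphism property. For the mixed bracket, $[\xi_f,\mathbb J\xi_g]=\mathbb J[\xi_f,\xi_g]=\pm\,\mathbb J\xi_{\{f,g\}}\in\mathbb J D^{\mathbb R}$, and by antisymmetry $[\mathbb J\xi_f,\xi_g]=-\mathbb J[\xi_g,\xi_f]\in\mathbb J D^{\mathbb R}$; both sit in $D$. For the remaining $(\mathbb J\xi,\mathbb J\xi)$-bracket I would invoke the integrability of $\mathbb J$ on the Kähler manifold $\ac$, i.e. the vanishing of its Nijenhuis tensor, which reads $[\mathbb J\xi_f,\mathbb J\xi_g]=[\xi_f,\xi_g]+\mathbb J[\mathbb J\xi_f,\xi_g]+\mathbb J[\xi_f,\mathbb J\xi_g]$. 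Every term on the right has just been shown to lie in $D$, and $\mathbb J D = D$ (holomorphicity), so $[\mathbb J\xi_f,\mathbb J\xi_g]\in D$ as well. Hence $D$ is involutive, and the integral leaves are the complexified orbits $\mathcal D$.

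I expect the algebraic crux to be the last bracket, but the genuine difficulty is functional-analytic rather than algebraic. Since $\ac$ is infinite-dimensional, the Frobenius theorem must be applied in an appropriate category (e.g. a tame Fréchet or ILH setting), and the leaves cannot simply be realised as orbits of a complexified group $\Ham^{\mathbb C}$, because no such Lie group exists. I would therefore either cite a Frobenius theorem valid in this tame setting or, following Donaldson, treat the involutivity computation as the substantive content and construct $\mathcal D$ as the integral manifold swept out by the flows of the $\xi_f$ and $\mathbb J\xi_f$. The points demanding the most care are the rigorous (as opposed to formal) justification of $\Lie_{\xi_f}\mathbb J=0$ and the existence of the flows of the generating vector fields.
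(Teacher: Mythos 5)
The paper gives no proof of this lemma at all --- it is stated as Donaldson's observation with a citation to \cite{MR1622931} --- so there is nothing internal to compare against. Your argument is precisely the standard one underlying that observation (the $\Ham$-action on $\ac$ is holomorphic so $\Lie_{\xi_f}\mathbb J=0$, the fundamental vector fields close under bracket via $[\g_\omega f,\g_\omega g]=\g_\omega\{f,g\}$, and the last bracket $[\mathbb J\xi_f,\mathbb J\xi_g]$ is handled by the vanishing of the Nijenhuis tensor of $\mathbb J$), and it is correct at the formal level, with the infinite-dimensional caveats you already flag honestly. One small quibble: write $D=D^{\mathbb R}+\mathbb J D^{\mathbb R}$ rather than $\oplus$, since the directness of the sum is only established later (and only at EAK or constant-Hermitian-scalar-curvature points); this does not affect the involutivity computation, for which spanning sets suffice.
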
 We denote by $\mathcal D$ the integral submanifold generated by $D_J$.
For any $J\in\ac$, we call ${\mathcal D}_J$ the complexified orbit through $J$. $\ac$ has a natural foliation structure with $\mathcal D$ as leaf. 

%We remark that later in 2000, Tian \cite[Proposition 4.3]{MR1787650} assumed $J$ is K\"ahler and gave a detailed proof of \lemref{integrable distribution}.
%The tangent space of $\mathcal D$ and $\mathcal O$ is given in (\ref{ctang}).

%%%%%%%%%%%%%%%%%%%%%%%%%%%%%%%%%%%%%%%%%%%%%%%%%%%%%%%%%%%%%%%%%%%%%%%%%%%%%%%%%%%%%%%%%%%%%%%%%%%%%%%%%%%%%%%%

\section{Operators}
In this section, we will introduce some operators related to $\ac$.

\subsection{Operators $P$ and $P^\ast$}

\begin{defn}\label{P}
The operator $P: C^{\infty}(M,\R)\to T_J\ac$ is defined by
$$
P(f)=\frac{1}{2}\mathcal L_{\g_{\omega}f}J.
$$
\end{defn}
\begin{rem}
Comparing with the original definition of  $P$ by Donaldson \cite{MR1622931}(see also \cite[Page 49]{MR1787650}, \cite[Page 6]{MR2663648}), we add a normalisation factor $1/2$ in our definition of $P$. The normalisation factor ensures $P$ and the following $P$ related operations are all natural generalisation of their K\"ahler counterparts.
\end{rem}
In fact $P(f)\in T_J\mathcal O $. We  define $JP=J\circ P$, then $$JP(f)=\frac{1}{2}J\mathcal L_{\g_{\omega}f}J\in  \mathbb JT_J\mathcal O ,$$ 
where $\mathbb J$ is defined in (\ref{defj}).
So we have the decomposition
\begin{align*}
 T_J\mathcal D_J=T_J\mathcal O + \mathbb J  T_J\mathcal O=\IM P+\IM JP.
\end{align*} 

\begin{defn}\label{Past}
	We define $P^\ast:\Gamma(\End(TM))\rightarrow C^{\infty}(M,\R)$ the formal adjoint operator of $P$ via $L^2$ integral. That is, $P^\ast$ satisfies 
	\begin{align*}
		\la P^*(v), f \ra:=\la v, P(f)\ra,\quad  \forall f\in C^{\infty}(M,\R),\quad v\in\Gamma(\End(TM)),
	\end{align*}
	under the $L^2$-inner product $\la\cdot,\cdot\ra$ over $M$ induced by $g$ on $C^{\infty}(M,\R)$ and $\End(TM)$.
\end{defn}
We also define $(JP)^*$ the formal adjoint of $JP$, then by definition, we have
\begin{align*}
\la (JP)^*v, f \ra=\la v, JPf \ra=-\la Jv, Pf\ra=-\la P^*Jv, f\ra,
\end{align*}
i.e. 
\begin{align} \label{def:jpstar}
(JP)^*=-P^*Jv.
\end{align}

The following Lemma is important in the description of variation of Hermitian Calabi functional and moment map.  
It is contained in the proof of (9.6.5) in \cite[Theorem 9.6.1]{gauduchon2010calabi}. We collect it here and reformulate the proof.
\begin{lem} \label{pstar}
For any $v\in T_J\ac$, we have
\begin{align}
P^* v = \delta J(\delta Jv)^{\flat},
\end{align}
where $\delta$ is defined in (\ref{delta}).
\end{lem}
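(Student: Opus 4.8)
The plan is to compute $P^*$ directly from the defining adjoint relation $\la P^*(v), f\ra = \la v, P(f)\ra$ by unwinding $P(f) = \tfrac{1}{2}\mathcal L_{\g_\omega f}J$ and repeatedly integrating by parts. First I would rewrite the Lie derivative $\mathcal L_{\g_\omega f}J$ in terms of the Levi-Civita connection $\D$. Since $\D$ is torsion-free, for any vector field $X = \g_\omega f = J\g f$ one has $\mathcal L_X J = \D_X J - [\D X, J]$ as endomorphisms (where $\D X$ denotes the endomorphism $Y\mapsto \D_Y X$), so that $\mathcal L_{\g_\omega f}J = \D_{\g_\omega f}J + J\,\D(\g_\omega f) - \D(\g_\omega f)\,J$. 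The goal is to move all derivatives off $f$ and onto $v$, collecting the result into the stated expression $\delta J(\delta Jv)^\flat$.

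The key intermediate step is to interpret the target formula correctly: $\delta Jv$ is the codifferential of the endomorphism (or associated $2$-tensor) $Jv$, which is a $1$-form, so $(\delta Jv)^\flat$ should be read as the appropriate musical adjustment turning it into the object on which the outer $\delta J$ acts. I would use the adjointness relation (\ref{delta-def}), namely $\la \delta T, S\ra = \la T, \D S\ra$, as the main engine: each integration by parts trades a $\D$ on the $f$-side for a $\delta$ on the $v$-side. Because $v\in T_J\ac$ satisfies $vJ+Jv=0$ and is self-adjoint by (\ref{tangent}), and because $\delta J = 0$ on an almost Kähler manifold by Corollary~\ref{identity}, many boundary and error terms that would otherwise appear will drop out. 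In particular I expect the identity $\delta J = 0$ and the relation (\ref{23}), $\D_{JX}J = (\D_X J)J$, to be exactly what eliminates the terms involving $\D J$ that obstruct a clean factorisation.

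Concretely, I would proceed in three moves. (i) Express $P(f)$ through $\D(\g_\omega f) = \D(J\g f)$ and isolate its $J$-anti-commuting self-adjoint part, since only that part pairs nontrivially with $v\in T_J\ac$. (ii) Pair against $v$ and integrate by parts once to reduce $\la v, \D(\g_\omega f)\text{-terms}\ra$ to $\la \delta(\text{something in }v), \g_\omega f\text{ or its derivative}\ra$; here the anticommutation $vJ=-Jv$ lets me convert $\g_\omega f = J\g f$ factors into factors acting through $Jv$. (iii) Integrate by parts a second time to peel off the remaining derivative on $f$, producing the outer $\delta$ and the gradient that reconstitutes $\g f = (df)^\sharp$, thereby recognising $\delta J(\delta Jv)^\flat$.

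The main obstacle I anticipate is bookkeeping the musical isomorphisms and the placement of $J$ consistently, since $\delta$ of an endomorphism versus $\delta$ of a $2$-form differ by index raising, and the formula $\delta J(\delta Jv)^\flat$ mixes both conventions. The genuine content, as opposed to routine integration by parts, will be verifying that all terms containing $\D J$ (equivalently, the Nijenhuis tensor $N$ via Lemma~\ref{djandn}) cancel. These terms are the obstruction to the formula holding on a general almost complex manifold, and I expect their vanishing to rely precisely on the almost Kähler hypothesis through $\delta J = 0$ and (\ref{23}). Getting this cancellation to close cleanly, rather than leaving a residual $N$-dependent correction, is the crux of the argument.
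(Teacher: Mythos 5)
Your plan is correct and arrives at the same key intermediate identity as the paper, namely (\ref{equ34}), $\la \Lie_{\g_{\omega}f}J, v\ra = -2\la \D\g_{\omega}f, Jv\ra$, after which two integrations by parts via (\ref{delta-def}) give $P^*v=\delta J(\delta Jv)^{\flat}$; but you reach that identity by a different route, and your diagnosis of where the crux lies needs adjusting. The paper never expands $\Lie_XJ$ through the connection: it differentiates the compatibility $g=\omega(\cdot,J\cdot)$ along the Hamiltonian flow, using $\Lie_Z\omega=0$ to obtain $g(\D_XZ,Y)+g(X,\D_YZ)=g(JX,(\Lie_ZJ)Y)$, and then specializes $X=-Jv(e_i)$, $Y=e_i$ and sums, so the $\D J$ term you worry about never appears. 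Your torsion-free expansion $\Lie_XJ=\D_XJ-[\D X,J]$ works equally well, but the term $\la\D_{\g_{\omega}f}J,v\ra$ does not need $\delta J=0$, the relation (\ref{23}), or any Nijenhuis cancellation to disappear: it vanishes pointwise because $\D_XJ$ is anti-self-adjoint with respect to $g$ (differentiate $g(J\cdot,\cdot)=-g(\cdot,J\cdot)$, which holds for any $\omega$-compatible $J$), while $v$ is self-adjoint by (\ref{tangent}), and the trace pairing of an anti-self-adjoint endomorphism against a self-adjoint one is zero. The surviving commutator term reduces to $-2\la\D\g_{\omega}f,Jv\ra$ after reindexing the adapted frame $e_i\mapsto Je_i$ and using $vJ=-Jv$. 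So the genuine content is lighter than you anticipate: on your route the closedness of $\omega$ is not invoked at all beyond the structural properties of $T_J\ac$, whereas the paper's route uses it through $\Lie_{\g_{\omega}f}\omega=0$; what remains is only the musical bookkeeping in the final two integrations by parts, which is routine.
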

\begin{proof}
For any vector fields $X,Y$ and symplectic vector field $Z$, it follows from the compatible condition $g(X,Y)=\omega(X,JY)$ and $\mathcal L_Z\omega=0$ that
\begin{align*}
(\mathcal L_Zg)(X,Y)=(\mathcal L_Z\omega)(X,JY)+\omega(X,(\mathcal L_ZJ)Y)=\omega(X,(\mathcal L_ZJ)Y).
\end{align*}
Using the formula 
\begin{align*}
(\mathcal L_Zg)(X,Y)= g(\D_XZ,Y)+ g(X, \D_YZ),
\end{align*}
and the compatible condition $\omega(.,.)=g(J., .)$, 
we have
\begin{align}\label{formu26}
g(\D_XZ,Y)+ g(X, \D_YZ)=g(JX,(\mathcal L_ZJ)Y).
\end{align}

In order to compute $\la v,P(f)\ra$, we choose an orthonormal basis as we did in (\ref{basis}). We let $Z=\g_{\omega}f,\ Y=e_i,\ X=-Jv(e_i)$ in (\ref{formu26}) and compute
\begin{align}
\label{basic1}
\begin{split}
\la \mathcal L_{\g_{\omega}f}J, v\ra=&-\int_M\sum_{i=1}^ng((\mathcal L_{\g_{\omega}f}J)e_i, JJv(e_i))
\\
=&-\int_M\sum_{i=1}^n g(\D_{Jv(e_i)}\g_{\omega}f,e_i)+ g(Jv(e_i), \D_{e_i}\g_{\omega}f)\\
=&-\int_M \tr(\D \g_{\omega}f\circ Jv)+g(Jv, \D\g_{\omega}f )\\
=&-\int_M \tr(Jv\circ\D \g_{\omega}f  )+g(Jv, \D\g_{\omega}f ).
\end{split}
\end{align}
Here we view $\D \g_{\omega}f\in\End(TM)$ as $X\to \D_X \g_{\omega}f$. Since $Jv\in T_J\ac$,  $Jv$ is self-adjoint by (\ref{tangent}) and 
\begin{align*}
\tr( Jv \circ\D\g_{\omega}f ) =\sum_{i=1}^n g( Jv\D_{e_i}\g_{\omega}f,e_i)
=\sum_{i=1}^n g( \D_{e_i}\g_{\omega}f,Jve_i)=g(\D\g_{\omega}f, Jv  ).
\end{align*}
It follows from (\ref{basic1}) that
\begin{align}\label{equ34}
\la \mathcal L_{\g_{\omega}f}J, v\ra=-2\la \D\g_{\omega}f, Jv \ra.
\end{align}

Then we compute
\begin{align*}
\la P f, v\ra=&\frac{1}{2}\la \mathcal L_{\g_{\omega}f}J, v\ra
=-\la \D \g_{\omega}f, Jv \ra
=-\la  \g_{\omega}f, \delta Jv \ra \\
=& \la  df, J(\delta Jv)^{\flat} \ra=\la f,\delta J(\delta Jv)^{\flat}\ra.
\end{align*}
Taking adjoint, we thus obtain $P^* v = \delta J(\delta Jv)^{\flat}$.
\end{proof}

We introduce the
Mohsen Formula \cite{mohsen2003symplectomorphismes}(also see \cite[Theorem 2.6]{MR4234988}),
\begin{lem}[Moshsen Formula]\label{mohsen}
For any $v\in T_J\ac$ and any curve $J(t)\in \ac$ satisfying $J(0)=J, J'(0)=v$, the first variation of the Hermtian ricci form and the Hermitian scalar curvature is
\begin{align}
\frac{d}{dt}|_{t=0}\rho^{\nabla}_{J(t)}=-\frac{1}{2}d(\delta v)^{\flat},\quad
\frac{d}{dt}|_{t=0}\sch_{J(t)}=-\delta J(\delta v)^{\flat}.
\end{align}
\end{lem}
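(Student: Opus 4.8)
My plan is to establish the Ricci-form identity first and then deduce the scalar-curvature identity from it by a symplectic contraction. The guiding principle is that $\rho^{\nabla}_{J}$ is the curvature of the canonical Hermitian connection induced on the anticanonical line bundle $\Lambda^{m}_{\C}(TM,J)$. More precisely, writing the canonical connection locally as $\nabla=d+A$ with $A$ an $\End(TM)$-valued $1$-form that is complex linear for $J$, one has $\rho^{\nabla}=\IM\,\tr_{\C}R^{\nabla}$ and $\tr_{\C}R^{\nabla}=d\,\tr_{\C}A$ (since $\tr_{\C}(A\wedge A)=0$), so $\rho^{\nabla}=d(\IM\,\tr_{\C}A)$. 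In particular $[\rho^{\nabla}_{J(t)}]=2\pi c_{1}(M)$ is independent of $t$, hence $\frac{d}{dt}\rho^{\nabla}_{J(t)}$ is exact, and the Chern--Weil transgression identifies its primitive: $\frac{d}{dt}\big|_{0}\rho^{\nabla}_{J(t)}=d\gamma$ with $\gamma=\IM\,\tr_{\C}\dot\nabla$, where $\dot\nabla:=\frac{d}{dt}\big|_{0}\nabla^{(t)}\in\Omega^{1}(\End TM)$ is the (tensorial, globally defined) first variation of the canonical connection. The entire content of the Ricci identity is therefore the evaluation $\gamma=-\tfrac12(\delta v)^{\flat}$.

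To compute $\gamma$ I would differentiate $\nabla_{X}Y=\D_{X}Y-\tfrac12 J(\D_{X}J)Y$ along the path with $\dot J=v$. This needs the first variation $\dot\D$ of the Levi-Civita connection, which the Koszul formula expresses through $\dot g(X,Y)=\omega(X,vY)$; note $\dot g$ is symmetric because $v\in T_{J}\ac$, by \eqref{tangent}, and trace-free since the volume form $\vol=\omega^{m}/m!$ is fixed. After taking $\tr_{\C}$ and the imaginary part, the terms split into a metric contribution coming from $\dot\D$ and a contribution from $\tfrac12\frac{d}{dt}\big(J_{t}(\D^{(t)}J_{t})\big)$. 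The metric contribution collapses under the trace to the divergence $\delta v=-\sum_{i}(\D_{e_{i}}v)(e_{i})$, producing $-\tfrac12(\delta v)^{\flat}$; the remaining $\D J$-- and Nijenhuis--type terms must cancel. Here the two structural identities of almost K\"ahler geometry are exactly what is required: \corref{identity} (that is $\delta J=0$, equivalently $\sum_{i}(\D_{e_{i}}J)(e_{i})=0$) removes the torsion traces, while \lemref{djandn} rewrites the surviving $\D J$ terms through $N$ so that, together with $vJ+Jv=0$, they cancel in pairs after tracing. This yields $\gamma=-\tfrac12(\delta v)^{\flat}$, hence $\frac{d}{dt}\big|_{0}\rho^{\nabla}_{J(t)}=-\tfrac12 d(\delta v)^{\flat}$.

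For the scalar curvature I would use that the pairing in $\sch=2(\rho^{\nabla},\omega)$ is purely symplectic: for any $2$-form $\alpha$ one has $(\alpha,\omega)_{g}=\Lambda_{\omega}\alpha$, where $\Lambda_{\omega}$, the adjoint of $\alpha\mapsto\omega\wedge\alpha$, depends only on the fixed form $\omega$ and not on the compatible metric. Thus $\sch=2\Lambda_{\omega}\rho^{\nabla}$ and the derivative passes through $\Lambda_{\omega}$:
\[
\frac{d}{dt}\Big|_{0}\sch_{J(t)}=2\Lambda_{\omega}\,\frac{d}{dt}\Big|_{0}\rho^{\nabla}_{J(t)}=-\Lambda_{\omega}\,d(\delta v)^{\flat}.
\]
It remains to invoke the almost K\"ahler identity $\Lambda_{\omega}d\beta=\delta J\beta$, valid for every $1$-form $\beta$; a frame computation reduces it to $\sum_{i}(\D_{e_{i}}J)(e_{i})=0$, i.e. once more \corref{identity}, and \lemref{lem36} is its special case $\beta=df$. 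Applying it to $\beta=(\delta v)^{\flat}$ gives $\frac{d}{dt}\big|_{0}\sch_{J(t)}=-\delta J(\delta v)^{\flat}$, as claimed.

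The main obstacle is the second step. The first variation of the canonical Hermitian connection intertwines $\dot\D$ with $\dot J=v$, and taking a complex trace generates a proliferation of $\D J$-- and $N$--terms; the delicate point is to verify that after tracing only the divergence $\delta v$ survives while every torsion term cancels, which is forced precisely by \corref{identity} and \lemref{djandn}. If this Christoffel bookkeeping becomes unwieldy, I would instead bypass $A$ entirely and compute the curvature variation of the Chern connection on $\Lambda^{m}_{\C}(TM,J)$ directly from the variation of its induced Hermitian metric and of the $(0,1)$-operator determined by $J(t)$; this confines the computation to a single line bundle, at the cost of setting up the almost-complex Chern-connection formalism.
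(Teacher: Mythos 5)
You should first be aware that the paper contains no proof of \lemref{mohsen}: it is imported verbatim from Mohsen \cite{mohsen2003symplectomorphismes} and Garc\'ia-Prada--Salamon \cite[Theorem 2.6]{MR4234988}, so any self-contained argument is automatically a different route from the paper's. Judged on its own terms, the second half of your proposal is correct and complete: $\Lambda_\omega$ is indeed metric-independent among $\omega$-compatible metrics, since $\alpha\wedge\frac{\omega^{m-1}}{(m-1)!}=(\Lambda_\omega\alpha)\,\frac{\omega^m}{m!}$ defines it purely symplectically; the identity $\Lambda_\omega d\beta=\delta(J\beta)$ for $1$-forms reduces, exactly as you say, to $\delta J=0$ (\corref{identity}), with \lemref{lem36} as the special case $\beta=df$; and with $\sch=2(\rch,\omega)$ this converts the Ricci-form variation into $\frac{d}{dt}\big|_{0}\sch_{J(t)}=-\delta J(\delta v)^{\flat}$, which is consistent with \lemref{pstar} and \eqref{scalarvaria}. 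So the reduction of the scalar statement to the Ricci statement is sound.

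The gap is in the first half, and it is twofold. First, the transgression step is stated too loosely: since $\nabla^{(t)}J_t=0$, differentiating gives $[\dot\nabla_X,J]=-\nabla_X v\neq 0$, so $\dot\nabla$ is \emph{not} $J$-complex linear and ``$\IM\tr_{\C}\dot\nabla$'' must be replaced by the real expression $\gamma(X)=-\tfrac12\tr(J\dot\nabla_X)$ (only the $J$-commuting part of $\dot\nabla_X$ contributes, since $\tr(Ja)=0$ whenever $aJ=-Ja$). Moreover, differentiating $\rch_t(X,Y)=-\tfrac12\tr\bigl(J_t\,\Rch_t(X,Y)\bigr)$ produces an extra term $-\tfrac12\tr\bigl(v\,\Rch(X,Y)\bigr)$ that your sketch never addresses; it does vanish, because $v$ anti-commutes with $J$ while $\Rch(X,Y)$ commutes with it, so $\tr(v\Rch)=\tr\bigl(J\,v\Rch\,J^{-1}\bigr)=-\tr(v\Rch)$, and one also needs $\nabla J=0$ at $t=0$ to justify $\tr(J\,d^{\nabla}\dot\nabla)=d\,\tr(J\dot\nabla)$. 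These points are repairable. Second, and decisively: the entire content of the Ricci identity is the evaluation $\gamma=-\tfrac12(\delta v)^{\flat}$, and at exactly this point your proposal only \emph{asserts} that, after inserting the Koszul variation of $\D$ (with $\dot g(X,Y)=\omega(X,vY)$) and the variation of $\tfrac12 J\,\D J$, ``the metric contribution collapses to $\delta v$ while every torsion term cancels'' via \corref{identity} and \lemref{djandn}. That cancellation \emph{is} the lemma; naming the correct ingredients and predicting the outcome is not a verification, and your closing paragraph concedes the bookkeeping is not carried out (offering instead an alternative Chern-connection computation that is likewise not performed). As it stands, the proposal is a credible and well-oriented plan whose decisive computation is missing, so it cannot yet be accepted as a proof of \lemref{mohsen}.
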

If we view Hermitian scalar curvature as functional on $\ac$, combining \lemref{mohsen} and \lemref{pstar} we have
 \begin{align}\label{scalarvaria}
D\sch_J(v)=P^* Jv=-(JP)^*v.
\end{align}
We can immediately obtain the description of EAK condition, i.e. the Euler-Lagrange equation of the Hermitian Calabi functional
$$
\mathcal C(J)=\int_M(\sch(J))^2 \vol.
$$
\begin{cor}[\cite{MR1969266, gauduchon2010calabi}]\label{1stvaria}
$J\in \ac$ is EAK iff $\K=\g_{\omega}s^{\nabla}_J$ is a real holomorphic vector field.
\end{cor}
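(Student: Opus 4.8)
The plan is to read this corollary as the Euler--Lagrange equation for $\mathcal C$: I will show that $J$ is a critical point of $\mathcal C$ if and only if $\Lie_\K J=0$, which is precisely the statement that $\K$ is a (real) holomorphic vector field, i.e. that $J$ is EAK. Since $\omega$, and hence the volume form $\vol=\omega^m/m!$, is fixed along every variation, the chain rule gives $D\mathcal C(v)=2\la \sch_J, D\sch_J(v)\ra$ for each $v\in T_J\ac$. The only analytic input I need is the first variation of the Hermitian scalar curvature recorded in \eqref{scalarvaria}, namely $D\sch_J(v)=P^*Jv$, which itself comes from the Mohsen formula \lemref{mohsen} together with \lemref{pstar}.

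First I would substitute \eqref{scalarvaria} and move $P^*$ across the $L^2$ pairing using the adjointness in \defref{Past}, then rewrite $P(\sch_J)=\tfrac12\Lie_{\g_\omega \sch_J}J=\tfrac12\Lie_\K J$ from \defref{P}:
\begin{align*}
D\mathcal C(v)=2\la \sch_J, P^*(Jv)\ra=2\la P(\sch_J), Jv\ra=\la \Lie_\K J, Jv\ra .
\end{align*}
So the whole question reduces to deciding when $\la \Lie_\K J, Jv\ra$ vanishes for every $v$. The next ingredient is purely structural: the map $v\mapsto Jv$ is a linear automorphism of $T_J\ac$. Indeed, using the characterisation \eqref{tangent}, if $v$ anticommutes with $J$ and is $g$-self-adjoint then so is $Jv$ (with inverse $-J\,\cdot$, since $J$ is a $g$-isometry). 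Hence $Jv$ ranges over all of $T_J\ac$ as $v$ does, and $D\mathcal C\equiv 0$ is equivalent to $\Lie_\K J$ being $L^2$-orthogonal to the entire tangent space $T_J\ac$.

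The decisive observation, and what I expect to be the only genuine step, is that $\Lie_\K J$ is itself a tangent vector: by \lemref{lem25}, or equivalently because $\Lie_\K J=2P(\sch_J)\in\IM P\subset T_J\ac$, the gradient direction lives in the very space against which it is being tested. Orthogonality of $\Lie_\K J$ to all of $T_J\ac$ therefore includes orthogonality to itself, so taking the test vector to be $\Lie_\K J$ yields $\la \Lie_\K J, \Lie_\K J\ra=0$ and hence $\Lie_\K J=0$; conversely $\Lie_\K J=0$ makes $D\mathcal C$ vanish identically. Since $\Lie_\K J=0$ is by definition the condition that $\K$ is a real holomorphic vector field, this gives the claimed equivalence. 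No serious estimate is required: the content is the algebraic identity $D\mathcal C(v)=\la \Lie_\K J, Jv\ra$ combined with the self-pairing trick made available by $\Lie_\K J\in T_J\ac$.
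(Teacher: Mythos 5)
Your proposal is correct and follows essentially the same route as the paper: both compute $D\mathcal C(v)=2\la \sch_J, D\sch_J(v)\ra$ via the Mohsen formula together with \lemref{pstar} (i.e.\ \eqref{scalarvaria}), move the adjoint across to obtain $D\mathcal C(v)=\la \Lie_\K J, Jv\ra=-\la v, J\Lie_\K J\ra$, and conclude that criticality is equivalent to $\Lie_\K J=0$ because the gradient direction itself lies in $T_J\ac$. Your extra remarks (that $v\mapsto Jv$ is an automorphism of $T_J\ac$ and the self-pairing step) merely make explicit what the paper leaves implicit.
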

\begin{proof}
It follows from (\ref{scalarvaria}) that
\begin{align}
\label{ca1}
D\mathcal C(v)
=&-2\la (JP)^* v, s^{\nabla}\ra=-2\la v, JP (\sch)\ra
=-\la v , J\Lie_{\K}J\ra.
\end{align}
Thus $J\in\ac$ is a critical point of $\mathcal{C}$ iff $\Lie_{\K}J=0$, i.e. $\g_{\omega} s^{\nabla}$ is a holomorphic vector field.
\end{proof}

Now we can see \eqref{scalarvaria} yields Donaldson's famous results: the Hermitian scalar curvature is a moment map $\mu:\ac\to\ham^*_{\omega}$ for the Hamiltonian action on $\ac$ via the $L^2$-product
\begin{align*}
	\mu(J)(f) = \int_Mf(\sch_J-\bar s^{\nabla})\vol, \quad \forall f\in \ham_{\omega}.
\end{align*}
For any $f\in \ham$, the induced vector on $T_J\ac$ is $\rho(f) = \Lie_{\g_{\omega}f} J$
we only need to prove that 
\begin{align*}
d \mu(J)(f)(v)=-\iota_{\rho(f)}\bm\kappa(v), \forall v\in T_J\ac,
\end{align*}
where ${ \bm{\kappa}}(u,v)$ is the K\"ahler form of the K\"ahler manifold $\ac$ defined by 
$$
{ \bm{\kappa}}(u,v)=\int_M\tr(Juv)=\la Ju,v\ra.
$$
We further compute with \eqref{scalarvaria} to get
\begin{align*}
d \mu(J)(f)(v)=\int_Mf(D\sch(v))=-\int_M f(JP)^*v=-\la JP(f), v\ra=-\iota_{\rho(f)}{ \bm{\kappa}}(v).
\end{align*}

%%%%%%%%%%%%%%%%%%%%%%%%%%%%%%%%%%%%%%%%%%%%%%%%%%%%%%%%%%%%%%%%%%%%%%%%%%%%%%%%%%%%%%%%%%%%%%%%%%%%%%%%%%
\subsection{Lichnerowicz operator $\Li$}
Lichnerowicz operator is a 4th order elliptic operator defined on K\"ahler manifolds. It was first introduced in 1958 by Lichnerowicz\cite[Chapter V]{MR0124009}. Later in 1985, Calabi(\cite{MR780039}) used the complex version of Lichnerowicz operator, which is called Calabi operators by Gauduchon in \cite[Section 4.5]{gauduchon2010calabi} , to calculate the variation of  Calabi functional. Gauduchon gave a very detailed and comprehensive introduction of Lichnerowicz operator in his book \cite{gauduchon2010calabi}. In this section, we generalise this notation to almost K\"ahler manifolds.

\begin{defn}\label{myl1}
On an almost K\"ahler manifold $(M,\omega, J)$, the generalised Lichnerowicz oeprator $\Li: C^{\infty}(M)\to  C^{\infty}(M)$ is defined by 
\begin{align*}
\Li(f)=P^*P(f), \ \  f\in C^{\infty}(M,\mathbb R).
\end{align*}
\end{defn}
\begin{rem}
By \lemref{pstar}, we know that $\Li=\frac{1}{2}\delta J(\delta J\Lie_{\g_{\omega}f}J)^{\flat}$. This formula was first studied by Vernier \cite{MR4174303}, who computed the principal term of $\Li$ and proved that $\Li$ is a 4th order elliptic operator.
\end{rem}

Since the Riemannian metric on $M$ is $J$ invariant,  we have
\begin{align*}
\la\Li(f_1), f_2\ra =\la P(f_1), P(f_2)\ra= \la JP(f_1), JP(f_2)\ra, \quad \forall f_1, f_2\in C^{\infty}(M,\R).
\end{align*}
So $\Li$ has an equavilent expression:
$$
\Li = (JP)^*JP.
$$

Its expression implies that  $\Li$ is a self-adjoint semi-positive operator, and we will see the explicit expression of Lichnerowicz operator in Section \ref{sect}. By \defref{myl1}, we have the following description for the kernel of $\Li$.
\begin{prop}\label{ker11}
Let $(M,\omega, J)$ be an almost K\"ahler manifold, then $\Li(f)=0$ iff $\mathcal L_{\g_{\omega}f}J=0$, i.e. the symplectic gradient   $\g_{\omega}f$ is holomorphic.
\end{prop}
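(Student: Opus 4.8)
The plan is to exploit the factorisation $\Li = P^*P$ recorded in \defref{myl1} together with the positivity of the $L^2$ inner product; the whole statement then reduces to the elementary fact that a self-adjoint semi-positive operator of the form $A^*A$ has the same kernel as $A$. First I would dispose of the easy direction: if $\mathcal L_{\g_{\omega}f}J = 0$, then $P(f) = \frac{1}{2}\mathcal L_{\g_{\omega}f}J = 0$ by \defref{P}, and hence $\Li(f) = P^*P(f) = P^*(0) = 0$.

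For the converse, suppose $\Li(f) = 0$. I would pair against $f$ itself in the $L^2$ inner product and use the adjointness built into \defref{Past}, computing
$$
0 = \la \Li(f), f\ra = \la P^*P(f), f\ra = \la P(f), P(f)\ra = \| P(f)\|^2 .
$$
Since $(M,\omega)$ is compact and $P(f)$ is a smooth section of $\End(TM)$, the vanishing of its $L^2$ norm forces $P(f) = 0$ identically. By \defref{P} this is exactly $\mathcal L_{\g_{\omega}f}J = 0$, which by definition says that the symplectic gradient $\g_{\omega}f$ is holomorphic.

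I expect no genuine obstacle here: the argument is a direct application of the semi-positive, self-adjoint structure $\Li = P^*P$ noted immediately before the statement, and the only point requiring (minor) care is the passage from $\|P(f)\|^2 = 0$ to the pointwise vanishing $P(f) = 0$. This relies on the non-degeneracy of the fibrewise metric $g$ and on compactness and smoothness, which together guarantee that the $L^2$ pairing is a bona fide norm on $\Gamma(\End(TM))$; everything else in the proof is purely formal.
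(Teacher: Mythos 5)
Your proposal is correct and is precisely the argument the paper has in mind: the paper states the proposition as an immediate consequence of $\Li = P^*P$ and the semi-positivity/self-adjointness noted just before it, which is exactly the computation $0=\la \Li(f),f\ra=\|P(f)\|^2$ forcing $P(f)=\tfrac{1}{2}\mathcal L_{\g_{\omega}f}J=0$. No issues.
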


In fact, \defref{myl1} is a natural generalisation of Lichnerowicz operator in K\"ahler case.
\begin{prop}\label{equiv1}
When $(M,\omega,J,g)$ is a K\"ahler manifold, \defref{myl1} coincides with the definition $$\Li=(\D^-d)^*\D^-d$$ in the K\"ahler case, where 
$
\D^-\alpha(X,Y)=\frac{1}{2}[(D_X\alpha)Y-(D_{JX}\alpha)(JY)], \forall \alpha\in\Omega^1(M),
$
is the $J$ anti-invariant part of $\D\alpha$.
\end{prop}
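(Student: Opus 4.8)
The plan is to show that the abstract definition $\Li = P^*P$ reduces, in the integrable (K\"ahler) case, to the classical operator $(\D^-d)^*\D^-d$. The strategy is to identify $P(f)$ explicitly as a first-order differential operator applied to $f$ and to recognize it, up to the identification between $T_J\ac$ and appropriate $J$-anti-invariant symmetric tensors, as $\D^- df$. Once $P$ is matched with $\D^-d$ (as maps into the same space with the same inner product), taking formal adjoints and composing gives the claim immediately, since $P^*P = (\D^- d)^* \D^- d$ by functoriality of the adjoint.

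First I would compute $P(f) = \tfrac12 \mathcal L_{\g_\omega f}J$ in terms of covariant derivatives. The key input is formula \eqref{213}, which gives $(\mathcal L_{\g_\omega f} g)(X,Y) = -g(J\mathcal L_{\g_\omega f}J X, Y)$, i.e. $JP(f) = -\tfrac12 (\mathcal L_{\g_\omega f}g)^\sharp$ under the metric identification of symmetric endomorphisms with symmetric $2$-tensors. Using the standard identity $(\mathcal L_Z g)(X,Y) = g(\D_X Z, Y) + g(X, \D_Y Z)$ with $Z = \g_\omega f = J\g f$, I would express $JP(f)$ through $\D(J\g f) = (\D J)\g f + J\,\D\g f$. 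In the K\"ahler case $\D J = 0$, so this simplifies to a symmetric tensor built purely from $\Hess f = \D df$. The point is that $v\in T_J\ac$ must anticommute with $J$ (be $J$-anti-invariant), so only the $J$-anti-invariant part survives, and this is exactly what the operator $\D^-$ extracts: $\D^- df(X,Y) = \tfrac12[(\D_X df)(Y) - (\D_{JX}df)(JY)]$. Thus I expect to obtain $P(f) = \D^- df$ (possibly up to a harmless identification factor that the normalization $1/2$ in \defref{P} is designed to absorb, per the remark following that definition).

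The step that needs genuine care is the bookkeeping of the identification between $T_J\ac \subset \End(TM)$ and the bundle of $J$-anti-invariant symmetric $2$-tensors, together with the musical isomorphisms $\sharp, \flat$, so that the $L^2$ inner products on the two sides agree. Because $\Li = P^*P$ and the target inner product on $T_J\ac$ is the one induced by $g$ (the same $g$ used to define the inner product on $1$-forms and symmetric tensors entering $(\D^-d)^*$), matching $P$ with $\D^- d$ \emph{as operators into a common inner-product space} is exactly what is required to conclude $P^*P = (\D^-d)^*\D^- d$. I would therefore verify explicitly that under the correspondence $v \leftrightarrow -\tfrac12 g(Jv\,\cdot,\cdot)$ the norms match, which is the main obstacle and the place where the factor $1/2$ is pinned down.

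Finally, I would assemble the argument: compute $P(f)$, invoke $\D J=0$ to drop the $(\D J)\g f$ term, extract the $J$-anti-invariant symmetric part to identify it with $\D^- df$, confirm the inner products correspond, and then take adjoints to conclude $\Li = P^*P = (\D^- d)^*\D^- d$. An alternative shortcut, if the direct tensor identification proves cumbersome, is to use the already-established formula $P^* v = \delta J(\delta Jv)^\flat$ from \lemref{pstar}: in the K\"ahler case $\D J = 0$ lets one commute $J$ through $\delta$ and rewrite $P^*P(f)$ directly as a second-order operator on $df$, then recognize the result as $(\D^- d)^*\D^- d$ via integration by parts. I expect the direct identification of $P$ with $\D^- d$ to be the cleaner route, with the inner-product normalization being the one subtle point to nail down.
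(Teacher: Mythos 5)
Your proposal is correct and follows essentially the same route as the paper: both reduce the claim to the pointwise identification of $\D^- df$ with $-(P(f))^{\flat}$ and then conclude by matching $L^2$ inner products and taking formal adjoints. The only (minor) difference is that you derive this identification directly from $\mathcal L_{\g_{\omega}f}\omega=0$ together with $\D J=0$, whereas the paper quotes Gauduchon's identity $\D^-df(X,Y)=-\tfrac{1}{2}g((J\mathcal L_{\g f}J)X,Y)$ and then uses integrability ($N=0$) to replace the Riemannian gradient by the symplectic one; both computations land on the same key equation.
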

\begin{proof}
According to (\cite{gauduchon2010calabi}, Lemma 1.23.2), it holds
\begin{align}\label{abc123}
\D^-df(X,Y)=-\frac{1}{2}g((J\mathcal L_{\g  f}J)X, Y),  \quad\forall f\in C^{\infty}(M,\mathbb R), \quad\forall X,Y\in\Gamma(TM).
\end{align}
The K\"ahler condition gives
$$
\Lie_{J\g f}J -J\mathcal L_{\g  f}J= 4N(\g f, )=0.
$$
Hence we have 
\begin{align}\label{abc1234}
\D^-df(X,Y)=-\frac{1}{2}g((\mathcal L_{J\g  f}J)X, Y)=-\frac{1}{2}g((\mathcal L_{\g_{\omega}  f}J)X, Y).
\end{align}
 It follows from (\ref{abc1234}) that
\begin{align*}
\la \D^-df_1, \D^-df_2\ra
=&\frac{1}{4}\la  \mathcal L_{\g_{\omega} f_1}J, \mathcal L_{\g_{\omega} f_2}J \ra=\la P(f_1), P(f_2)\ra.
\end{align*}
Taking adjoint, we obtain that
$$\la (\D^-d)^*\D^-df_1, f_2\ra=\la\Li(f_1), f_2\ra,$$ i.e. $\Li=(\D^-d)^*\D^-d$.
\end{proof}

%%%%%%%%%%%%%%%%%%%%%%%%%%%%%%%%%%%%%%%%%%%%%%%%%%%%%%%%%%%%%%%%%%%%%%%%%%%%%%%%%%%%%%%%%%%%%%%%%%%%%%%%%%%%%%%%%%%%

\subsection{Operator $\Lie_\K$}
Another important operator related to Lichnerowicz operator is the Lie derivative along $\K$.
\begin{defn}\label{lkop}
For any tensor field $T$  on $M$, we define the operator $\mathcal L_{\K}$ of  Lie derivative along $\K$, i.e. $\mathcal L_{\K}:T\to \mathcal L_{\K}T$.
\end{defn}
 The most important case is $T\in T_J\ac$ and $T$ is a function. When $T=f$ is a function, we have
$$
\mathcal L_{\K}f=(d^c\sch, df)=\{\sch, f\}.
$$

To further describe $\Lie_\K$ acting on functions, we consider the twisted Lichnerowicz operator $(JP)^*P$.  

In fact, we see that 
$(JP)^*P$ is an anti self-adjoint operator on functions. The proof is a direct computation.
It follows from (\ref{def:jpstar}) that
$$(JP)^*P(f)=-P^*JP(f)=-[(JP)^*P]^\ast(f).
$$

By definition $(JP)^*P$ seems to be a 4th order operator. However, it is half of the operator $\Lie_{\K}$. To prove this fact we need a lemma of  Garc\'ia-Prada and Salamon.
\begin{lem}[{\cite[ Remark 2.10]{MR4234988}}]\label{salamon}
For a closed connected symplectic $2m$-manifold $(M,\omega)$ , an almost complex structure $J\in\ac$, and two
Hamiltonian momentum functions $f, g: M \to\mathbb R$ we have
\begin{align}\label{sala0}
\la P(f_1), JP(f_2)\ra= \frac{1}{2}\la\sch,\{f_1, f_2\}\ra.
\end{align}
\end{lem}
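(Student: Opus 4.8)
The plan is to transpose the operator $JP$ across the $L^2$ pairing, reducing the identity to a single adjoint computation followed by an integration by parts for the Poisson bracket. Since $\la\cdot,\cdot\ra$ is symmetric and $P(f_1)\in T_J\ac$ by \lemref{lem25}, the definition of the adjoint $(JP)^*$ gives
\begin{align*}
\la P(f_1), JP(f_2)\ra=\la JP(f_2), P(f_1)\ra=\la f_2,(JP)^*P(f_1)\ra=\la (JP)^*P(f_1), f_2\ra .
\end{align*}
Thus it suffices to prove $(JP)^*P(f_1)=\tfrac12\{\sch, f_1\}$; once this is known the right-hand side is $\tfrac12\la\{\sch,f_1\},f_2\ra$, which I will convert into $\tfrac12\la\sch,\{f_1,f_2\}\ra$ by a Poisson-bracket integration by parts.

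For the central step I would invoke the first-variation formula \eqref{scalarvaria}, namely $D\sch(v)=-(JP)^*v$ for all $v\in T_J\ac$. Applied to $v=P(f_1)=\tfrac12\mathcal L_{\g_{\omega}f_1}J$, it yields $(JP)^*P(f_1)=-\tfrac12 D\sch(\mathcal L_{\g_{\omega}f_1}J)$. Now $\mathcal L_{\g_{\omega}f_1}J$ is the infinitesimal generator of the $\Ham$-action through the Hamiltonian $f_1$, and the Hermitian scalar curvature is a natural, diffeomorphism-equivariant invariant of $(\omega,J)$; hence its variation along this direction is the Lie derivative of $\sch$ along the symplectic gradient, $D\sch(\mathcal L_{\g_{\omega}f_1}J)=\g_{\omega}f_1(\sch)=\{f_1,\sch\}$. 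Therefore $(JP)^*P(f_1)=-\tfrac12\{f_1,\sch\}=\tfrac12\{\sch,f_1\}$, as required. Alternatively I can bypass equivariance: by \eqref{def:jpstar} and \lemref{pstar}, $(JP)^*P(f_1)=-P^*(JP(f_1))$, and inserting $P^*v=\delta J(\delta Jv)^{\flat}$ with $JP(f_1)=\tfrac12 J\mathcal L_{\g_{\omega}f_1}J$ lets the two factors of $J$ collapse via $J^2=-1$ to produce $\tfrac12\,\delta J(\delta\mathcal L_{\g_{\omega}f_1}J)^{\flat}$, which is precisely $-\tfrac12 D\sch(\mathcal L_{\g_{\omega}f_1}J)$ by Mohsen's formula \lemref{mohsen}.

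To conclude I would establish $\la\{\sch,f_1\},f_2\ra=\la\sch,\{f_1,f_2\}\ra$. Every Hamiltonian vector field preserves the Liouville volume $\vol=\omega^m/m!$, so $\int_M\{a,bc\}\vol=0$ for all functions, and expanding by the Leibniz rule gives the cyclic symmetry $\int_M\{a,b\}c\,\vol=\int_M\{b,c\}a\,\vol$. Taking $a=\sch$, $b=f_1$, $c=f_2$ turns $\la\{\sch,f_1\},f_2\ra$ into $\int_M\{f_1,f_2\}\sch\,\vol=\la\sch,\{f_1,f_2\}\ra$, finishing the argument.

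The main obstacle is the middle step: fixing the value $D\sch(\mathcal L_{\g_{\omega}f_1}J)=\{f_1,\sch\}$ with the correct sign. The equivariance route is conceptually transparent but demands care with the pushforward-versus-pullback convention in the action $\phi\cdot J=\phi_*J\phi_*^{-1}$, which reverses the sign of the generating Lie derivative; the Mohsen-formula route replaces this bookkeeping by a short but genuine tensorial computation. The remaining ingredients—the adjoint transposition and the Poisson-bracket integration by parts—are routine.
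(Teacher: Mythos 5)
Your argument is correct, and it is worth noting that the paper itself offers no proof of this lemma at all: it is imported verbatim from Garc\'ia-Prada--Salamon as a citation, and the paper then uses it to \emph{derive} $\mathcal L_{\K}=2(JP)^*P$ in \propref{lkprop}. You run the implication in the opposite direction: you first establish $(JP)^*P(f_1)=\tfrac12\{\sch,f_1\}$ independently, by combining the first-variation formula \eqref{scalarvaria} with the naturality of $\sch$ under the flow of $\g_{\omega}f_1$ (so that $D\sch(\mathcal L_{\g_{\omega}f_1}J)=\{f_1,\sch\}$), and then recover \eqref{sala0} by the adjoint transposition and the cyclic identity $\int_M\{a,b\}c\,\vol=\int_M\{b,c\}a\,\vol$. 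This is not circular, since \eqref{scalarvaria} rests only on Mohsen's formula and \lemref{pstar}, and the signs all check against \eqref{pjvari}. What your route buys is a self-contained verification of a fact the paper takes on faith; what it costs is the reliance on the equivariance $\sch_{\phi^*J}=\phi^*\sch_J$, which is standard but is nowhere stated in the paper and does deserve the one-line justification (naturality of $\D$, $\nabla$, $R^{\nabla}$ and of the contraction against $\omega=\phi^*\omega$) rather than just an appeal to ``naturalness.''

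One small caveat: your ``alternative'' route via \eqref{def:jpstar}, \lemref{pstar} and Mohsen's formula does not actually bypass the equivariance step. It only re-derives the identity $(JP)^*P(f_1)=-\tfrac12 D\sch(\mathcal L_{\g_{\omega}f_1}J)$, which is already immediate from \eqref{scalarvaria}; you are still left needing to evaluate $D\sch(\mathcal L_{\g_{\omega}f_1}J)$ as $\{f_1,\sch\}$, and for that the equivariance argument (or a direct tensor computation of $\delta J(\delta\mathcal L_{\g_{\omega}f_1}J)^{\flat}$ in the spirit of the paper's Appendix) is still required. Since your primary route supplies that evaluation, the proof stands; just do not present the second paragraph's computation as a genuine substitute for the first.
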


\begin{prop}\label{lkprop}
Let $(M,\omega,J)$ be an almost K\"ahler manifold, then for any $f$ we have
\begin{align}
\mathcal L_{\K}(f)=2(JP)^*P(f)=-2P^* JP(f)=-\mathcal L_{\K}^\ast(f).
\end{align}
In particular, $\Lie_\K$ is  anti-self-adjoint.
\end{prop}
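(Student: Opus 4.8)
The plan is to prove the first identity $\mathcal{L}_{\K}(f)=2(JP)^{*}P(f)$ by testing both sides against an arbitrary $h\in C^{\infty}(M,\R)$ under the $L^{2}$-pairing; once this is in hand, the equality $2(JP)^{*}P=-2P^{*}JP$ is immediate from \eqref{def:jpstar}, and the anti-self-adjointness is a formal consequence. For the right-hand side, moving $JP$ across the inner product gives $\la 2(JP)^{*}P(f),h\ra=2\la P(f),JP(h)\ra$. Now \lemref{salamon} applied with $f_{1}=f$ and $f_{2}=h$ yields $2\la P(f),JP(h)\ra=\la \sch,\{f,h\}\ra$. Thus the right-hand side, paired with $h$, equals $\la \sch,\{f,h\}\ra$ for every $h$.

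For the left-hand side, recall from \defref{lkop} (and the displayed formula following it) that $\K=\g_{\omega}\sch$ acts on functions by $\mathcal{L}_{\K}f=\{\sch,f\}$, so that $\la \mathcal{L}_{\K}f,h\ra=\la\{\sch,f\},h\ra$. The crux is then a single integration-by-parts identity for the Poisson bracket, namely $\la\{\sch,f\},h\ra=\la \sch,\{f,h\}\ra$. To see this, observe that every symplectic gradient is divergence-free: from $\mathcal{L}_{\g_{\omega}g}\omega=d\,\iota_{\g_{\omega}g}\omega=0$ we obtain $\mathcal{L}_{\g_{\omega}g}\vol=0$, hence $\di(\g_{\omega}g)=0$ for all $g$. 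Writing $\{f,h\}=\g_{\omega}f(h)$ and integrating by parts on the closed manifold $M$ moves the vector field off $h$ onto $\sch$ with no boundary term; the antisymmetry of the bracket then rearranges the outcome into $\la\{\sch,f\},h\ra$. Comparing the two computations gives $\la 2(JP)^{*}P(f),h\ra=\la\mathcal{L}_{\K}f,h\ra$ for all $h$, whence $\mathcal{L}_{\K}f=2(JP)^{*}P(f)$.

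It remains only to collect the formal consequences. The identity $2(JP)^{*}P=-2P^{*}JP$ is a direct substitution of \eqref{def:jpstar}, and $[(JP)^{*}P]^{*}=P^{*}(JP)=-(JP)^{*}P$, again by \eqref{def:jpstar}, exhibits $(JP)^{*}P$ as anti-self-adjoint; hence so is $\mathcal{L}_{\K}=2(JP)^{*}P$, which is precisely $\mathcal{L}_{\K}(f)=-\mathcal{L}_{\K}^{*}(f)$.

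The one step demanding genuine care is the Poisson-bracket integration by parts, since it is exactly the divergence-freeness of Hamiltonian vector fields that collapses the \emph{apparently} fourth-order operator $(JP)^{*}P$ (a composition of the second-order $P$ with the second-order $(JP)^{*}=-P^{*}J$) into the first-order operator $\tfrac{1}{2}\mathcal{L}_{\K}$. One should therefore be sure that the by-parts manipulation is justified by compactness of $M$ and that no lower-order contributions survive beyond those accounted for by the vanishing divergence and the antisymmetry of the bracket.
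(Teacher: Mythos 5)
Your proof is correct and follows essentially the same route as the paper: both arguments rest on Lemma~\ref{salamon} together with the adjointness identity $\la\{\sch,f\},h\ra=\la\sch,\{f,h\}\ra$, which you establish via the Liouville property $\di(\g_{\omega}f)=0$ while the paper obtains the same fact through the $\delta$-calculus computation $\delta(\sch\,d^cf)=\Lie_{\K}f$ using Lemma~\ref{lem36} (itself equivalent to $\di(\g_{\omega}f)=0$ since $(d^cf)^{\sharp}=\g_{\omega}f$). The formal consequences you draw from \eqref{def:jpstar} match the paper's as well.
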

\begin{proof}
First we see
\begin{align*}
\la\sch,\{f_1, f_2\}\ra= \la \sch, (d^cf_1, df_2)\ra 
= \la \sch d^cf_1, df_2\ra 
= \la \delta (\sch d^cf_1),f_2 \ra .
\end{align*}
Using $\delta d^cf_1=0$ from \lemref{lem36}, we get
\begin{align}
\delta (\sch d^cf_1)=-(d\sch, d^cf_1)+\sch\delta d^cf_1=(d^c\sch, d f_1)=\mathcal L_{\K}f_1.
\end{align}
So we have
\begin{align*}
\la\sch,\{f_1, f_2\}\ra
= \la\mathcal L_{\K}f_1,f_2 \ra .
\end{align*}
It then follows from (\ref{sala0}) that $\mathcal L_{\K}= 2(JP)^*P= -2P^*JP$.
\end{proof}

\begin{rem}\label{lkequiv}
When $(M,\omega, J)$ is K\"ahler, Gauduchon(\cite[Lemma 1.23.5]{gauduchon2010calabi}) proved that
$$
2\delta\delta\D^-d^cf=-\Lie_{\K}f,
$$
where the $\delta$ operator is the formal adjoint of $\D$ as we defined in (\ref{delta}),  and after taking two successive $\delta $ operation, the (0,2)-tensor $\D^-d^cf$ becomes a function.

In fact, \propref{lkprop} is a generalisation of Gauduchon's result on almost K\"ahler manifolds. 
By definition, it holds
\begin{align*}
\la\delta\delta\D^-d^cf_1, f_2\ra=\la \delta\D^-d^cf_1, df_2\ra
=\la  \D^-d^cf_1, \D df_2\ra.
\end{align*}
Due to the fact $ \D df_2= \D^- df_2+ \D^+ df_2$ and $\la  \D^-d^cf_1, \D^+ df_2\ra=0$, we have
\begin{align*}
\la\delta\delta\D^-d^cf_1, f_2\ra=\la  \D^-d^cf_1, \D^-df_2\ra=\la  J\D^-df_1, \D^-df_2\ra.
\end{align*}

It follows from (\ref{abc1234}) that $$\la  J\D^-df_1, \D^-df_2\ra=\la JP(f_1),  P(f_2)\ra.$$ 
So, the anti self-adjointness of $(JP)^*P$ leads to
\begin{align*}
\la\delta\delta\D^-d^cf_1, f_2\ra
=\la P^*JP(f_1), f_2 \ra=-\la (JP)^*P(f_1), f_2\ra.
\end{align*}
\end{rem}

Now we study  the action of $\Lie_\K$ on $T_J\ac$.

\begin{lem}\label{lemma:anti-lk}
When acting on $T_J\ac$, $\Lie_\K$ is an anti-self-adjoint operator.
\end{lem}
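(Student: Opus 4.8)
The plan is to prove the bilinear statement that $\la \Lie_\K u,v\ra=-\la u,\Lie_\K v\ra$ for all $u,v\in T_J\ac$, where $\la\cdot,\cdot\ra$ is the $L^2$ pairing of endomorphisms $\la A,B\ra=\int_M\tr(A^\ast B)\,\vol$. One should first note that $\Lie_\K$ does \emph{not} preserve $T_J\ac$ unless $J$ is EAK, so by ``$\Lie_\K$ acting on $T_J\ac$'' I read $\Pi\circ\Lie_\K$ with $\Pi$ the orthogonal projection onto $T_J\ac$; since $v\in T_J\ac$ one has $\la \Lie_\K u,v\ra=\la\Pi\Lie_\K u,v\ra$, so the bilinear form is insensitive to this ambiguity and the statement is well posed. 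The single geometric input I would use is that $\K=\g_\omega\sch$ is Hamiltonian, hence symplectic: from $\iota_\K\omega=-d\sch$ we get $\Lie_\K\omega=d\iota_\K\omega=0$, whence $\Lie_\K\vol=0$ and $\di\K=0$. Consequently, for every $\phi\in C^\infty(M)$, Stokes' theorem on the closed manifold $M$ gives $\int_M(\Lie_\K\phi)\,\vol=\int_M d\bigl(\iota_\K(\phi\,\vol)\bigr)=0$.

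Next I would apply this vanishing to the specific function $\phi=\tr(uv)$. For self-adjoint $u,v\in T_J\ac$ the fibrewise metric on $\End(TM)$ is $g_J(u,v)=\tr(u^\ast v)=\tr(uv)$, so $\phi$ is exactly the pointwise integrand of $\la u,v\ra$. Because the Lie derivative commutes with contraction and acts as a derivation on the composition of endomorphisms (as pullback by the flow of $\K$ respects composition), one has $\Lie_\K\tr(uv)=\tr\bigl((\Lie_\K u)v\bigr)+\tr\bigl(u(\Lie_\K v)\bigr)$. Integrating against $\vol$ and invoking the previous step yields $\int_M\tr\bigl((\Lie_\K u)v\bigr)\,\vol+\int_M\tr\bigl(u(\Lie_\K v)\bigr)\,\vol=0$.

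The final step, which is the only genuinely delicate point, is to identify each trace integral with the $L^2$ inner product. Here I must account for the fact that $\Lie_\K u$ is neither self-adjoint nor anti-commuting with $J$ in general. Decomposing $\Lie_\K u=A_s+A_a$ into its fibrewise symmetric and skew-symmetric parts, the skew part pairs trivially against the symmetric $v$: using cyclicity and $\tr(C)=\tr(C^\ast)$ one computes $\tr(A_a v)=\tr\bigl((A_a v)^\ast\bigr)=\tr\bigl(v\,A_a^\ast\bigr)=-\tr(A_a v)$, so $\tr(A_a v)=0$. Hence $\tr\bigl((\Lie_\K u)v\bigr)=\tr\bigl((\Lie_\K u)^\ast v\bigr)=g_J(\Lie_\K u,v)$, and likewise $\tr\bigl(u(\Lie_\K v)\bigr)=g_J(u,\Lie_\K v)$. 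The displayed identity therefore becomes $\la \Lie_\K u,v\ra+\la u,\Lie_\K v\ra=0$, which is the asserted anti-self-adjointness. (One can further observe that the $J$-commuting part of $A_s$ also drops, since $vJ=-Jv$ gives $\tr(A_s v)=\tr\bigl((J^{-1}A_s J)(J^{-1}vJ)\bigr)=-\tr(A_s v)$ whenever $A_s J=JA_s$, which is why pairing against $v$ effectively sees only $\Pi\Lie_\K u$.)

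The main obstacle is precisely this bookkeeping in the last step: because $\Lie_\K$ fails to map $T_J\ac$ into itself, one cannot directly appeal to symmetry of the pairing, and it is the symmetric/skew decomposition combined with $\di\K=0$ that closes the argument. Beyond this I anticipate no analytic difficulty, compactness of $M$ supplying all the integrations by parts, and I would present the computation so that it visibly specialises to \propref{lkprop} (the anti-self-adjointness of $\Lie_\K$ on functions) under the identification used there.
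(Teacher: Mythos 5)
Your proof is correct and follows essentially the same route as the paper's: both rest on the Leibniz rule $\Lie_\K\tr(uv)=\tr\bigl((\Lie_\K u)v\bigr)+\tr\bigl(u\,\Lie_\K v\bigr)$ together with the vanishing of $\int_M(\Lie_\K\phi)\vol$ for every function $\phi$, and your derivation of that vanishing from $\di\K=0$ is the same fact as the paper's appeal to $\delta d^c\sch=0$, since $\K^{\flat}=d^c\sch$. Your additional step identifying $\tr\bigl((\Lie_\K u)v\bigr)$ with the fibrewise inner product of $\Lie_\K u$ and $v$ via the symmetric/skew decomposition is a legitimate clarification of a point the paper passes over silently.
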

\begin{proof}
From (\ref{tangent}), we know that any $u\in T_J\ac$ is symmetric. The metric on $T_J\ac$ could also be written as (see also \cite[(9.2.10)]{gauduchon2010calabi})
$$
\la u, v\ra=\int_M\tr(uv)\vol,
$$
where $uv=u\circ v$ denote the composition of $u,v\in \en(TM)$ and $\tr$ is the trace operation. Since trace operation commutes with Lie derivative(for example, see \cite[Exercise 2.5.10]{MR3469435}), 
we have
\begin{align*}
\la \mathcal L_{\K}u,v\ra=&\int_M\tr( (\mathcal L_{\K}u)v)=\int_M\tr( \mathcal L_{\K}(uv)-u \mathcal L_{\K}v)
=\int_M\mathcal L_{\K}\tr (uv)-\tr(u \mathcal L_{\K}v).
\end{align*}
But \lemref{lem36} implies $\delta d^c\sch=0$,  we have
\begin{align*}
\int_M\mathcal L_{\K}\tr(uv)=\la d^c\sch, d(\tr(uv))\ra=\int_M-\delta(d^c\sch\tr (uv))=0.
\end{align*}

We obtain
\begin{align*}
\la \mathcal L_{\K}u,v\ra=\int_M-\tr(u \mathcal L_{\K}v)=-\la u,\mathcal L_{\K}v\ra.
\end{align*}
Thus $ \mathcal L_{\K}$ is anti self-adjoint.  

\end{proof}

If $J$ is EAK, the operator $\Lie_\K$ have the following commutative relation.
\begin{lem}\label{exal}
If $J$ is EAK, then $\mathcal L_{\K}$ commutes with $P,P^*,JP, (JP)^*, \Li$.
\end{lem}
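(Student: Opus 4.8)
The plan is to reduce everything to the single geometric fact that, for an EAK metric, the extremal vector field $\K=\g_{\omega}\sch$ is simultaneously Hamiltonian and holomorphic. By \corref{1stvaria}, the EAK condition is equivalent to $\mathcal L_{\K}J=0$; since $\K$ is Hamiltonian we also have $\mathcal L_{\K}\omega=0$, and combining these with $g=\omega(\cdot,J\cdot)$ gives $\mathcal L_{\K}g=0$. Thus $\K$ is a Killing field preserving $J$, $\omega$ and $g$ simultaneously, and this is the only input about the EAK condition that I will use.

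First I would establish the commutation with $P$, which is the crux. Writing $P(f)=\frac{1}{2}\mathcal L_{\g_{\omega}f}J$ and using the bracket identity $[\mathcal L_X,\mathcal L_Y]=\mathcal L_{[X,Y]}$ together with $\mathcal L_{\K}J=0$, one gets
\begin{align*}
\mathcal L_{\K}P(f)=\frac{1}{2}\mathcal L_{\K}\mathcal L_{\g_{\omega}f}J=\frac{1}{2}\mathcal L_{[\K,\g_{\omega}f]}J.
\end{align*}
The Lie-algebra homomorphism $f\mapsto\g_{\omega}f$ gives $[\K,\g_{\omega}f]=[\g_{\omega}\sch,\g_{\omega}f]=\g_{\omega}\{\sch,f\}=\g_{\omega}(\mathcal L_{\K}f)$, so $\mathcal L_{\K}P(f)=P(\mathcal L_{\K}f)$. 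Commutation with $JP$ follows at once from the Leibniz rule for the Lie derivative applied to the composition $J\circ P(f)$ and the vanishing $\mathcal L_{\K}J=0$, namely $\mathcal L_{\K}(JP(f))=(\mathcal L_{\K}J)P(f)+J\,\mathcal L_{\K}P(f)=JP(\mathcal L_{\K}f)$.

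Next I would handle the adjoints by a pure duality argument, for which I first check that $\mathcal L_{\K}$ preserves $T_J\ac$: Lie-differentiating the defining relations $vJ+Jv=0$ and the symmetry of $g(v\cdot,\cdot)$, and using $\mathcal L_{\K}J=0$ and $\mathcal L_{\K}g=0$, shows $\mathcal L_{\K}v\in T_J\ac$. Then for $v\in T_J\ac$ and $f\in C^{\infty}(M,\R)$, combining the anti-self-adjointness of $\mathcal L_{\K}$ on functions (\propref{lkprop}) and on $T_J\ac$ (\lemref{lemma:anti-lk}) with the commutation $\mathcal L_{\K}P=P\mathcal L_{\K}$ gives
\begin{align*}
\la \mathcal L_{\K}P^*v, f\ra=-\la P^*v, \mathcal L_{\K}f\ra=-\la v, \mathcal L_{\K}P(f)\ra=\la \mathcal L_{\K}v, P(f)\ra=\la P^*\mathcal L_{\K}v, f\ra,
\end{align*}
so $\mathcal L_{\K}P^*=P^*\mathcal L_{\K}$. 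The identity $(JP)^*=-P^*J$ from \eqref{def:jpstar}, combined once more with $\mathcal L_{\K}J=0$, yields commutation with $(JP)^*$, and finally $\Li=P^*P$ gives commutation with $\Li$ by composing the two established relations.

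The computations are all routine once the bracket identity and the duality setup are in place; the only point genuinely requiring care is the verification that $\mathcal L_{\K}$ maps $T_J\ac$ into itself, since the anti-self-adjointness statements of \propref{lkprop} and \lemref{lemma:anti-lk} are precisely what permit moving the adjoint operators across the $L^2$-pairing. I expect no real obstacle beyond this bookkeeping — the essential content is entirely contained in the EAK condition via \corref{1stvaria} and the Hamiltonian (hence Killing) nature of $\K$.
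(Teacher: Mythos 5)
Your proof is correct and follows essentially the same route as the paper: the bracket identity $[\mathcal L_{\K},\mathcal L_{\g_{\omega}f}]=\mathcal L_{[\K,\g_{\omega}f]}$ together with $\mathcal L_{\K}J=0$ and the Poisson-bracket homomorphism gives $\mathcal L_{\K}P=P\mathcal L_{\K}$, the Leibniz rule handles $JP$, and the adjoints follow by the same duality argument via \propref{lkprop} and \lemref{lemma:anti-lk}. Your extra check that $\mathcal L_{\K}$ preserves $T_J\ac$ and your use of $(JP)^*=-P^*J$ instead of a second direct duality computation are only cosmetic differences.
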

\begin{proof}
Since $J$ is EAK, we have
$\mathcal L_{\K}J=0$, which implies
\begin{align}\label{commute0}
\mathcal L_{\K}(P(f))= \frac{1}{2} \mathcal L_{\K}\mathcal L_{\g_{\omega}f }J=\frac{1}{2} (\mathcal L_{\K}\mathcal L_{\g_{\omega}f }-\mathcal L_{\g_{\omega}f }\mathcal L_{\K})J=\frac{1}{2} \mathcal L_{[\K,\g_{\omega}f ]}J.
\end{align}
The Poisson bracket satisfies 
\begin{align}\label{commute1}
[\K,\g_{\omega}f ]=[\g_{\omega}s^{\nabla}_J,\g_{\omega}f ]=\g_{\omega}\{s^{\nabla}_J,f \}=\g_{\omega}\Lie_{\K}f.
\end{align}
Combining (\ref{commute0}) and (\ref{commute1}) together, we arrive at
\begin{align}\label{commute5}
\mathcal L_{\K}P(f)=P\mathcal L_{\K}(f).
\end{align}

It follows from (\ref{commute5}) and \lemref{lemma:anti-lk} that
\begin{align*}
\la f, \Lie_{\K} P^*v\ra= -\la  P\Lie_{\K} f, v\ra=-\la  \Lie_{\K} Pf, v\ra
=
\la  Pf,  \Lie_{\K}v\ra
=
\la f,   P^*\Lie_{\K}v\ra,
\end{align*}
for any $v\in\en(TM), f\in C^{\infty}(M,\R)$, i.e. 
\begin{align}
\label{commute6}
\mathcal L_{\K}P^*=P^*\mathcal L_{\K}.
\end{align}

Since
$$
\mathcal L_{\K}(JP(f))=(\mathcal L_{\K}J)P(f)+  J\mathcal L_{\K}P(f),
$$
making use of the EAK condition $\mathcal L_{\K}J=0$ and (\ref{commute5}), we get
\begin{align}\label{commute7}
\mathcal L_{\K}JP(f)=JP(\mathcal L_{\K}f).
\end{align}

Applying \lemref{lemma:anti-lk} and \eqref{commute7}, we obtain that 
\begin{align*}
\la f, \Lie_{\K} (JP)^*v\ra= -\la J P\Lie_{\K} f, v\ra=-\la  \Lie_{\K}J Pf, v\ra
=
\la  JPf,  \Lie_{\K}v\ra
=
\la f,   (JP)^*\Lie_{\K}v\ra,
\end{align*}
i.e.
\begin{align}
\label{commute8}
\mathcal L_{\K}(JP)^*=(JP)^*\mathcal L_{\K}.
\end{align}

Since $\Li=PP^* $, it follows from (\ref{commute5}) and (\ref{commute6}) that $\mathcal L_{\K}$ commutes with $ \Li$. This completes the proof.
\end{proof}

\begin{lem}\label{jlk}
 When $J$ is EAK, the operator $J\mathcal L_{\K}$ is self-adjoint on $T_J\ac$ and semi-positive on $\IM P$ and $\IM JP$.
\end{lem}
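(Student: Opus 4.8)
The plan is to derive both assertions from three facts already in hand: the anti-self-adjointness of $\mathcal{L}_{\K}$ on $T_J\ac$ (\lemref{lemma:anti-lk}), the commutation relations of \lemref{exal}, and the identities $\mathcal{L}_{\K}=2(JP)^*P=-2P^*JP$ of \propref{lkprop}, supplemented by two elementary properties of the complex structure $\mathbb{J}$ on $T_J\ac$. First I would record that $\mathbb{J}$ is skew-adjoint: since $\la u,v\ra=\int_M\tr(uv)\vol$ and every $u\in T_J\ac$ anticommutes with $J$ (so $Ju=-uJ$), cyclicity of the trace gives $\tr((Ju)v)=\tr(Juv)=-\tr(uJv)=-\tr(u(Jv))$, whence
\begin{align*}
\la Ju,v\ra=-\la u,Jv\ra,\qquad \forall u,v\in T_J\ac.
\end{align*}

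For self-adjointness of $J\mathcal{L}_{\K}$, the EAK condition $\mathcal{L}_{\K}J=0$ together with the derivation property of $\mathcal{L}_{\K}$ on compositions of endomorphisms yields $\mathcal{L}_{\K}(Jv)=(\mathcal{L}_{\K}J)v+J\mathcal{L}_{\K}v=J\mathcal{L}_{\K}v$ for all $v\in T_J\ac$, i.e.\ $\mathcal{L}_{\K}$ commutes with $\mathbb{J}$. Combining the skew-adjointness of $\mathbb{J}$, the anti-self-adjointness of $\mathcal{L}_{\K}$ (\lemref{lemma:anti-lk}), and this commutation,
\begin{align*}
\la J\mathcal{L}_{\K}u,v\ra=-\la \mathcal{L}_{\K}u,Jv\ra=\la u,\mathcal{L}_{\K}(Jv)\ra=\la u,J\mathcal{L}_{\K}v\ra,
\end{align*}
so $J\mathcal{L}_{\K}$ is self-adjoint on $T_J\ac$.

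For semi-positivity I would treat the two subspaces separately and collapse each to a manifest norm-square. On $\IM P$, taking $u=P(f)$ and using $\mathcal{L}_{\K}P=P\mathcal{L}_{\K}$ from \lemref{exal} gives $J\mathcal{L}_{\K}P(f)=JP(\mathcal{L}_{\K}f)$, and then by the adjoint relation for $JP$ and $(JP)^*P=\frac{1}{2}\mathcal{L}_{\K}$ (\propref{lkprop}),
\begin{align*}
\la J\mathcal{L}_{\K}P(f),P(f)\ra=\la \mathcal{L}_{\K}f,(JP)^*P(f)\ra=\frac{1}{2}\la \mathcal{L}_{\K}f,\mathcal{L}_{\K}f\ra\geq 0.
\end{align*}
On $\IM JP$, taking $u=JP(f)$ and using $\mathcal{L}_{\K}JP=JP\mathcal{L}_{\K}$ (\lemref{exal}) together with $\mathbb{J}^2=-1$ gives $J\mathcal{L}_{\K}JP(f)=J\cdot JP(\mathcal{L}_{\K}f)=-P(\mathcal{L}_{\K}f)$, so by $P^*JP=-\frac{1}{2}\mathcal{L}_{\K}$ (\propref{lkprop}),
\begin{align*}
\la J\mathcal{L}_{\K}JP(f),JP(f)\ra=-\la \mathcal{L}_{\K}f,P^*JP(f)\ra=\frac{1}{2}\la \mathcal{L}_{\K}f,\mathcal{L}_{\K}f\ra\geq 0.
\end{align*}

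The only genuinely new inputs are the skew-adjointness of $\mathbb{J}$ and its commutation with $\mathcal{L}_{\K}$; the rest is assembly of the established machinery. I expect the main point requiring care to be sign discipline, namely keeping straight that $\mathbb{J}$ is skew-adjoint while $\mathcal{L}_{\K}$ is anti-self-adjoint, and correctly invoking $\mathbb{J}^2=-1$ in the $\IM JP$ computation; once these are handled, each semi-positivity statement reduces to $\frac{1}{2}\la\mathcal{L}_{\K}f,\mathcal{L}_{\K}f\ra\geq 0$, so no analytic difficulty remains.
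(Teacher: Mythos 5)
Your proof is correct and follows essentially the same route as the paper: the same three-step chain (skew-adjointness of $\mathbb J$, anti-self-adjointness of $\mathcal L_{\K}$ from \lemref{lemma:anti-lk}, and $\mathcal L_{\K}J=0$) for self-adjointness, and the same reduction of each quadratic form to $\frac{1}{2}\la\mathcal L_{\K}f,\mathcal L_{\K}f\ra$ via \lemref{exal} and \propref{lkprop}. The only difference is that you make explicit the skew-adjointness of $\mathbb J$, which the paper uses implicitly.
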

\begin{proof}
According to \lemref{lemma:anti-lk} and the EAK condition $\mathcal L_{\K}J=0$, we have
\begin{align*}
\la J\mathcal L_{\K}u,v\ra=-\la \mathcal L_{\K}u,Jv\ra=\la u,\mathcal L_{\K}(Jv)\ra=\la u,J\mathcal L_{\K}v\ra,
\end{align*}
i.e. $ J\mathcal L_{\K}$ is self-adjoint. Choosing any  $v=P(\phi)\in \IM P$, Proposition \ref{lkprop} and the commutative relation in \lemref{exal} gives
$$
\la J\mathcal L_{\K}v,v\ra= \la J\mathcal L_{\K}(P(\phi) ,P(\phi) \ra 
= \la J P \mathcal L_{\K}\phi   ,P(\phi) \ra=\frac{1}{2}\la\mathcal L_{\K}\phi  ,\mathcal L_{\K}\phi \ra
\geq 0.
$$
Taking $v=JP(\phi)$, we conclude that
$$
\la J\mathcal L_{\K}v,v\ra= \la J\mathcal L_{\K}(JP(\phi)) ,JP(\phi) \ra 
= -\la  P \mathcal L_{\K}\phi   ,JP(\phi) \ra=\frac{1}{2}\la\mathcal L_{\K}\phi  ,\mathcal L_{\K}\phi \ra\geq 0.
$$
\end{proof}

\subsection{Calabi operators $\Li^{\pm}$}
Considering the $(0, 1)$ and $(1, 0)$ part of $P$,
\begin{align}
P^{+}=\frac{1}{2}(P- iJP),\ \ 
\
 P^{-}=\frac{1}{2}(P+ iJP),
\end{align}
we can define Calabi operators on almost K\"ahler manifolds.
\begin{defn}\label{cala12}
On an almost K\"ahler manifold $(M,\omega,J)$, the Calabi operators are defined by
\begin{align*}
\Li^+(f)=2(P^+)^*P^+ f,\ \ \  \Li^-(f)=2(P^-)^*P^- f.
\end{align*}

If we extend $P$ and $P^*$ to the space of complex function on $M$, and consider the Hermitian $L^2$ inner products on $C^{\infty}(M,\mathbb C)$ and $T_J\ac\otimes\mathbb C$, then both $\Li^{\pm}$ are all self-adjoint semi-positive operators.
\end{defn}
\begin{prop}\label{cala13}
By the definition of\  $\Li$ and $\mathcal L_{\K}$, we obtain equivalent expressions of Calabi operators
\begin{align}
\Li^+= \Li +\frac{i}{2}\mathcal L_{\K},\quad  \Li^-= \Li-\frac{i}{2}\mathcal L_{\K}.
\end{align}
\end{prop}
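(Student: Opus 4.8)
The plan is to expand $2(P^{\pm})^*P^{\pm}$ directly from \defref{cala12} and to reduce everything to three building blocks already available: the two expressions $\Li=P^*P=(JP)^*JP$ for the Lichnerowicz operator, and the two expressions $\mathcal L_{\K}=2(JP)^*P=-2P^*JP$ furnished by \propref{lkprop}.

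First I would form the adjoints of $P^{+}=\frac12(P-iJP)$ and $P^{-}=\frac12(P+iJP)$. Since we have passed to the \emph{Hermitian} $L^2$ inner products on $C^{\infty}(M,\mathbb C)$ and $T_J\ac\otimes\mathbb C$, taking the adjoint conjugates the scalar $i$, so that
\begin{align*}
(P^{+})^*=\tfrac12\big(P^*+i(JP)^*\big),\qquad (P^{-})^*=\tfrac12\big(P^*-i(JP)^*\big).
\end{align*}
This is the one place where the complex, rather than real, structure of the inner product intervenes, and correctly tracking the conjugation of $i$ is the only genuine subtlety in the argument.

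Next I would expand the composition. For $\Li^{+}$ one gets
\begin{align*}
\Li^{+}=2(P^{+})^*P^{+}
=\tfrac12\big(P^*+i(JP)^*\big)\big(P-iJP\big)
=\tfrac12\big(P^*P+(JP)^*JP\big)+\tfrac{i}{2}\big((JP)^*P-P^*JP\big).
\end{align*}
The two diagonal terms each equal $\Li$, so together they contribute $\Li$; for the off-diagonal terms I substitute $(JP)^*P=\frac12\mathcal L_{\K}$ and $P^*JP=-\frac12\mathcal L_{\K}$, so that $(JP)^*P-P^*JP=\mathcal L_{\K}$ and the imaginary part becomes $\frac{i}{2}\mathcal L_{\K}$. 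This gives $\Li^{+}=\Li+\frac{i}{2}\mathcal L_{\K}$. The computation for $\Li^{-}$ is identical after replacing every occurrence of $i$ by $-i$, yielding $\Li^{-}=\Li-\frac{i}{2}\mathcal L_{\K}$.

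I do not expect a real obstacle: once the Hermitian adjoints are correctly formed, the claim is pure bookkeeping in the identities $\Li=P^*P=(JP)^*JP$ and $\mathcal L_{\K}=2(JP)^*P=-2P^*JP$. The only delicate point is the sign produced by conjugating $i$ under the adjoint, combined with the sign conventions $(JP)^*=-P^*J$ and $\mathcal L_{\K}=-2P^*JP$; a slip there would merely interchange the roles of $\Li^{+}$ and $\Li^{-}$.
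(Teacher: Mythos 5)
Your proposal is correct and follows exactly the paper's own argument: form $(P^{+})^{*}=\tfrac12(P^{*}+i(JP)^{*})$ under the Hermitian $L^2$ product, expand $2(P^{+})^{*}P^{+}$, and identify the diagonal terms with $\Li=P^{*}P=(JP)^{*}JP$ and the off-diagonal terms with $\mathcal L_{\K}=2(JP)^{*}P=-2P^{*}JP$ from \propref{lkprop}. The sign bookkeeping you flag is handled the same way in the paper, so there is nothing to add.
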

\begin{proof}
Direct computation shows
\begin{align*}
\Li^+(f)=&2(P^+)^*P^+(f)\\
=&\frac{1}{2}(P^*+ i(JP)^*)(P- iJP)\\
=&\frac{1}{2}[P^*P+ (JP)^*JP +i(JP)^*P- iP^*JP]\\
=&\Li +\frac{i}{2}\mathcal L_{\K}.
\end{align*}
Similarly, we can obtain the expression of  $\Li^-$.
\end{proof}

\begin{rem}
In the K\"ahler case, the most commonly used version of Lichnerowicz operator(\cite[Equation(1.2)]{MR780039}, \cite[Corollary 1]{MR1274118}) is 
\begin{align}
\psi\to (\overline{\partial}\partial^{\sharp})^*(\overline{\partial}\partial^{\sharp})\psi,
\end{align}
where $\partial^{\sharp}\psi=(\overline\partial\psi)^{\sharp}$. In fact, $ (\overline{\partial}\partial^{\sharp})^*(\overline{\partial}\partial^{\sharp})$ is half of $\Li^+$. Since $\D^{0,1}(\bar\partial\psi)\in \Omega^{0,1}(M)\otimes \Omega^{0,1}(M)$, we have
$$
\D^{0,1}(\bar\partial\psi)=\D^{-}\bar\partial\psi.
$$
K\"ahler condition implies $\D g=\D J=0$. Thus $\D^{1,0}$ commutes with flatten operator and
\begin{align*}
\la \bar\partial(\partial^{\sharp}\psi_1), \bar\partial(\partial^{\sharp}\psi_2)\ra
=&
\la \D^{0,1}(\partial^{\sharp}\psi_1),\D^{0,1}(\partial^{\sharp}\psi_2)\ra\\
=&
\la \D^{0,1}(\bar\partial\psi_1),\D^{0,1}(\bar\partial\psi_2)\ra \\
=&
\la \D^{-}(\bar\partial\psi_1),\D^{-}(\bar\partial\psi_2)\ra.
\end{align*}
By the relation
$$
\bar\partial = \frac{1}{2}(d-id^c),
$$
and \propref{equiv1}, \remref{lkequiv},  we have
\begin{align*}
\la \bar\partial(\partial^{\sharp}\psi_1), \bar\partial(\partial^{\sharp}\psi_2)\ra
=&
\frac{1}{4}\la \D^{-}d\psi_1-i\D^{-}d^c\psi_1,\D^{-}d\psi_2-i\D^{-}d^c\psi_2\ra
\\
=&\frac{1}{2}(
\la \D^{-}d\psi_1 ,\D^{-}d\psi_2 \ra
-i\la  \D^{-}d^c\psi_1,\D^{-}d\psi_2 \ra)
\\
=&\frac{1}{2}(
\la\Li\psi_1 , \psi_2 \ra
-i\la  \frac{\Lie_{\K}}{2}\psi_1, \psi_2 \ra),
\end{align*}
i.e.
$$
(\overline{\partial}\partial^{\sharp})^*(\overline{\partial}\partial^{\sharp})=\frac{1}{2}(\Li-\frac{i}{2}\Lie_{\K}).
$$
\end{rem}

\begin{prop}\label{kercala}
For any $F=\varphi+i\phi\in C^{\infty}(M,\mathbb C)$, we have $F\in \ker\Li^+$ if and only if
$$
\Li \varphi-\frac{1}{2}\mathcal L_{\K}\phi=0,\ \  \Li \phi+\frac{1}{2}\mathcal L_{\K}\varphi=0.
$$
And  $F\in \ker\Li^-$ if and only if
$$
\Li \varphi+\frac{1}{2}\mathcal L_{\K}\phi=0,\ \  \Li \phi-\frac{1}{2}\mathcal L_{\K}\varphi=0.
$$
In particular, if $F=\varphi\in C^{\infty}(M,\mathbb R)$, then $F\in\ker \Li^{+}$ iff \ $\Li(\varphi)= \mathcal L_{\K}(\varphi)=0$ iff $P(\varphi)=0$. The same conclusion holds for $\varphi\in\ker \Li^{-}$.
\end{prop}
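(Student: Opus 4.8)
The plan is to reduce the whole statement to the operator identities $\Li^{\pm}=\Li\pm\frac{i}{2}\mathcal L_{\K}$ established in \propref{cala13}, together with the elementary observation that both $\Li$ and $\mathcal L_{\K}$ are \emph{real} operators, i.e.\ they map $C^{\infty}(M,\R)$ into $C^{\infty}(M,\R)$. This is immediate from their definitions: $\Li=P^*P$ (\defref{myl1}) is built out of $P$ and $P^*$, which are defined on real functions, while $\mathcal L_{\K}f=\{\sch,f\}$ is the Lie derivative along the real vector field $\K$. Extending $\Li$ and $\mathcal L_{\K}$ complex-linearly to $C^{\infty}(M,\mathbb C)$, this reality is exactly what legitimizes separating real and imaginary parts below.

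First I would compute directly, for $F=\varphi+i\phi$,
$$
\Li^{+}F=\Big(\Li+\tfrac{i}{2}\mathcal L_{\K}\Big)(\varphi+i\phi)=\Big(\Li\varphi-\tfrac{1}{2}\mathcal L_{\K}\phi\Big)+i\Big(\Li\phi+\tfrac{1}{2}\mathcal L_{\K}\varphi\Big).
$$
Since the two bracketed terms are real functions, $\Li^{+}F=0$ holds if and only if each of them vanishes, which is precisely the stated system for $\ker\Li^{+}$. The case of $\Li^{-}$ is handled identically, replacing $\frac{i}{2}\mathcal L_{\K}$ by $-\frac{i}{2}\mathcal L_{\K}$, which flips the signs in front of $\mathcal L_{\K}$ and yields the second system.

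For the \emph{in particular} assertion I would set $\phi=0$. The system for $\ker\Li^{+}$ then collapses to the simultaneous equations $\Li\varphi=0$ and $\mathcal L_{\K}\varphi=0$, and the system for $\ker\Li^{-}$ produces the same pair; this explains why the two kernels coincide on real functions. To identify this pair with $P(\varphi)=0$ I would argue in both directions: if $P(\varphi)=0$ then trivially $\Li\varphi=P^*P(\varphi)=0$, and by \propref{lkprop} also $\mathcal L_{\K}\varphi=2(JP)^*P(\varphi)=0$; conversely, $\Li\varphi=0$ forces $\|P(\varphi)\|^2=\la\Li\varphi,\varphi\ra=0$, hence $P(\varphi)=0$ (this is \propref{ker11}), after which $\mathcal L_{\K}\varphi=0$ follows as before. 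Thus the conditions $P(\varphi)=0$, $\Li\varphi=0$, and $\Li\varphi=\mathcal L_{\K}\varphi=0$ are all equivalent.

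The computation is entirely routine; the only point requiring genuine care is the reality of $\Li$ and $\mathcal L_{\K}$, which justifies the real--imaginary splitting, together with the identity $\mathcal L_{\K}=2(JP)^*P$ from \propref{lkprop} that makes $\mathcal L_{\K}\varphi$ vanish the moment $P(\varphi)=0$. No substantial obstacle is expected.
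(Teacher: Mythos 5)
Your proposal is correct and follows essentially the same route as the paper: expand $\Li^{\pm}F$ using $\Li^{\pm}=\Li\pm\frac{i}{2}\mathcal L_{\K}$ from \propref{cala13}, separate real and imaginary parts, and for the real case use $\Li\varphi=0\Leftrightarrow P(\varphi)=0$ together with $\mathcal L_{\K}=2(JP)^*P$ from \propref{lkprop}. Your explicit remark that $\Li$ and $\mathcal L_{\K}$ preserve real functions (which justifies the splitting) is a small clarification the paper leaves implicit, but the argument is otherwise identical.
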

\begin{proof}
For $F=\varphi+\sqrt{-1}\phi\in C^{\infty}(M,\mathbb C)$, we calculate
\begin{align*}
\Li^+(F) = &\Li (\varphi+i\phi)+\frac{i}{2}\mathcal L_{\K}(\varphi+i\phi)\\
=&\Li(\varphi)-\frac{1}{2}\Lie_{\K}(\phi)+i(\Li(\phi)+\frac{1}{2}\mathcal L_{\K}(\varphi)).
\end{align*}
So $\Li^+(F)=0$ if and only if 
$$
\Li \varphi-\frac{1}{2}\mathcal L_{\K}\phi=0,\ \  \Li \phi+\frac{1}{2}\mathcal L_{\K}\varphi=0.
$$
If $F=\varphi$, i.e. $\phi=0$, then $\Li^+(F)=0$ if and only if 
$$
\Li(\varphi)=\mathcal L_{\K}(\varphi)=0.
$$ Here
$\Li(\varphi)=0$ implies $P(\varphi)=0$, which also implies $\Lie_\K(\varphi)=0$ since $\Lie_\K(\varphi)=2(JP)^*P(\varphi)$.
\end{proof}

\begin{rem}
When the background manifold is K\"ahler, $F=\varphi+i\phi\in\ker \Li^+$ implies 
$$
\g \varphi+J\g \phi
$$
is holomorphic (see \cite[Section 2.5]{gauduchon2010calabi}). In the K\"ahler case, if we denote $\mathfrak h_{\rm{red}}$ the set of real holomorphic vector fields whose zero set is non-empty, then any real vector field $X\in \mathfrak h_{\rm{red}}$ if and if and only if there exists $F=\varphi+i\phi\in\ker \Li^+$ such that (\cite[section 95]{lichnerowicz1958geometrie})
$$
X=\g \varphi+J\g \phi.
$$
\end{rem}

When $J$ is EAK, the Calabi operators have the following commutative relation.
\begin{lem}\label{calabicom}
If $J$ is EAK, then $\Li^+,\Li^-$ commute. The composition $\Li^+\Li^-$ is self-adjoint and semi-positive and we have
$$
\ker (\Li^+\Li^-)=\ker \Li^++\ker\Li^-.
$$
\end{lem}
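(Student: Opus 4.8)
The plan is to reduce everything to the two elementary operators $\Li$ and $\mathcal L_{\K}$ through the identities $\Li^{\pm}=\Li\pm\frac{i}{2}\mathcal L_{\K}$ supplied by \propref{cala13}, and then exploit the fact that, under the EAK hypothesis, these two operators commute.

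First I would establish commutativity of $\Li^+$ and $\Li^-$. Since $J$ is EAK, \lemref{exal} guarantees that $\mathcal L_{\K}$ commutes with $\Li=P^*P$. Expanding the product and using this commutation to cancel the cross terms gives
\begin{align*}
\Li^+\Li^-=\Li^2+\frac{1}{4}\mathcal L_{\K}^2=\Li^-\Li^+,
\end{align*}
which proves $\Li^+\Li^-=\Li^-\Li^+$. Self-adjointness of the composition is then immediate: by \defref{cala12} each $\Li^{\pm}$ is self-adjoint for the Hermitian $L^2$ inner product on $C^{\infty}(M,\mathbb C)$, so $(\Li^+\Li^-)^*=(\Li^-)^*(\Li^+)^*=\Li^-\Li^+=\Li^+\Li^-$.

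For the semi-positivity and the kernel identity I would appeal to the spectral theory of elliptic operators on the compact manifold $M$. Since $\Li$ is a fourth-order elliptic operator (the remark following \defref{myl1}) and $\mathcal L_{\K}$ is only first order, each $\Li^{\pm}=\Li\pm\frac{i}{2}\mathcal L_{\K}$ inherits the elliptic principal symbol of $\Li$ and is therefore elliptic; being self-adjoint on a compact manifold, it has discrete real spectrum and compact resolvent. Because $\Li^+$ and $\Li^-$ are commuting self-adjoint elliptic operators, they can be simultaneously diagonalized by a common orthonormal basis $\{\psi_k\}\subset C^{\infty}(M,\mathbb C)$ with $\Li^{\pm}\psi_k=\lambda_k^{\pm}\psi_k$, and the semi-positivity asserted in \defref{cala12} forces $\lambda_k^{\pm}\geq 0$. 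Then $\Li^+\Li^-\psi_k=\lambda_k^+\lambda_k^-\psi_k$ with $\lambda_k^+\lambda_k^-\geq 0$, which yields semi-positivity of $\Li^+\Li^-$. For the kernel, the identity $\lambda_k^+\lambda_k^-=0$ holds precisely when $\lambda_k^+=0$ or $\lambda_k^-=0$, that is $\psi_k\in\ker\Li^+$ or $\psi_k\in\ker\Li^-$; expanding an arbitrary $f\in\ker(\Li^+\Li^-)$ in this basis and grouping the two families shows $f\in\ker\Li^++\ker\Li^-$. The reverse inclusion $\ker\Li^++\ker\Li^-\subseteq\ker(\Li^+\Li^-)$ follows at once from commutativity, since $\ker\Li^{\pm}\subseteq\ker(\Li^{\mp}\Li^{\pm})=\ker(\Li^+\Li^-)$.

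The step I expect to be the main obstacle is the rigorous justification of the simultaneous diagonalization: one must confirm that $\Li^{\pm}$ are genuinely elliptic so that their spectra are discrete, and that the commuting self-adjoint pair really admits a single common orthonormal eigenbasis on $C^{\infty}(M,\mathbb C)$. A useful sanity check that the spectral picture is consistent is that on an eigenfunction $\psi_k$ one has $\|\Li\psi_k\|^2-\tfrac14\|\mathcal L_{\K}\psi_k\|^2=\lambda_k^+\lambda_k^-\geq 0$, which reconciles the semi-positivity with the naive expansion of $\langle\Li^+\Li^- f,f\rangle$.
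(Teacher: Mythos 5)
Your proposal is correct. The first half (commutativity via \lemref{exal} and \propref{cala13}, then self-adjointness of the product from commutativity) is exactly the paper's argument; note in passing that your expansion $\Li^+\Li^-=\Li^2+\frac{1}{4}\Lie_{\K}^2$ makes the semi-positivity genuinely non-obvious, since $\Lie_{\K}^2$ is negative semi-definite, so some spectral input is unavoidable. Where you diverge is in how that input is supplied. The paper invokes the $L^2$-splitting theorem for the self-adjoint elliptic operators $\Li^{\pm}$, writing $C^{\infty}(M,\C)=\ker\Li^{\pm}\oplus\IM\Li^{\pm}$ with $\Li^{\pm}:\IM\Li^{\pm}\to\IM\Li^{\pm}$ an isomorphism, and deduces $\ker(\Li^+\Li^-)=\ker\Li^-+(\IM\Li^-\cap\ker\Li^+)\subset\ker\Li^-+\ker\Li^+$; semi-positivity is asserted from commutativity of two semi-positive self-adjoint operators. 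You instead diagonalize $\Li^+$ and $\Li^-$ simultaneously in a common orthonormal eigenbasis, which delivers semi-positivity and the kernel identity in one stroke from $\lambda_k^+\lambda_k^-\geq 0$. Both routes rest on the same analytic fact you correctly flag as the crux, namely ellipticity of $\Li^{\pm}$ (the first-order perturbation $\frac{i}{2}\Lie_{\K}$ does not disturb the principal symbol of the fourth-order elliptic $\Li$); given that, the simultaneous diagonalization is standard (finite-dimensional eigenspaces of $\Li^+$ are $\Li^-$-invariant, and one diagonalizes $\Li^-$ on each). Your version is slightly heavier in machinery but makes the semi-positivity transparent, whereas the paper's Fredholm-splitting argument is leaner for the kernel statement and leaves the semi-positivity of the product to a general (unproved) operator-theoretic fact. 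One small point worth making explicit in your write-up: when you split $f\in\ker(\Li^+\Li^-)$ into the two families of eigenmodes, each piece lands in the finite-dimensional space $\ker\Li^{\pm}$, so smoothness of the two summands is automatic and no convergence issue arises.
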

\begin{proof}
Since $J$ is EAK, \lemref{exal} implies that $\Lie_\K\Li=\Li\Lie_\K$. It follows from \propref{cala13} that $\Li^+\Li^-=\Li^-\Li^+$.
Since $\Li^+,\Li^-$ are all semi-positive and self-adjoint, by commutativity, $ \Li^+\Li^-$ is also semi-positive and self-adjoint. Applying the $L^2$-splitting theorem, we obtain that
$$
C^{\infty}(M,\C)=\ker \Li^{\pm}\oplus\IM\Li^{\pm},
$$
and $\Li^{\pm}:\IM\Li^{\pm}\to \IM\Li^{\pm}$ is isomorhism. So we  have 
$$
\ker (\Li^+\Li^-)=\ker\Li^-+\IM\Li^-\cap\ker\Li^+\subset \ker\Li^-+ \ker\Li^+.
$$
But $\ker\Li^-+ \ker\Li^+\subset \ker (\Li^+\Li^-)$ is obvious, due to the commutativity of $\Li^+,\Li^-$.
\end{proof}

%%%%%%%%%%%%%%%%%%%%%%%%%%%%%%%%%%%%%%%%%%%%%%%%%%%%%%%%%%%%%%%%%%%%%%%%%%%%%%%%%%%%%%%%%%%%%%%%%%%
\subsection{Decomposition of $C^{\infty}(M,\R)$ and $T_J\ac$}
\begin{lem}\label{decomp0}
The functions space $C^{\infty}(M,\R)$ has the following orthogonal decompostion
$$
C^{\infty}(M,\R) = \ker P\oplus \IM P^* = \ker JP\oplus \IM (JP)^*
$$
and $\Li:\IM P^*\to \IM P^*$ is an isomorphism.
\end{lem}
\begin{proof}
Since the Lichnerowicz operator $\Li$ is an elliptic (see \cite[equation (11)]{MR4174303}) self-adjoint operator, the splitting theorem of elliptic operator tells us
$$
C^{\infty}(M,\R)  = \IM \Li\oplus \ker \Li,
$$
and $\Li=P^*P:\IM \Li\to \IM \Li$ is an isomorphisom. Since $\ker \Li = \ker P,\ \IM\Li = (\ker P^*P)^{\perp}=(\ker P )^{\perp}=\IM P^*$, the decomposition becomes
$$
C^{\infty}(M,\R) = \ker P\oplus \IM P^*.
$$
The isomorphism $\Li=P^*P:\IM \Li\to \IM \Li $ can be decomposed as 
$$
\IM \Li=\IM P^*\xrightarrow[\cong]{P} \IM P \xrightarrow[\cong]{P^*}\IM P^*=\IM \Li.
$$
Similarly, if we consider $\Li=(JP)^*JP$, we can obtain decompostion with repect to $JP$ and $(JP)^*$.
\end{proof}

\begin{lem}\label{tangentdecm}
The tangent space  $T_J\ac$ has the following decompostion:
\begin{align}
T_J\ac =& \IM P\oplus\ker P^*=\IM JP\oplus\ker (JP)^*,
\end{align}
and
\begin{align}\label{decomp1}
T_J\ac =& T_J\mathcal D \oplus (\ker P^*\cap \ker (JP)^*).
\end{align}
The map $PP^*:\IM P\to\IM P$ is an isomorphism.
\end{lem}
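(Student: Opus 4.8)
The plan is to reduce every assertion to the elliptic splitting theorem already used in \lemref{decomp0}. Both displayed direct sums are orthogonal decompositions, so the only genuine content is the \emph{closedness} of the relevant images; the algebraic identification of the complements is formal. I would dispatch the first decomposition together with the final assertion (both are immediate from \lemref{decomp0}), and then concentrate on \eqref{decomp1}, which is the substantive point.

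First I would treat $T_J\ac = \IM P\oplus\ker P^*$. By the definition of the formal adjoint, $v\in\ker P^*$ iff $\la v,P(f)\ra=0$ for all $f$, i.e. $\ker P^*=(\IM P)^{\perp}$; the same holds for $(JP)$. Thus it suffices to know $\IM P$ and $\IM JP$ are closed, and this is contained in \lemref{decomp0}: the factorization $\IM P^*\xrightarrow[\cong]{P}\IM P\xrightarrow[\cong]{P^*}\IM P^*$ exhibits $P$ as an isomorphism from the closed space $\IM P^*$ onto $\IM P$, so $\IM P$ is closed and $T_J\ac=\IM P\oplus(\IM P)^\perp=\IM P\oplus\ker P^*$; the identity $\Li=(JP)^*JP$ gives the $JP$-version verbatim. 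The last assertion drops out of the same factorization: on $\IM P$ we may write $PP^*=P\circ(P^*|_{\IM P})$, and since $P^*\colon\IM P\to\IM P^*$ and $P\colon\IM P^*\to\IM P$ are both isomorphisms by \lemref{decomp0}, so is their composition $PP^*\colon\IM P\to\IM P$.

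For \eqref{decomp1} I would introduce the combined operator $Q\colon C^\infty(M,\R)\oplus C^\infty(M,\R)\to T_J\ac$, $Q(f,g)=P(f)+JP(g)$, so that $\IM Q=\IM P+\IM JP=T_J\mathcal D$ and $Q^*(v)=(P^*v,(JP)^*v)$, whence $\ker Q^*=\ker P^*\cap\ker(JP)^*=(T_J\mathcal D)^\perp$. Writing $Q^*Q$ in block form and using $(JP)^*=-P^*J$ (see \eqref{def:jpstar}), $P^*P=(JP)^*JP=\Li$, and $(JP)^*P=\tfrac12\Lie_\K$ (\propref{lkprop}), one gets
\begin{align*}
Q^*Q=\begin{pmatrix}\Li & -\tfrac12\Lie_\K\\[2pt] \tfrac12\Lie_\K & \Li\end{pmatrix}.
\end{align*}
The crucial observation is that $\Lie_\K$ is the \emph{first-order} operator $f\mapsto\{\sch,f\}$, hence of strictly lower order than the $4$th-order elliptic operator $\Li$; therefore the order-$4$ principal symbol of $Q^*Q$ is $\mathrm{diag}\bigl(\sigma_4(\Li),\sigma_4(\Li)\bigr)$, which is invertible, and $Q^*Q$ is a self-adjoint elliptic operator of order $4$.

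Finally I would run the argument of \lemref{decomp0} for $Q^*Q$: the elliptic splitting theorem yields $C^\infty\oplus C^\infty=\ker(Q^*Q)\oplus\IM(Q^*Q)$ with $Q^*Q$ an isomorphism on $\IM(Q^*Q)$; since $\la Q^*Qx,x\ra=\|Qx\|^2$ we have $\ker(Q^*Q)=\ker Q$ and $\IM(Q^*Q)=\IM Q^*$, and the factorization $\IM Q^*\xrightarrow[\cong]{Q}\IM Q\xrightarrow[\cong]{Q^*}\IM Q^*$ shows $\IM Q=T_J\mathcal D$ is closed. Hence $T_J\ac=\IM Q\oplus(\IM Q)^\perp=T_J\mathcal D\oplus(\ker P^*\cap\ker(JP)^*)$. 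The main obstacle is precisely this closedness of $T_J\mathcal D=\IM P+\IM JP$: in infinite dimensions a sum of two closed subspaces need not be closed, and it is exactly \propref{lkprop}---the collapse of $(JP)^*P$ from apparent order $4$ to order $1$---that rescues the ellipticity of $Q^*Q$ and with it the closedness.
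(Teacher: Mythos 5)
Your proof is correct, and for the first displayed decomposition and the isomorphism $PP^*\colon\IM P\to\IM P$ it follows the paper's own route essentially verbatim (identify $\ker P^*=(\IM P)^{\perp}$, get closedness of $\IM P$ from the factorization in \lemref{decomp0}, and compose the two isomorphisms). Where you genuinely diverge is \eqref{decomp1}: the paper disposes of it in one sentence, ``(\ref{decomp1}) follows from the fact $T_J\mathcal D=\IM P+\IM JP$,'' which silently assumes that the sum $\IM P+\IM JP$ of two closed subspaces is itself closed --- exactly the point you flag as the real obstacle, and one the paper never addresses (iterating the two single-operator splittings does not work, since $\ker P^*$ is not preserved by $JP$). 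Your remedy --- packaging the two maps into $Q(f,g)=P(f)+JP(g)$, computing $Q^*Q=\bigl(\begin{smallmatrix}\Li & -\tfrac12\Lie_\K\\ \tfrac12\Lie_\K & \Li\end{smallmatrix}\bigr)$ via \eqref{def:jpstar} and \propref{lkprop}, and observing that the off-diagonal entries are first order so that $Q^*Q$ inherits the ellipticity of $\Li$ --- is sound, and the identifications $\ker Q^*Q=\ker Q$, $\IM Q^*Q=\IM Q^*$ together with the splitting theorem do give closedness of $\IM Q=T_J\mathcal D$ and hence the orthogonal decomposition. So your argument is not merely an alternative: it supplies a justification for a step the paper takes for granted, with the single key insight (correctly attributed to \propref{lkprop}) that the cross term $(JP)^*P$ collapses from apparent order $4$ to order $1$. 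The price is the extra bookkeeping of the block operator; the payoff is that \eqref{decomp1} actually gets proved.
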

\begin{proof}
Since
$
(\IM P)^{\perp}=\ker P^*
$
, we have
$$
T_J\ac = \IM P\oplus (\IM P)^{\perp}=\IM P\oplus \ker P^*.
$$
By \lemref{decomp0}, we know that $PP^*:\IM P\to\IM P$ is an isomorphism, and
$$
 \IM P \xrightarrow[\cong]{P^*}\IM P^*\xrightarrow[\cong]{P} \IM P=\IM PP^*.
$$
Similarly we can obtain $T_J\ac  =\IM JP\oplus\ker (JP)^*$ and that $JP(JP)^*:\IM JP\to\IM JP$ is an isomorphism. Therefore, (\ref{decomp1}) follows from the fact $ T_J\mathcal D=\IM P+\IM JP$.

\end{proof}

According to the decomposition of $T_J\ac$ in \lemref{tangentdecm}, we discuss the variation of Hermitian Calabi functional in different directions.
\begin{lem}
If we take variation in  different directions in \corref{1stvaria}, we have
\begin{enumerate}
\item If $v\in \ker (JP)^*$, $D\mathcal C(v)=0, \quad Ds^{\nabla}(v)=0$.
\item If $v\in\IM JP$, $v=JP(f)$,\quad $D\mathcal C(v)=-2\la f, \Li(s^{\nabla})\ra, \quad Ds^{\nabla}(v)=-\Li(f)$.
\item If $v\in\IM P$, $v=P(f) $,\quad $D\mathcal C(v)=0, \quad Ds^{\nabla}(v)=-\frac{1}{2}\Lie_\K(f)$.
\end{enumerate}
\end{lem}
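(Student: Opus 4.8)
The plan is to derive the entire lemma from two variation formulas already at hand: the scalar-curvature variation $Ds^\nabla(v) = -(JP)^*v$ from \eqref{scalarvaria}, and the Calabi-functional variation $D\mathcal C(v) = -2\la (JP)^*v, \sch\ra$ recorded as \eqref{ca1} in the proof of \corref{1stvaria}. The strategy is then to insert the explicit form of $v$ for each of the three directions and to collapse the resulting expressions using the operator identities $\Li = (JP)^*JP$ and $\Lie_\K = 2(JP)^*P$ (\propref{lkprop}), together with the self-adjointness of $\Li$ and the anti-self-adjointness of $\Lie_\K$ on functions.

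For item (1) I would simply note that $v\in\ker(JP)^*$ forces $(JP)^*v = 0$, so both master formulas vanish at once, with no computation.

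For item (2), substituting $v = JP(f)$ into the scalar-curvature formula and using $\Li = (JP)^*JP$ gives $Ds^\nabla(v) = -\Li(f)$ directly; for the functional, the same substitution gives $D\mathcal C(v) = -2\la \Li(f), \sch\ra$, and I would move $\Li$ across the $L^2$ pairing by its self-adjointness to reach $-2\la f, \Li(\sch)\ra$.

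Item (3) is the only case requiring a genuine observation rather than bookkeeping. Substituting $v = P(f)$ and using $\Lie_\K = 2(JP)^*P$ yields $Ds^\nabla(v) = -\frac{1}{2}\Lie_\K(f)$ at once. For the functional one obtains $D\mathcal C(v) = -\la \Lie_\K(f), \sch\ra$, and the point I expect to need is that this pairing vanishes: using the anti-self-adjointness of $\Lie_\K$ it equals $\la f, \Lie_\K(\sch)\ra$, and $\Lie_\K(\sch) = \{\sch, \sch\} = 0$ since the Poisson bracket of $\sch$ with itself is zero. Thus the only (mild) obstacle is recognising this tautological vanishing in item (3); everything else is immediate substitution into the two master variation formulas.
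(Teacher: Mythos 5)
Your proposal is correct and follows essentially the same route as the paper: all three items are obtained by substituting into the two master formulas $Ds^\nabla(v)=-(JP)^*v$ and $D\mathcal C(v)=-2\la (JP)^*v,\sch\ra$, using $\Li=(JP)^*JP$, $\Lie_\K=2(JP)^*P$, and the vanishing $\Lie_\K\sch=\{\sch,\sch\}=0$ exactly as the paper does. The only cosmetic difference is in item (2), where you pass $\Li$ across the pairing by self-adjointness while the paper moves $JP$ to the other side via $(JP)^*$; these are the same adjointness step.
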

\begin{proof}
The above are all direct consequences of \lemref{tangentdecm},
\begin{enumerate}
\item For any $v\in\ker (JP)^*$, it follows immediately from (\ref{scalarvaria})  
$$
D\mathcal C(v)=-2\la (JP)^* v, s^{\nabla}\ra=0.
$$
\item If we choose $v=JP(f)\in\IM JP$ for some $f\in C^{\infty}(M,\R)$, by (\ref{scalarvaria}) we have 
\begin{align}
Ds^{\nabla}(JP(f))=-(JP)^*JP f=-\Li(f),
\end{align}
 and by (\ref{ca1}), 
\begin{align}\label{bianfen1}
D\mathcal C(JP(f))
=-\la JP(f), J\mathcal L_{\K}J \ra
=-2\la JP(f), JP(\sch) \ra
=-2\la f, \Li(s^{\nabla})\ra.
\end{align}
\item If we choose $v=P(f)\in\IM(P) $ for some $f\in C^{\infty}(M,\R)$, then
\begin{align}\label{pjvari}
Ds^{\nabla}(P(f))=-(JP)^*P f=-\frac{1}{2}\Lie_{\K}f.
\end{align}
Since 
$$
\Lie_{\K}\sch = (d^c \sch, d\sch)=(Jd\sch, d\sch)=-(d\sch, Jd\sch)=0,
$$ we get
\begin{align}\label{lj}
D\mathcal C(P(f))=-\la \Lie_{\K}f, \sch\ra
=\la f, \Lie_{\K}(s^{\nabla})\ra=0.
\end{align}

\end{enumerate}
\end{proof}

\section{Hessian of the Hermitian Calabi functional}
In this section, we study the second variation of Hermitian Calabi functional.
\subsection{Proof of \eqref{prop:second eak} and \eqref{prop:second chsc} in \thmref{prop:second main}}

\begin{thm}\label{prop:second}
For any $J\in\ac$ and $u,v\in T_J\ac$, we choose $J(t_1,t_2)$ satisfying $J(0,0)=J, J'_{t_1}(0,0)=u,J'_{t_2}(0,0)=v$.
Then 
\begin{align*}
\Hess \mathcal C(u,v)=- \la \frac{\partial^2 J}{\partial t_1\partial t_2}\vert_{(0,0)}, J\Lie_\K J\ra+2\la  (JP)^*u,(JP)^*v\ra-\la u, v\mathcal L_{\K }J+J\mathcal L_{\K }v\ra.
\end{align*}
In particular,
\begin{enumerate}
\item Assuming that $J$ is EAK, then 
\begin{align}\label{hessc0}
\Hess \mathcal C(u,v)=2\la  (JP)^*u,(JP)^*v\ra-\la J\mathcal L_{\K }u,v\ra.
\end{align}
\item
Assuming that $J$ has constant Hermitian scalar curvature, then
\begin{align} \label{hesschsc}
\Hess \mathcal C(u,v)=2\la  (JP)^*u,(JP)^*v\ra.
\end{align}
$\Hess\mathcal C$ is semi-positive on $T_J\ac$ and $\Hess\mathcal C(v,v)=0$ iff $v\in\ker (JP)^*$.
\end{enumerate}
\end{thm}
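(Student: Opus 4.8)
The plan is to compute the second variation of $\mathcal C(J)=\int_M (\sch_J)^2\,\vol$ directly from the first variation formula $D\sch_J(v)=-(JP)^*v$ established in \eqref{scalarvaria}. First I would write $D\mathcal C(v)=2\la \sch, D\sch(v)\ra = -2\la\sch,(JP)^*v\ra$ and then differentiate again along a second direction. Choosing a two-parameter family $J(t_1,t_2)$ with the prescribed mixed and first derivatives, I would apply $\partial_{t_1}$ to the expression $-2\la \sch_{J(t_2)}, (JP_{J(t_2)})^* \partial_{t_2}J\ra$. The derivative lands on three factors: (i) the first $\sch$ factor, producing a term $-2\la D\sch(u), (JP)^*v\ra = 2\la (JP)^*u,(JP)^*v\ra$ after using \eqref{scalarvaria} again; (ii) the operator $(JP)^*$ itself, whose $J$-dependence I would need to differentiate; and (iii) the argument $\partial_{t_2}J$, whose $t_1$-derivative is the mixed second partial $\frac{\partial^2 J}{\partial t_1\partial t_2}$. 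Collecting the mixed-partial term and using $D\mathcal C(w)=-\la w, J\Lie_\K J\ra$ from \eqref{ca1} produces the first term $-\la \frac{\partial^2 J}{\partial t_1\partial t_2}, J\Lie_\K J\ra$.

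The main obstacle I expect is controlling term (ii), the variation of the operator $(JP)^*$, equivalently the variation of $P^*$ or of $\Lie_\K$ as $J$ moves. Here I would exploit the identity $D\mathcal C(v)=-\la v, J\Lie_\K J\ra$ as a cleaner starting point than the $\sch$-squared form, since it already packages one factor of $(JP)^*$ together with the extremal vector field. Differentiating $-\la v, J\Lie_\K J\ra$ in the $u$-direction splits into the variation of $J$ (giving $-\la v, u\Lie_\K J\ra$), the variation of $\K=\g_\omega\sch$ which feeds $D\sch(u)$ back through the metric and hence reproduces the $(JP)^*$ coupling, and the variation of $\Lie_\K J$ which contributes the $J\Lie_\K v$-type term. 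Careful bookkeeping of the self-adjointness/anti-self-adjointness properties from \lemref{lemma:anti-lk} and the adjoint relation \eqref{def:jpstar} is what converts the raw derivatives into the symmetric shape $2\la (JP)^*u,(JP)^*v\ra - \la u, v\Lie_\K J + J\Lie_\K v\ra$.

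For the two specialisations, the work is then purely algebraic. When $J$ is EAK we have $\Lie_\K J=0$, so both the mixed-partial term and the $v\Lie_\K J$ term drop out, leaving $\Hess\mathcal C(u,v)=2\la(JP)^*u,(JP)^*v\ra - \la u, J\Lie_\K v\ra$; I would then rewrite $\la u, J\Lie_\K v\ra = \la J\Lie_\K u, v\ra$ using that $J\Lie_\K$ is self-adjoint (\lemref{jlk}) to match \eqref{hessc0}. When $\sch$ is constant, $\K=\g_\omega\sch=0$ identically, so $\Lie_\K$ vanishes as an operator and all three curvature-coupling terms disappear, yielding $\Hess\mathcal C(u,v)=2\la(JP)^*u,(JP)^*v\ra$. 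Semi-positivity on $T_J\ac$ is then immediate since $\Hess\mathcal C(v,v)=2\|(JP)^*v\|^2\geq 0$, with equality exactly when $(JP)^*v=0$, i.e. $v\in\ker(JP)^*$, completing \eqref{hesschsc}.
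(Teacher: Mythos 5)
Your proposal is correct and follows essentially the same route as the paper: the paper likewise differentiates the packaged first variation $D\mathcal C(\cdot)=-\la \cdot, J\Lie_\K J\ra$ from \eqref{ca1}, uses $D\K(v)=-\g_{\omega}(JP)^*v$ (so $J\Lie_{D\K(v)}J=-2JP(JP)^*v$) to produce the $2\la (JP)^*u,(JP)^*v\ra$ term, and then specialises via $\Lie_\K J=0$ (EAK) or $\K=0$ (constant $\sch$), invoking the self-adjointness of $J\Lie_\K$ exactly as you do.
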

\begin{proof}
Taking derivative on (\ref{ca1}) leads to
\begin{align*}
\Hess \mathcal C(u,v)=&-\frac{\partial}{\partial t_2}|_{(0,0)}\la J_{t_1}'(t_1,t_2),J(t_1,t_2)\mathcal L_{\K(t_1,t_2)}J(t_1,t_2)\ra\\
=&
- \la \frac{\partial^2 J}{\partial t_1\partial t_2}|_{(0,0)}, J\Lie_\K J\ra-\la  u,v\Lie_{\K}J+J\Lie_{D\K(v)}J+J\Lie_{\K}v\ra
.
\end{align*}

It follows from \lemref{1stvaria} and the identity: $\iota_{\K}\omega=-d\sch$ that
\begin{align*}
\iota_{{D\K(v)}}\omega=-d D\sch(v) =d(JP)^*v.
\end{align*}
So we have
\begin{align*}
D\K(v) = -\g_{\omega}(JP)^*v,
\end{align*}
which gives
$$J\Lie_{D\K(v)}J=-2JP(JP)^*v.$$
Then it follows
\begin{align*}
\Hess \mathcal C(u,v)=- \la \frac{\partial^2 J}{\partial t_1\partial t_2}\vert_{(0,0)}, J\Lie_\K J\ra+2\la  (JP)^*u,(JP)^*v\ra-\la u, v\mathcal L_{\K }J+J\mathcal L_{\K }v\ra.
\end{align*}
If $J$ is EAK, then $\Lie_K J=0$, we obtain (\ref{hessc0}); if $J$ has constant Hermitian scalar curvature, then $\K=0$, we obtain (\ref{hesschsc}). 

The  average scalar curvature 
\begin{align}
\underline \sch=\left(\int_M\sch(J)\vol\right)\big /\left(\int_M \vol\right)
\end{align}
dose not depend on  $J\in\ac$(see \cite[(9.5.15)]{gauduchon2010calabi}). Thus any $J\in\ac$ with constant  Hermitian scalar curvature take $\underline \sch$ as its   Hermitian scalar curvature. Cauchy inequality implies
\begin{align}
(\underline \sch)^2(\int_M \vol)^2 = (\int_M\sch(J)\vol)^2\leq (\int_M[\sch(J)]^2\vol)(\int_M \vol)= \mathcal C(J)(\int_M \vol)
\end{align}
i.e.
\begin{align*}
\mathcal C(J)\geq \int_M (\underline \sch)^2\vol.
\end{align*}
Thus any $J\in\ac$ with constant  Hermitian scalar curvature is the local minimum point of $\mathcal C$.
\end{proof}

\thmref{prop:second} shows that the Calabi functional is convex at $J$ if $J$ has constant Hermitian scalar curvature. In fact, we can also obtain convexity result at EAK point if we restrict Hermitian Calabi functional to complexified orbit.

Tangent vectors of complexified orbits are all characterised by smooth functions on $M$, we will restrict Hermitian Calabi functional in complexified orbits as following.

\begin{cor}\label{var21}
Suppose that $J$ is EAK. Let $u_1=P(f_1), v_1=P(f_2)$ in \thmref{prop:second}. Then
$$
\Hess\mathcal C(P(f_1), P(f_2))=0.
$$ 
Let $u=P(f_1)$ and $v =JP(f_2)$ in \thmref{prop:second},  we have
\begin{align*} 
\Hess\mathcal C(P(f_1), JP(f_2))=0.
\end{align*}
\end{cor}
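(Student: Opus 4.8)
The plan is to feed the specific tangent vectors directly into the EAK Hessian formula of \thmref{prop:second}, namely $\Hess \mathcal C(u,v)=2\la (JP)^*u,(JP)^*v\ra-\la J\mathcal L_{\K}u,v\ra$. This is a genuine bilinear form on $T_J\ac$ precisely because $J$ is a critical point of $\mathcal C$, so the second-order term $\partial^2 J/\partial t_1\partial t_2$ drops out and I never have to choose an extension of the tangent vectors. Everything then reduces to three operator identities already in hand: $(JP)^*P(f)=\tfrac12\mathcal L_{\K}(f)$ from \propref{lkprop}; the identity $\Li=(JP)^*JP$; and the commutation $\mathcal L_{\K}P=P\mathcal L_{\K}$, valid at an EAK metric by \lemref{exal}. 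I also use repeatedly the defining adjoint relation $\la JP(g),w\ra=\la g,(JP)^*w\ra$.

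For the first identity I set $u=P(f_1)$ and $v=P(f_2)$. Applying \propref{lkprop} twice, the first term of the Hessian formula becomes $2\la (JP)^*P(f_1),(JP)^*P(f_2)\ra=\tfrac12\la \mathcal L_{\K}f_1,\mathcal L_{\K}f_2\ra$. For the second term I first commute, writing $J\mathcal L_{\K}P(f_1)=JP(\mathcal L_{\K}f_1)$ via \lemref{exal}, and then move $JP$ across the pairing by its adjoint, obtaining $\la JP(\mathcal L_{\K}f_1),P(f_2)\ra=\la \mathcal L_{\K}f_1,(JP)^*P(f_2)\ra=\tfrac12\la \mathcal L_{\K}f_1,\mathcal L_{\K}f_2\ra$. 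The two contributions are equal, so their difference vanishes and $\Hess\mathcal C(P(f_1),P(f_2))=0$.

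For the second identity I set $u=P(f_1)$ and $v=JP(f_2)$. Here $(JP)^*v=(JP)^*JP(f_2)=\Li(f_2)$, so the first term equals $2\la \tfrac12\mathcal L_{\K}f_1,\Li(f_2)\ra=\la \mathcal L_{\K}f_1,\Li(f_2)\ra$. For the second term I again commute to get $J\mathcal L_{\K}P(f_1)=JP(\mathcal L_{\K}f_1)$, and then use the adjoint together with $\la JP(g),JP(h)\ra=\la g,(JP)^*JP(h)\ra=\la g,\Li(h)\ra$ to find $\la JP(\mathcal L_{\K}f_1),JP(f_2)\ra=\la \mathcal L_{\K}f_1,\Li(f_2)\ra$. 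Once more the two terms coincide and cancel, giving $\Hess\mathcal C(P(f_1),JP(f_2))=0$.

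The computation carries no serious obstacle; the single point demanding care is that the cancellation in each case rests squarely on the commutation $\mathcal L_{\K}P=P\mathcal L_{\K}$, which holds \emph{only} because $J$ is EAK (\lemref{exal}). Away from EAK metrics this step fails and these off-diagonal Hessian entries need not vanish, so I would emphasise the EAK hypothesis as the essential ingredient rather than a bookkeeping convenience. It is worth noting that these two vanishing identities, read together with the companion computation $\Hess\mathcal C(JP(f_1),JP(f_2))=2\la\Li^+(f_1),\Li^-(f_2)\ra$, are exactly what yields the block structure of $\Hess\mathcal C$ on $T_J\mathcal D_J$ asserted in \eqref{calabih} of \thmref{prop:second main}.
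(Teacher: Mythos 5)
Your proof is correct and follows essentially the same route as the paper: plug $u,v$ into the EAK Hessian formula \eqref{hessc0}, use $(JP)^*P=\tfrac12\Lie_{\K}$ from \propref{lkprop}, $\Li=(JP)^*JP$, and the commutation $\Lie_{\K}P=P\Lie_{\K}$ from \lemref{exal}, and observe that the two terms cancel in each case. The only cosmetic difference is that the paper rewrites $\la JP(\Lie_{\K}f_1),JP(f_2)\ra$ as $\la P(\Lie_{\K}f_1),P(f_2)\ra$ via $J$-invariance of the metric before identifying it with $\la \Lie_{\K}f_1,\Li(f_2)\ra$, whereas you use the adjoint of $JP$ directly; these are equivalent.
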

\begin{proof}
Taking $u_1=P(f_1), v_1=P(f_2)$ in (\ref{hessc0}), we have
\begin{align*}
\Hess \mathcal C(u_1,v_1)=&2\la  (JP)^*P(f_1), (JP)^*P(f_2)\ra-\la J\mathcal L_{\K }(P(f_1)),P(f_2)\ra.
\end{align*}
It follows from the description of $\Lie_\K$ in \propref{lkprop} 
$$
 2\la (JP)^*P(f_1),(JP)^*P(f_2)\ra=\frac{1}{2}\la \mathcal L_{\K } f_1,\mathcal L_{\K }f_2\ra.
$$
By the commutativity $\mathcal L_{\K }P=P\mathcal L_{\K }$ in \lemref{exal}, we get
$$
\la J\mathcal L_{\K }(P(f_1)),P(f_2)\ra=\la JP(\mathcal L_{\K }f_1),P(f_2)\ra
=\frac{1}{2}\la \mathcal L_{\K }(f_1),\mathcal L_{\K }(f_2)\ra.
$$
So we see 
\begin{align*}
\Hess \mathcal C(u_1,v_1)=0.
\end{align*}
Similarly, if we take $u_2=P(f_1), v_2=JP(f_2)$, we then obtain
\begin{align*}
\Hess \mathcal C(u_2,v_2)=&2\la (JP)^*P(f_1),(JP)^*JP(f_2)\ra-\la J\mathcal L_{\K }(P(f_1)),JP(f_2)\ra\\
=&\la \mathcal L_{\K } f_1, \Li  f_2\ra-\la P\mathcal L_{\K } f_1,P(f_2)\ra\\
=& \la \mathcal L_{\K } f_1, \Li  f_2\ra-\la \mathcal L_{\K } f_1, \Li f_2\ra\\
=&0.
\end{align*}

\end{proof}

\begin{cor}\label{hesscp all}
In fact, if $J$ is EAK, then $\IM P$ annihilates the Hessian of Hermitian Calabi functional in the total space $T_J\ac$, i.e.
$$
\Hess\mathcal C(P(f),v)=0, \ \ \forall v\in T_J\ac.
$$
\end{cor}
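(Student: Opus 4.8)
The plan is to specialise the EAK Hessian formula from \thmref{prop:second}, namely
$\Hess\mathcal C(u,v)=2\la (JP)^*u,(JP)^*v\ra-\la J\mathcal L_{\K}u,v\ra$,
to the case $u=P(f)$ while leaving $v\in T_J\ac$ arbitrary. The expected outcome is that the two summands produce one and the same expression, which then cancel; the only nontrivial ingredients are the EAK-specific identities already established, so no new estimate is needed.

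First I would rewrite the curvature-type summand. By \propref{lkprop} we have $(JP)^*P(f)=\tfrac12\mathcal L_{\K}f$, whence $2\la (JP)^*P(f),(JP)^*v\ra=\la \mathcal L_{\K}f,(JP)^*v\ra$; moving $(JP)^*$ across the $L^2$ pairing by its defining adjointness gives $\la JP(\mathcal L_{\K}f),v\ra$. For the second summand, the EAK hypothesis enters through the commutation relation $\mathcal L_{\K}P=P\mathcal L_{\K}$ of \lemref{exal}, which yields $J\mathcal L_{\K}P(f)=JP(\mathcal L_{\K}f)$, so that $\la J\mathcal L_{\K}P(f),v\ra=\la JP(\mathcal L_{\K}f),v\ra$ as well. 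Subtracting the two identical expressions gives $\Hess\mathcal C(P(f),v)=0$ for every $v\in T_J\ac$.

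There is essentially no analytic obstacle: the whole argument is a substitution followed by two bookkeeping moves, both legitimised by the EAK condition $\mathcal L_{\K}J=0$ (via \propref{lkprop} and \lemref{exal}). The one point deserving care is keeping the adjointness conventions straight, since $(JP)^*$ pairs a function against an endomorphism; the passage from $\la \mathcal L_{\K}f,(JP)^*v\ra$ to $\la JP(\mathcal L_{\K}f),v\ra$ is exactly the defining property of the adjoint and not an additional lemma. As a consistency check, the symmetry of $\Hess\mathcal C$ together with the self-adjointness of $J\mathcal L_{\K}$ from \lemref{jlk} forces $\Hess\mathcal C(v,P(f))=0$ too, so that $\IM P$ lies in the kernel of $\Hess\mathcal C$ on the full space $T_J\ac$, strengthening \corref{var21}.
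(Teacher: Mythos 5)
Your proposal is correct and follows essentially the same route as the paper: both specialise the EAK Hessian formula \eqref{hessc0} to $u=P(f)$, identify the first summand as $\la \mathcal L_{\K}f,(JP)^*v\ra$ via \propref{lkprop}, and use the commutation $\mathcal L_{\K}P=P\mathcal L_{\K}$ from \lemref{exal} to show the second summand equals the same expression, forcing cancellation. The only cosmetic difference is on which side of the adjoint pairing you place $(JP)^*$.
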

\begin{proof}
That's because
\begin{align*}
\Hess C(P(f),v)=&2\la  (JP)^*P(f),(JP)^*v\ra-\la J\mathcal L_{\K }P(f),v\ra\\
=&\la \mathcal L_{\K }(f),(JP)^*v\ra-\la  \mathcal L_{\K }(f),(JP)^*v\ra\\
=&0.
\end{align*}
\end{proof}
\begin{cor}\label{hesscp}
Suppose that $J$ is EAK. We let $u_1=JP(f_1), v_1=JP(f_2)$ in \thmref{prop:second}. Then
\begin{align}\label{secondvar}
\Hess\mathcal C(JP(f_1), JP(f_2))
=&2\la\Li^+(f_1), \Li^-(f_2)\ra.
\end{align}
\end{cor}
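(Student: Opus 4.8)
The plan is to start from the EAK Hessian formula \eqref{hessc0}, namely $\Hess\mathcal C(u,v)=2\la (JP)^*u,(JP)^*v\ra-\la J\mathcal L_{\K}u,v\ra$, and substitute $u=JP(f_1)$, $v=JP(f_2)$. For the first term I would use the identity $\Li=(JP)^*JP$ recorded after \defref{myl1}, which turns $2\la (JP)^*JP(f_1),(JP)^*JP(f_2)\ra$ immediately into $2\la \Li(f_1),\Li(f_2)\ra$.

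For the second term I would compute $J\mathcal L_{\K}JP(f_1)$ explicitly. The EAK commutation relation $\mathcal L_{\K}JP=JP\mathcal L_{\K}$ from \lemref{exal} gives $J\mathcal L_{\K}JP(f_1)=J\cdot JP(\mathcal L_{\K}f_1)=-P(\mathcal L_{\K}f_1)$, so that $-\la J\mathcal L_{\K}JP(f_1),JP(f_2)\ra=\la P(\mathcal L_{\K}f_1),JP(f_2)\ra$. Passing to the adjoint and using $(JP)^*P=\tfrac12\mathcal L_{\K}$ from \propref{lkprop} rewrites this as $\tfrac12\la \mathcal L_{\K}(\mathcal L_{\K}f_1),f_2\ra$, and the anti-self-adjointness of $\mathcal L_{\K}$ (again \propref{lkprop}) converts it to $-\tfrac12\la \mathcal L_{\K}f_1,\mathcal L_{\K}f_2\ra$. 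Collecting the two contributions yields the intermediate formula $\Hess\mathcal C(JP(f_1),JP(f_2))=2\la \Li(f_1),\Li(f_2)\ra-\tfrac12\la \mathcal L_{\K}f_1,\mathcal L_{\K}f_2\ra$, which is exactly the first line of \eqref{calabih} in \thmref{prop:second main}.

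The remaining step is to recognise this as $2\la \Li^+(f_1),\Li^-(f_2)\ra$. I would expand using $\Li^{\pm}=\Li\pm\tfrac{i}{2}\mathcal L_{\K}$ from \propref{cala13} inside the Hermitian $L^2$ product on $C^{\infty}(M,\mathbb C)$. Since $f_1,f_2$ are real, each of $\Li f_j$ and $\mathcal L_{\K}f_j$ is real, so the expansion produces the real diagonal part $\la \Li f_1,\Li f_2\ra-\tfrac14\la \mathcal L_{\K}f_1,\mathcal L_{\K}f_2\ra$ together with an imaginary cross-term proportional to $\la \Li f_1,\mathcal L_{\K}f_2\ra+\la \mathcal L_{\K}f_1,\Li f_2\ra$.

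The crux of the argument, and the only place where the EAK hypothesis enters a second time, is showing this imaginary cross-term vanishes; I expect this to be the main (though still short) obstacle. Moving $\Li$ across via its self-adjointness and $\mathcal L_{\K}$ across via its anti-self-adjointness reduces the cross-term to $\la f_1,(\Li\mathcal L_{\K}-\mathcal L_{\K}\Li)f_2\ra$, which is zero precisely because $\Li$ and $\mathcal L_{\K}$ commute at an EAK metric (\lemref{exal}). With the imaginary part eliminated, the Hermitian product equals $\la \Li f_1,\Li f_2\ra-\tfrac14\la \mathcal L_{\K}f_1,\mathcal L_{\K}f_2\ra$; multiplying by $2$ gives $2\la \Li^+(f_1),\Li^-(f_2)\ra=2\la \Li f_1,\Li f_2\ra-\tfrac12\la \mathcal L_{\K}f_1,\mathcal L_{\K}f_2\ra$, which matches the intermediate formula and establishes \eqref{secondvar}.
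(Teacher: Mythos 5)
Your proposal is correct and follows essentially the same route as the paper: substitute into \eqref{hessc0}, use $\Li=(JP)^*JP$ for the first term, the EAK commutation relations of \lemref{exal} together with \propref{lkprop} to reduce the second term to $-\tfrac12\la\mathcal L_{\K}f_1,\mathcal L_{\K}f_2\ra$, and then expand $2\la\Li^+(f_1),\Li^-(f_2)\ra$ via \propref{cala13}, killing the imaginary cross-term by the (anti-)self-adjointness and commutativity of $\Li$ and $\mathcal L_{\K}$. The only cosmetic difference is that the paper first applies $\la Ju,Jv\ra=\la u,v\ra$ before commuting $\mathcal L_{\K}$ past $JP$, whereas you commute first and then use $J^2=-1$; the computations are equivalent.
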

\begin{proof}
Taking $u=JP(f_1), u=JP(f_2)$ in (\ref{hessc0}), we have
\begin{align}
\begin{split}
\Hess \mathcal C(u,v)=&2\la  (JP)^*JP(f_1),(JP)^*JP(f_2)\ra-\la J\mathcal L_{\K }JP(f_1),JP(f_2)\ra\\
=&2\la \Li(f_1),\Li(f_2)\ra-\la \mathcal L_{\K }JP(f_1),P(f_2)\ra.
\end{split}
\end{align}
Since $J$ is EAK, \lemref{exal} implies $JP\mathcal L_{\K }=\mathcal L_{\K }JP$, we obtain
\begin{align*}
\Hess \mathcal C(u,v)
&=2\la\Li(f_1), \Li(f_2)\ra-\frac{1}{2}\la\mathcal L_{\K}f_1,\mathcal L_{\K}f_2 \ra.
\end{align*}

On the other hand, we use \propref{cala13} to compute
\begin{align*}
&2\la\Li^+(f_1), \Li^-(f_2)\ra\\
=&2\la\Li(f_1)+\frac{i}{2}\mathcal L_{\K}f_1, \Li(f_2)-\frac{i}{2}\mathcal L_{\K}f_2\ra\\
=&2\la \Li(f_1), \Li(f_2)\ra-\frac{1}{2}\la\mathcal L_{\K}f_1, \mathcal L_{\K}f_2\ra+i(\la\Li(f_1),\mathcal L_{\K}f_2\ra+\la \mathcal L_{\K}(f_1), \Li(f_2)\ra).
\end{align*}
Again the commuting relation $\mathcal L_{\K}\Li=\Li\mathcal L_{\K}$ in Lemma \ref{exal} infers that
\begin{align*}
\la\Li(f_1),\mathcal L_{\K}f_2\ra =-\la\mathcal L_{\K}\Li(f_1),f_2\ra=-\la\Li\mathcal L_{\K}(f_1),f_2\ra=
-\la\mathcal L_{\K}(f_1), \Li f_2\ra.
\end{align*}
Thus we obtain
$$
2\la\Li^+(f_1), \Li^-(f_2)\ra=2\la \Li(f_1), \Li(f_2)\ra-\frac{1}{2}\la\mathcal L_{\K}f_1, \mathcal L_{\K}f_2\ra.
$$
\end{proof}

\subsection{Proof of \eqref{calabih} in \thmref{prop:second main} }
By \corref{var21} we have known that $\Hess\mathcal C$  vanishes on $\IM P$.

We only need to show that  $\Hess\mathcal C$ is strictly positive on the subspace $\IM JP$. It is proved in  \corref{hesscp} that
\begin{align}
\Hess\mathcal C(JP(f_1), JP(f_2))
=&2\la\Li^+(f_1), \Li^-(f_2)\ra.
\end{align}
$\Li^+,\Li^-$ are all self-adjoint(see \defref{cala12}), hence
\begin{align*}
\Hess\mathcal C(JP(f_1), JP(f_2))
=&2\la\Li^-\Li^+(f_1), f_2\ra.
\end{align*}
Since $J$ is EAK, the commutativity of  $\Li^+, \Li^-$ in \lemref{calabicom} implies that  $\Li^-\Li^+$ is semi-positive and $\ker\Li^+\Li^-=\ker\Li^++\ker\Li^-$. By \propref{kercala}, any real function $f\in\ker \Li^{\pm}$ iff $JP(f)=0$, i.e. $JP(f)$ is zero. So $\Hess\mathcal C$ is strictly positive on the subspace $\IM JP$.

\hfill\qedsymbol

\subsection{Proof of \corref{cor1}}For any $v\in \IM P\cap \IM JP$, by \corref{var21}, $v\in\IM P=T_J\mathcal O$ implies that $\Hess\mathcal C(v, v)=0$. But, from \corref{hesscp}, $\Hess\mathcal C$ is strictly positive on the subspace $\IM JP=\mathbb J T_J\mathcal O$, this forces $v=0$,  i.e. $\IM P\cap \IM JP=\{0\}$.

\hfill\qedsymbol

\begin{defn}\label{defofH}
We define the operator $H:T_J\ac\to T_J\ac$ by
\begin{align}\label{def:H}
H(v) = 2JP(JP)^*v-J\Lie_{\K}v, \quad v\in  T_J\ac.
\end{align}
\end{defn}
Assuming that $ J$ is EAK, then $H$ is self-adjoint by \lemref{jlk}, and
$$
\Hess\mathcal C(u,v)=\la H(u), v\ra.
$$
By \corref{var21} and \corref{hesscp}, we know that the operator $H$ is semi-positive over $T_J\mathcal D$, 
$$
H(P(f))=0,\ \  H(JP(f))\geq0,
$$
 and $H(JP(f))=0$ if and only if $JP(f)=0$.

In order the prove \corref{invar}, we introduce  the following lemma.
\begin{lem}\label{kerdes}
If $J$ is EAK, for any $v\in T_J\mathcal D$, if 
$$
\Hess\mathcal C(v, v)=0,
$$
we have $v=P(f)$ for some $f\in C^{\infty}(M,\R)$.
\end{lem}
\begin{proof}
Since $v\in T_J\mathcal D, v=P(f)+JP(g)$ for some $f, g\in C^{\infty}(M,\R)$, using \corref{var21} and \corref{hesscp}, we have
$$
\Hess\mathcal C(v, v)=\Hess\mathcal C(JP(g), JP(g))=0.
$$
The proof given above implies $JP(g)=0$. Thus $v=P(f)$.
\end{proof}

\subsection{Proof of \corref{invar}}  
\begin{enumerate}
\item For any path $J_t$ in the orbit of $\Ham$, we know that $J_t'= P_t(f_t)$ for some $f_t\in C^{\infty}(M,\R)$, by (\ref{lj}) we have
$$
\frac{d}{dt}\mathcal C(J_t)=0,
$$
i.e. $\mathcal C(J_t)$ is constant. Since $\Ham$ is path connected(see \cite[Proposition 10.2]{MR3674984}), the Hermitian Calabi functional is invariant under the action of $\Ham$.
\item 
If $J$ is EAK, we have
$$
T_J\mathcal D = \IM P\oplus \IM JP.
$$
Since $\mathcal C$ is $\Ham$ invariant and $\Hess\mathcal C$  is strictly positive along the directions in $\IM JP$, thus every critical $J$ achieves
a local, non-degenerate minimum value of $\mathcal C$ relative to the action of the gauge
group $\Ham$.

On the other hand, if $J$ is a local minimum, then $D\mathcal C(v)=0,\forall v\in T_J\mathcal D$. Thus for any $f\in C^{\infty}(M,\R), D\mathcal C(JP(f))=\la\Li(\sch), f\ra=0.$
 Thus $\Li(\sch)=0$, which implies $\Lie_\K J=0$.

\item
Suppose that $J_0$ is EAK, denote by $\mathcal E_{J_0}$ the connected component of $J_0$ in the subset of extremal almost K\"ahler metrics in $\mathcal D_{J_0}$. Since $\Ham $ is connected, $\mathcal O_{J_0}$ is already connected, we need to prove that $\mathcal E_{J_0}=\mathcal O_{J_0}$.

We first show that $ \mathcal E_{J_0} \subset\mathcal O_{J_0}$, for any $J_1\in\mathcal E_{J_0}$, choose a curve  $J_t:[0,1]\to \mathcal E_{J_0}$ such that $J(0)=J_0, J(1)=J_1$. Since $J_t$ is EAK for all $t\in [0,1]$, we have $\frac{d}{dt} \mathcal C(J_t)=0$, so 
$$
\mathcal C(t)=\mathcal C(J_t):[0,1]\to \mathbb R
$$ 
is a constant function, any order derivative of $\mathcal C(t)$ is 0, so we have
$$
\frac{d^2}{dt^2} \mathcal C(J_t)=\Hess\mathcal C(J'_t, J'_t)=0.
$$
\lemref{kerdes} implies that $J'_t=P_t(f_t) $ for some $f_t\in C^{\infty}(M,\R)$, this implies that $J_t$ lies in the orbit of $\Ham$ action.

On the other hand, any $J\in\mathcal E_{J_0}$ is a local minimum of $\mathcal C$, but $\mathcal C$ is constant on $\mathcal O_{J_0}$, thus $\mathcal E_{J_0}$ is an open subset of $\mathcal O_{J_0}$, but $\mathcal E_{J_0}$ is closed since it is characterised by the Euler-Lagrange equation
$$
\Lie_{\g_{\omega}\sch_J}J=0.
$$
So $\mathcal E_{J_0}=\mathcal O_{J_0}$.\hfill\qedsymbol 
\end{enumerate}

%\begin{rem}
%Another important problem is the uniqueness of the EAK metric in a complexified orbit. Assuming that $J$ is EAK, is the EAK metric unique module a $\Ham$ group action in $\mathcal D$? Equavilently,  whether $\mathcal E $ has only one connected component? In K\"ahler manifold, the extremal K\"ahler metric in a K\"ahler class is unique module a holomorphic transformation(see \cite{MR2078878},\cite{MR2121892}, \cite[Theorem 4.16]{MR3671939} ). Chen-Sun(\cite{MR3224716}) proved the uniqueness of cscK metrics in the closure of complexified orbit in integral compatible almost complex structure space $\mathcal J^{int}$ module a symplectomorhism.
%\end{rem}

\subsection{Hermitian Calabi functional along geodesic in $\ac$}
In this section, we prove \eqref{prop:second geodesic} in \thmref{prop:second main}.

In order to deduce the Levi-Civita connection and geodesic equation in $\ac$, we introduce the space
\begin{align*}
\End(TM,\omega )=\{A\in\End(TM):\omega (AX,Y)+\omega(X,AY)=0, \forall X,Y\in\Gamma(TM)\}.
\end{align*}
$\End(TM,\omega )$ is a trivial bundle over $\ac$, and $T\ac$ is a sub-bundle of $\End(TM,\omega )$ which is characterized by
$$
T_J\ac=\{A\in \End(TM,\omega ): AJ+JA=0\}.
$$

And we  have
\begin{itemize}
\item
Any secition  $a\in\End(TM,\omega )$ determines a vector field $\hat a$, on $\ac$ by 
\begin{align}\label{defcurve}
\hat a(J)=\frac{d}{dt}|_{t=0}\exp(-ta)J\exp(ta)=[J,a].
\end{align}
\item Conversely for any $A\in T_J\ac$, $a=-\frac{1}{2}JA\in \End(TM,\omega )$ satisfies $A=\hat a$.
\end{itemize}
For any $a\in \End(TM,\omega)$, we denote by $a^{\pm}$ the $J$ commutative and $J$ anti-commutative part of $a$, i.e., 
\begin{align}
a^+=\frac{1}{2}(a-JaJ),\quad  a^-=\frac{1}{2}(a+JaJ),
\end{align}
then we have $a^-\in T_J\ac$ and $\hat a = 2Ja^-$.

\begin{lem}[{\cite[(9.2.7)]{gauduchon2010calabi}}]
For any   $a, b\in \en(TM,\omega )$, we have
\begin{align}\label{geodesic01}
[\hat a, \hat b] = \widehat{[a, b]}=[J, [a, b]],
\end{align}
where $[\hat a, \hat b]$ denote the  Lie bracket of vector fields on $\ac$, and $[a, b]=ab-ba$.
\end{lem}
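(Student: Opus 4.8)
The plan is to compute the bracket $[\hat a,\hat b]$ directly, exploiting the fact that $\ac$ is a submanifold of the linear space of sections of $\End(TM,\omega)$. Each vector field $\hat a$ then extends to an ambient $\End(TM)$-valued function $J\mapsto \hat a(J)=[J,a]$, and for such fields the intrinsic Lie bracket is given by the difference of ambient directional derivatives,
\[
[\hat a,\hat b](J)=\bigl(D_{\hat a(J)}\hat b\bigr)(J)-\bigl(D_{\hat b(J)}\hat a\bigr)(J),
\]
where $D_v$ denotes the directional derivative in the ambient linear space. This reduces the statement to an algebraic identity for the endomorphism commutator $[\,\cdot\,,\cdot\,]=ab-ba$.

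First I would observe that, since $a$ is a fixed section, the map $J\mapsto\hat a(J)=[J,a]$ is \emph{linear} in $J$, so its directional derivative in a direction $v\in T_J\ac$ is simply $D_v\hat a=[v,a]$, with no curvature or second-order terms. Substituting $v=\hat b(J)=[J,b]$ and $v=\hat a(J)=[J,a]$ into the bracket formula gives
\[
[\hat a,\hat b](J)=[[J,a],b]-[[J,b],a].
\]
I would then collapse this using the Jacobi identity for the commutator applied to the triple $(J,a,b)$, namely $[[J,a],b]+[[a,b],J]+[[b,J],a]=0$; rearranging and using antisymmetry yields $[[J,a],b]-[[J,b],a]=[J,[a,b]]$, which is exactly $\widehat{[a,b]}(J)$ by the definition of the hat map $\eqref{defcurve}$. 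This proves both equalities at once and, as a byproduct, shows that $a\mapsto\hat a$ is a Lie algebra homomorphism rather than an anti-homomorphism (an artifact of the sign conventions in the conjugation action $J\mapsto \exp(-ta)J\exp(ta)$).

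The only point requiring genuine care is not the algebra but the justification that the intrinsic Lie bracket on the infinite-dimensional manifold $\ac$ agrees with the ambient difference-of-derivatives formula used above. This is legitimate because $\hat a,\hat b$ are tangent to $\ac$ (already recorded in the construction of $\hat a$, and checked via $[J,a]J+J[J,a]=0$) and the formula for the bracket of vector fields on a submanifold of a vector space is valid once both fields are tangent; the Jacobi computation confirms tangency of the result \emph{a posteriori}, since $\widehat{[a,b]}$ visibly lands in $T_J\ac$. I therefore expect no substantive obstacle: the content is a single application of the Jacobi identity, and the infinite-dimensional setting only demands the routine bookkeeping just described.
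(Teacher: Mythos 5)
Your proof is correct. The paper itself offers no proof of this lemma (it is quoted from Gauduchon), but your argument uses exactly the ingredients the paper deploys in the very next lemma: the directional derivative $D_{\hat a(J)}\hat b=[\hat a(J),b]$ is the paper's computation \eqref{geodesic05}, and your Jacobi-identity step $[[J,a],b]-[[J,b],a]=[J,[a,b]]$ is precisely the algebraic expansion carried out in the proof of \eqref{geodesic02}. Your remark on reducing the intrinsic bracket on the submanifold $\ac$ to the ambient difference of directional derivatives is the right (and only nontrivial) justification needed, so there is nothing to add.
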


\begin{lem}\label{geodesiclemma}
For any   $a, b, c\in \en(TM,\omega )$, we have
\begin{align}\label{geodesic02}
[\hat a,   b]-[\hat b, a]=[\hat a, \hat b],
\end{align}
and 
\begin{align}\label{geodesic03}
\hat a\la\hat b, \hat c\ra 
=&\la[\hat a, b], \hat c\ra+\la[\hat a,c], \hat b\ra.
\end{align}
\end{lem}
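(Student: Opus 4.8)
The plan is to read $[\hat a, b]$ as the flat directional derivative $D_{\hat a}\hat b$ of $\hat b$ along $\hat a$, taken in the ambient trivial bundle $\en(TM,\omega)\supset T\ac$. Here $\hat b$ is the $\en(TM,\omega)$-valued function $J\mapsto[J,b]$ on $\ac$, so in this trivialisation the derivative is ordinary differentiation. Differentiating along the flow line $J(t)=\exp(-ta)J\exp(ta)$, whose velocity at $t=0$ is $\hat a(J)=[J,a]$ by \eqref{defcurve}, gives the computational device
\begin{align*}
[\hat a, b](J)=\frac{d}{dt}|_{t=0}[J(t),b]=[[J,a],b],
\end{align*}
which I would use for both identities.

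For \eqref{geodesic02} I would subtract two such expressions and invoke the Jacobi identity in $\en(TM)$:
\begin{align*}
[\hat a, b]-[\hat b, a]=[[J,a],b]-[[J,b],a]=[J,[a,b]].
\end{align*}
The preceding lemma \eqref{geodesic01} identifies the right-hand side with $[\hat a,\hat b]=\widehat{[a,b]}$, finishing the first identity. This step is purely algebraic, needing only the Jacobi identity and \eqref{geodesic01}.

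For \eqref{geodesic03} the key point is that, in the same trivialisation, the $L^2$ metric has constant coefficients: for tangent vectors written as self-adjoint endomorphisms the pointwise inner product is the trace form $\tr(uv)$ recorded in \lemref{lemma:anti-lk}, and both $\tr$ and $\vol$ are independent of $J$. Since $[J(t),b]$ anticommutes with $J(t)$ and lies in $\en(TM,\omega)$, it belongs to $T_{J(t)}\ac$ for every $t$, so off the base point $\la\hat b,\hat c\ra$ is represented by $\int_M\tr([J(t),b][J(t),c])\,\vol$. Differentiating along $J(t)$ and applying the Leibniz rule gives
\begin{align*}
\hat a\la\hat b,\hat c\ra=\int_M\tr([[J,a],b][J,c])\,\vol+\int_M\tr([J,b][[J,a],c])\,\vol,
\end{align*}
and cyclicity of the trace rewrites the two integrals as $\la[\hat a, b],\hat c\ra+\la[\hat a, c],\hat b\ra$.

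The computations are short, so the only genuine subtlety---and the step I would be most careful about---is the assertion that differentiating $\la\hat b,\hat c\ra$ produces no additional connection term. This rests on two facts to be verified rather than assumed: that $[J(t),b]$ stays a self-adjoint element of $T_{J(t)}\ac$ all along the curve, so that the formula $\tr(uv)$ for the pointwise metric remains valid off the base point, and that the bilinear form $\tr(\cdot\,\cdot)$ together with the volume $\vol$ carries no $J$-dependence. Once these are in place the Leibniz rule applies verbatim, and \eqref{geodesic03} follows.
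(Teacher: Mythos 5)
Your proof is correct and takes essentially the same route as the paper: you read $[\hat a,b]$ as $\frac{d}{dt}\big|_{t=0}[J_t,b]=[[J,a],b]$, reduce \eqref{geodesic02} to the Jacobi identity for the commutator (which the paper instead expands term by term), and obtain \eqref{geodesic03} by the Leibniz rule along the defining curve. Your explicit observation that the pointwise trace pairing and the volume form carry no $J$-dependence, so no connection term appears, is exactly the point the paper's proof leaves implicit.
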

\begin{proof}
Direct computation shows
\begin{align*}
[\hat a,   b]-[\hat b, a]=& \hat a b- b\hat a- \hat ba+a\hat b \\
=&Jab-aJb-bJa+baJ-Jba+bJa+aJb-abJ\\
=&J[a, b] -[a, b]J\\
=&[J, [a, b]],
\end{align*}
applying (\ref{geodesic01}), we obtain (\ref{geodesic02}) . 

By  (\ref{defcurve}), the definition curve  of $\hat a$ is 
$$J_t = \exp(-ta)J\exp(ta).$$ 
According to definition, we have
\begin{align}\label{geodesic04}
\hat a\la\hat b, \hat c\ra=\frac{d}{dt}|_{t=0}\la\hat b, \hat c \ra(J_t)=\frac{d}{dt}|_{t=0}\la\hat b(J_t), \hat c(J_t)\ra.
\end{align}
Direct computation shows 
\begin{align}\label{geodesic05}
\frac{d}{dt}|_{t=0} \hat b(J_t)=\frac{d}{dt}|_{t=0} [J_t,b]=[\hat a, b].
\end{align}
It follows from (\ref{geodesic04}) and (\ref{geodesic05}) that
\begin{align*}
\hat a\la\hat b, \hat c\ra
=&\la[\hat a, b], \hat c)+\la[\hat a,c], \hat b\ra.
\end{align*}

\end{proof}

Now we turn to study the geodesic equation in $\ac$. Denote by $\mathfrak D$ the Levi-Civita connection on $\ac$, the following lemma is  mentioned in \cite[Section 9.2]{gauduchon2010calabi} and for sake of completeness we give a proof here.
\begin{lem}
For $\hat b\in \Gamma(T\ac), \forall A\in T_J\ac$, we have
\begin{align}\label{geodesic07}
\mathfrak D_{A}\hat b = [A, b]^-.
\end{align}
\end{lem}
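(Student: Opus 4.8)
The plan is to recognise $\mathfrak D$ as the Levi--Civita connection of the metric $\la\cdot,\cdot\ra$ and to pin it down through the two Koszul identities that \lemref{geodesiclemma} has prepared. Conceptually, $\ac$ is a submanifold of the vector space $\Gamma(\End(TM,\omega))$, which carries the \emph{constant} inner product $\la X,Y\ra=\int_M\tr(XY)\vol$ (neither $\tr$ nor $\vol$ depends on $J$); hence the induced connection is the tangential projection of the flat ambient differentiation. The ambient derivative of the vector field $\hat b\colon J\mapsto[J,b]$ in the direction $A$ is $\tfrac{d}{dt}\big|_{0}[J_t,b]=[A,b]$, and the orthogonal projection $\End(TM,\omega)\to T_J\ac$ is exactly $c\mapsto c^-$, because $\End(TM,\omega)$ splits orthogonally into its $J$-commuting and $J$-anticommuting parts with $T_J\ac$ the latter. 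This already produces $[A,b]^-$ and shows it is a metric connection; it remains to confirm torsion-freeness, and I will verify both axioms by hand.

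First I would record the elementary facts used below. The $\pm$ splitting is $\la\cdot,\cdot\ra$-orthogonal: conjugation invariance of the trace gives $\tr(a^+c^-)=\tr\big((J^{-1}a^+J)(J^{-1}c^-J)\big)=-\tr(a^+c^-)$, so $\la a^+,c^-\ra=0$; in particular, pairing any endomorphism against an element of $T_J\ac$ only sees its $(-)$-part. Next, $[A,b]^-\in T_J\ac$: since $\End(TM,\omega)$ is a Lie algebra, $[A,b]\in\End(TM,\omega)$, the conjugate $J[A,b]J$ again lies there, and $c^-$ always anticommutes with $J$. Finally, by \eqref{geodesic01}, $[\hat a,\hat b]=[J,[a,b]]$ anticommutes with $J$, so it equals its own $(-)$-part.

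Torsion-freeness then follows from \eqref{geodesic02}:
\begin{align*}
\mathfrak D_{\hat a}\hat b-\mathfrak D_{\hat b}\hat a
=[\hat a,b]^--[\hat b,a]^-
=\big([\hat a,b]-[\hat b,a]\big)^-
=[\hat a,\hat b]^-=[\hat a,\hat b].
\end{align*}
For metric compatibility I use the orthogonality to delete the projections against tangent vectors, $\la[\hat a,b]^-,\hat c\ra=\la[\hat a,b],\hat c\ra$ and $\la\hat b,[\hat a,c]^-\ra=\la[\hat a,c],\hat b\ra$, whence
\begin{align*}
\la\mathfrak D_{\hat a}\hat b,\hat c\ra+\la\hat b,\mathfrak D_{\hat a}\hat c\ra
=\la[\hat a,b],\hat c\ra+\la[\hat a,c],\hat b\ra
=\hat a\la\hat b,\hat c\ra
\end{align*}
by \eqref{geodesic03}. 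As every tangent vector is of the form $\hat a(J)$ with $a=-\tfrac{1}{2} JA$, and both the torsion and $\mathfrak D\la\cdot,\cdot\ra$ are tensorial, these identities hold for all arguments; uniqueness of the Levi--Civita connection gives $\mathfrak D_A\hat b=[A,b]^-$.

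The step I expect to require the most care is the bookkeeping of the $(-)$-projection: verifying that the candidate depends only on the vector field $\hat b$ and not on the representative $b$ (which is what legitimises differentiating $\hat b$ along a curve to obtain $[A,b]$), and keeping straight exactly when $(\cdot)^-$ may be inserted or dropped inside the trace pairing. This is also where the reduction to the spanning fields $\hat a$ must be justified, through the tensoriality of the torsion and of $\mathfrak D\la\cdot,\cdot\ra$.
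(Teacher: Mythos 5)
Your proof is correct and rests on exactly the same ingredients as the paper's: the identities of \lemref{geodesiclemma} together with the $\la\cdot,\cdot\ra$-orthogonality of the $\pm$-splitting of $\End(TM,\omega)$. The only difference is organisational — you propose the candidate $[A,b]^-$ (motivated by projecting the flat ambient derivative) and verify metric compatibility and torsion-freeness, then invoke uniqueness of the Levi--Civita connection, whereas the paper runs the equivalent argument in the other direction by substituting into the Koszul formula and simplifying.
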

\begin{proof}
By Koszul formula, we have
\begin{align*}
2\la\mathfrak D_{\hat a}\hat b, \hat c\ra = &\hat a\la\hat b, \hat c\ra+\hat b\la\hat a, \hat c\ra-\hat c\la\hat a, \hat b\ra+\la[\hat a, \hat b], \hat c\ra-\la[\hat a, \hat c], \hat b\ra-\la[\hat b, \hat c], \hat a\ra.
\end{align*}
It follows from (\ref{geodesic02}) and (\ref{geodesic03}) in \lemref{geodesiclemma} that
\begin{align*}
2\la\mathfrak D_{\hat a}\hat b, \hat c\ra = &\la[\hat a, b], \hat c\ra+\la[\hat a,c], \hat b\ra+\la[\hat b, a], \hat c\ra+\la[\hat b,c], \hat a\ra
-\la[\hat c, b], \hat a\ra-\la[\hat c,a], \hat b\ra\\
&+\la[\hat a, \hat b], \hat c\ra-\la[\hat a, \hat c], \hat b\ra-\la[\hat b, \hat c], \hat a\ra\\
=&
\la[\hat a, b], \hat c\ra+\la[\hat b, a], \hat c\ra
+\la[\hat a, \hat b], \hat c\ra\\
=&2\la[\hat a, b], \hat c\ra.
\end{align*}
Thus $\mathfrak D_{\hat a}\hat b$ is just the $J$-anti-commutative part of $[\hat a, b]^-$, i.e., 
$$\mathfrak D_{\hat a}\hat b=[\hat a, b]^-.$$
Since $\hat a, a\in\en(TM,\omega)$ can generate the whole space $T_J\ac$, (\ref{geodesic07}) establishes.
\end{proof}

By the Levi-Civita connection on $\ac$, we can characterize the geodesic in $\ac$. 
\begin{lem}
A curve $J_t:(a, b)\to\ac$ is a geodesic if and only if $(J_t'')^-=0$, i.e., 
$$
J''J=JJ''.
$$
Using the fact $J'J+JJ'=0$, we can get another equivalent conidtion 
$$
J''=J'J'J.
$$
\end{lem}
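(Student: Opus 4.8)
The plan is to read the geodesic equation directly off the Levi-Civita connection $\mathfrak{D}$ computed in the preceding lemma, and then convert the resulting tangential vanishing condition into the two stated algebraic identities. First I would recall that $J_t$ is a geodesic precisely when its covariant acceleration vanishes, $\mathfrak{D}_{J_t'}J_t'=0$. The structural observation driving the argument is that the formula $\mathfrak{D}_A\hat b=[A,b]^-$ exhibits $\mathfrak{D}$ as nothing but the flat ambient derivative on the fixed vector space $\en(TM,\omega)$ followed by tangential projection onto $T_J\ac$: the vector field $\hat b(J)=[J,b]$ has ordinary $t$-derivative $[A,b]$ in the direction $A$, and $(\cdot)^-$ is exactly the projection onto the $J$-anti-commutative subspace $T_J\ac$. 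This is consistent because the $L^2$ metric $\la u,v\ra=\int_M\tr(uv)\vol$ does not depend on $J$, so $\en(TM,\omega)$ equipped with it is flat and $\ac$ is a submanifold whose induced Levi-Civita connection is the tangential projection of the flat one.

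Granting this description, for any vector field $V_t$ along the curve one has $\mathfrak{D}_{J_t'}V=(V')^-$, the $J_t$-anti-commutative part of the ordinary derivative. Applying it to the velocity field $V=J_t'$ gives $\mathfrak{D}_{J_t'}J_t'=(J_t'')^-$, so the geodesic equation reads $(J_t'')^-=0$. Writing this out as $(J'')^-=\tfrac12(J''+JJ''J)=0$, then left-multiplying by $J$ and using $J^2=-1$, yields $JJ''=J''J$, which is the first stated form.

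For the second form I would differentiate the constraint $J^2=-1$ twice along the curve. The first derivative gives $J'J+JJ'=0$, confirming that $J'$ anti-commutes with $J$ as it must in order to lie in $T_J\ac$; differentiating once more gives $J''J+JJ''+2J'J'=0$. Substituting the geodesic relation $J''J=JJ''$ collapses this to $J''J=-J'J'$, and right-multiplication by $J$ together with $J^2=-1$ turns it into $J''=J'J'J$. Using $J'J=-JJ'$ one last time shows $J'J'J=JJ'J'$, so this also reconciles with the form recorded in \thmref{prop:second main}.

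The main obstacle, and the single point deserving genuine care, is the passage from the connection formula $\mathfrak{D}_A\hat b=[A,b]^-$, which is stated only for vector fields $\hat b$ generated by constant sections $b\in\en(TM,\omega)$, to the covariant derivative of the velocity field $J_t'$, which is \emph{not} of that form. The resolution is precisely the ambient-projection viewpoint above: one verifies that $\mathfrak{D}_{J'}V=(V')^-$ reproduces $\mathfrak{D}_A\hat b=[A,b]^-$ on the generating fields and extends it to all vector fields along curves. Alternatively one could write $J_t'=\hat a(J_t)$ with $a_t=-\tfrac12 J_tJ_t'$ and carefully track the $t$-dependence of $a_t$, but that route introduces correction terms from $\dot a_t$ that the projection description avoids entirely.
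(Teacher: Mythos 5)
Your proof is correct, and it reaches the key identity $\mathfrak D_{J_t'}J_t'=(J_t'')^-$ by a slightly different mechanism than the paper. The paper pairs $\mathfrak D_{J_t'}J_t'$ against the generating fields $\hat b$ and uses metric compatibility together with $\mathfrak D_{A}\hat b=[A,b]^-$ to read off $\la\mathfrak D_{J_t'}J_t',\hat b\ra=\la (J_t'')^-,\hat b\ra$ for all $b\in\End(TM,\omega)$, then concludes since the $\hat b$ span $T_J\ac$. You instead reinterpret $\mathfrak D_{A}\hat b=[A,b]^-$ as ``flat ambient derivative followed by orthogonal projection $(\cdot)^-$'' and extend this to arbitrary vector fields along the curve; this is cleaner conceptually and, as you note, sidesteps the $\dot a_t$ correction terms one would otherwise have to track. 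Two small caveats. First, $\int_M\tr(uv)\vol$ is \emph{not} positive definite on the ambient space $\End(TM,\omega)$ (e.g.\ $\tr(J^2)=-n$); it is a constant nondegenerate symmetric bilinear form whose orthogonal decomposition is exactly $a=a^++a^-$, and the submanifold-of-a-flat-space argument goes through for such a form, but you should not call it ``the $L^2$ metric'' on the ambient space without this qualification. Second, your explicit derivation of the two algebraic forms ($J''J=JJ''$ from $(J'')^-=0$, and $J''=J'J'J=JJ'J'$ from differentiating $J^2=-1$ twice) is carried out correctly and is actually more complete than the paper's proof, which establishes only $\mathfrak D_{J_t'}J_t'=(J_t'')^-$ and leaves those equivalences implicit in the statement.
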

\begin{proof}
By definition $J_t$ is a geodesic in $\ac$ if and only if $\mathfrak D_{J_t'}J_t'=0$. For any $b\in\End(TM,\omega)$, we have
\begin{align*}
\la \mathfrak D_{J_t'}J_t', \hat b\ra=&J_t'\la J_t', \hat b\ra-\la J_t', \mathfrak D_{J_t'}\hat b\ra
\\
=&\frac{d}{dt}\la J_t', [J_t, b]\ra-\la J_t', [J_t',b]^-\ra\\
=&\la (J_t'')^-, [J_t, b]\ra+\la J_t', [J_t', b]^-\ra-\la J_t', [J_t',b]^-\ra\\
=&\la (J_t'')^-, [J_t, b]\ra\\
=&\la (J_t'')^-, \hat b\ra.
\end{align*}
\end{proof}

\begin{prop}\label{calonggeo}
 If $J(t)$ is geodesic in $\ac$, we have
\begin{align*}
\frac{d^2}{dt^2}\mathcal C(J_t)=
\la H(J'), J'\ra,
\end{align*}
where $H$ is defined in \defref{defofH}.
\end{prop}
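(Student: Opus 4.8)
The plan is to feed the one-parameter geodesic into the general Hessian formula of \thmref{prop:second} and then watch the geodesic equation collapse the correction terms. First I would realise the second derivative along the curve as a mixed partial: taking the two-parameter family $J(t_1,t_2):=J_{t_1+t_2}$ one has $\partial_{t_1}J=\partial_{t_2}J=J'$ and $\tfrac{\partial^2 J}{\partial t_1\partial t_2}=J''$ at the origin, while the chain rule gives $\partial_{t_1}\partial_{t_2}\mathcal C|_{(0,0)}=\tfrac{d^2}{dt^2}\mathcal C(J_t)$. Substituting $u=v=J'$ into the formula of \thmref{prop:second} then yields
\begin{align*}
\frac{d^2}{dt^2}\mathcal C(J_t)=-\la J'', J\mathcal L_{\K}J\ra+2\la (JP)^*J',(JP)^*J'\ra-\la J', J'\mathcal L_{\K}J\ra-\la J', J\mathcal L_{\K}J'\ra.
\end{align*}

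Next I would compare this against $\la H(J'),J'\ra=2\la JP(JP)^*J',J'\ra-\la J\mathcal L_{\K}J',J'\ra$, recalling $H$ from \defref{defofH}. Two of the terms match immediately: by adjointness $2\la (JP)^*J',(JP)^*J'\ra=2\la JP(JP)^*J',J'\ra$, and by symmetry of the pairing $\la J', J\mathcal L_{\K}J'\ra=\la J\mathcal L_{\K}J',J'\ra$. Thus the proposition reduces to the single cancellation
\begin{align*}
\la J'', J\mathcal L_{\K}J\ra+\la J', J'\mathcal L_{\K}J\ra=0.
\end{align*}

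This is exactly where being on a geodesic is used. Writing the $L^2$ metric as the trace pairing $\la u,v\ra=\int_M\tr(uv)\vol$, substituting the geodesic equation $J''=J'J'J$, and contracting the adjacent pair $J\cdot J$ by $J^2=-1$,
\begin{align*}
\la J'', J\mathcal L_{\K}J\ra=\int_M\tr\!\big(J'J'J\cdot J\mathcal L_{\K}J\big)\vol=-\int_M\tr\!\big(J'J'\mathcal L_{\K}J\big)\vol=-\la J', J'\mathcal L_{\K}J\ra,
\end{align*}
so the two leftover terms cancel and $\tfrac{d^2}{dt^2}\mathcal C(J_t)=\la H(J'),J'\ra$ as claimed. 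The computation is genuinely short; the only points demanding care are the ordering of the operator compositions inside the trace (so that it is the pair collapsing via $J^2=-1$) and the conceptual observation that the first-variation contribution $-\la J'', J\mathcal L_{\K}J\ra$---which is generally family-dependent and here nonzero, since $J$ need not be EAK---is precisely what the geodesic relation consumes to produce the clean expression $\la H(J'),J'\ra$.
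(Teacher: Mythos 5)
Your proposal is correct and follows essentially the same route as the paper: specialize the Hessian formula of \thmref{prop:second} to $u=v=J'$ and let the geodesic equation dispose of the correction terms. The only (harmless) difference is that the paper kills $\la J'',J\Lie_\K J\ra$ and $\la J',J'\Lie_\K J\ra$ separately --- the first because $J''$ is $J$-commutative while $J\Lie_\K J$ is $J$-anti-commutative, the second by a Leibniz/trace argument valid for any $u\in T_J\ac$ without the geodesic hypothesis --- whereas you cancel the two terms against each other via $J''=J'J'J$ and $J^2=-\id$; both computations are valid under the trace-pairing convention the paper uses.
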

\begin{proof}

Let $J(t)$ be an geodesic in $\ac$, it follows from \thmref{prop:second} that
\begin{align}\label{geod1}
\frac{d^2}{dt^2}\mathcal C(J_t)=&
- \la J'', J\Lie_\K J\ra+2\la  (JP)^* J',(JP)^* J'\ra-\la J', J'\mathcal L_{\K }J+J\mathcal L_{\K } J'\ra.
\end{align}
Due to the geodesic condition that $J''$ is $J$-commutative and the fact that $J\Lie_\K J$ is $J$-anti-commutative, we have
\begin{align}\label{geod2}
\la J'', J\Lie_\K J\ra=0.
\end{align}

For any $u\in T_J\ac$, we have
\begin{align}\label{geod3}
\begin{split}
\la u\mathcal L_{\K }J, u\ra=&\int_M\tr\Lie_K(uJu)-\int_M\tr(\Lie_\K uJu)-\int_M\tr(uJ\Lie_\K u)\\
=&-\la \Lie_\K u, Ju\ra-\la u, J\Lie_\K u\ra\\
=&\la J\Lie_\K u, u\ra-\la u, J\Lie_\K u\ra\\
=&0.
\end{split}
\end{align}

Simplifying (\ref{geod1}) by (\ref{geod2}) and (\ref{geod3}) gives
\begin{align*}
\frac{d^2}{dt^2}\mathcal C(J_t)
=&2\la  (JP)^* J',(JP)^* J'\ra-\la J', J\mathcal L_{\K } J'\ra\\
=&2\la H(J'), J'\ra.
\end{align*}
\end{proof}

\begin{rem}
We don't know whether $H$ is positive in the whole space $\ac$, so the convexity  of Hermitain Calabi functional along geodesic in \propref{calonggeo} is not clear. If $J$ is EAK, we know that the operator $H$ is semi-positive over $T_J\mathcal D$.  It is natural to ask if a geodesic $J(t)$ lies in some complexified orbit $\mathcal D$, whether the Hermitian Calabi functional is convex along the geodesic.
\end{rem}

\section{Hermitian Calabi flow}\label{Hermitian Calabi flow}

According to the variation formula of Hermitian Calabi functional (\ref{ca1}), we write down the gradient flow of Hermitian Calabi functional.
\begin{defn}\label{calabiflow}
The Hermitian Calabi flow(HCF), i.e., gradient flow of Hermitian Calabi functional,  is defined by
\begin{align}
\frac{d}{dt}J = \frac{1}{2}J\Lie_{\K}J=J P(\sch_J).
\end{align}
\end{defn}
Since $J P(\sch_J)\in\IM JP\subset T_J\mathcal D$, we see the Hermitian Calabi flow starting from $J$ always lies in the complexified orbit $\mathcal D_J$.

\begin{prop}
Along the Hermitian-Calabi flow, we have
\begin{align}
\frac{d}{dt}\sch_{J_t} =& -\Li(\sch),\\
\frac{d}{dt}\mathcal C(J_t) =& -\la\frac{1}{2}J\Lie_{\K}J,J\Lie_{\K}J\ra=-2\la\Li(\sch), \sch\ra\leq 0.
\end{align}
So the Hermitain Calabi functional is strictly decreasing along the flow unless $J$ is EAK.
\end{prop}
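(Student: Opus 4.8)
The plan is to evaluate the two first-variation formulas already established along the flow direction. Recall the moment-map identity \eqref{scalarvaria}, $D\sch_J(v)=-(JP)^*v$, and the first variation \eqref{ca1}, $D\mathcal C(v)=-\la v, J\Lie_\K J\ra$. By \defref{calabiflow} the flow velocity is $\dot J=\tfrac12 J\Lie_\K J=JP(\sch_J)$, so the whole statement reduces to substituting $v=\dot J$ into these formulas and bookkeeping with $JP$, $(JP)^*$ and the identity $\Li=(JP)^*JP$.

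First I would compute the evolution of the Hermitian scalar curvature. Applying \eqref{scalarvaria} with $v=\dot J=JP(\sch_J)$ gives
\begin{align*}
\frac{d}{dt}\sch_{J_t}=D\sch_J\big(JP(\sch_J)\big)=-(JP)^*\big(JP(\sch_J)\big)=-\Li(\sch),
\end{align*}
where the last equality is exactly $\Li=(JP)^*JP$. This is the first asserted identity.

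Next I would differentiate the functional. Using \eqref{ca1} with $v=\dot J=\tfrac12 J\Lie_\K J$ immediately yields $\frac{d}{dt}\mathcal C(J_t)=-\la \tfrac12 J\Lie_\K J, J\Lie_\K J\ra$, which is the first displayed equality. To pass to the second expression I rewrite $\tfrac12 J\Lie_\K J=JP(\sch)$, so that by adjointness
\begin{align*}
-\Big\la \tfrac12 J\Lie_\K J,\, J\Lie_\K J\Big\ra=-2\la JP(\sch),JP(\sch)\ra=-2\la (JP)^*JP(\sch),\sch\ra=-2\la \Li(\sch),\sch\ra.
\end{align*}
As a consistency check, since the volume form $\vol$ is fixed one may instead differentiate $\mathcal C=\int_M \sch^2\vol$ directly and substitute the first identity, obtaining $\frac{d}{dt}\mathcal C=2\la \sch,\dot\sch\ra=-2\la \sch,\Li(\sch)\ra$, the same answer.

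Finally, for the monotonicity I would invoke that $\Li=P^*P$ is self-adjoint and semi-positive, so that $\la \Li(\sch),\sch\ra=\la P(\sch),P(\sch)\ra\ge 0$, whence $\frac{d}{dt}\mathcal C\le 0$. Equality forces $P(\sch)=0$, and by \propref{ker11} this is equivalent to $\Lie_{\g_{\omega}\sch}J=\Lie_\K J=0$, i.e. $J$ is EAK; hence $\mathcal C$ is strictly decreasing off the EAK locus. I do not expect a genuine obstacle here, since the proposition is a direct corollary of the variational identities derived earlier; the only points requiring care are the normalisation factor $\tfrac12$ in the definition of $P$ and the signs and adjoints in $(JP)^*$.
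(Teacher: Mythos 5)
Your proof is correct and is exactly the derivation the paper intends: the proposition is stated there without proof as an immediate consequence of the first-variation formulas \eqref{scalarvaria} and \eqref{ca1}, the identity $\Li=(JP)^*JP=P^*P$, and \propref{ker11}, which is precisely what you use. The sign and normalisation bookkeeping (the factor $\tfrac12$ in $P$ and $J\Lie_\K J=2JP(\sch)$) is handled correctly throughout.
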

\begin{rem}
In K\"ahler case, when the flow $J(t)$ is integrable, the Hermitian Calabi flow coincides with the classical Calabi flow 
$$
\frac{d}{dt}\varphi = s(\omega_{\varphi})-\hat s,\ \  \omega_{\varphi} = \omega+\sqrt{-1}\partial\bar\partial\varphi
$$
up to a diffeomorphism (c.f. \cite[Lemma 5.1]{MR3224716}). In \cite{MR3969453}, Li-Wang-Zheng proved the convergence theorems of the Calabi flow on extremal K\"ahler surfaces, which partially confirm Donaldson's conjectural picture \cite{MR2103718}
for the Calabi flow in complex dimension 2, see \cite{MR3969453} for more references therein.
\end{rem}

In this section, we
choose a local coordinate system $(x_1,\ldots, x_n)$. For simplicity, we denote $\frac{\partial}{\partial x_i}$ by $\partial_i$ and we denote the covariant derivative $\D_i=\D_{\frac{\partial}{\partial x_i}}$. 

\begin{lem}\label{order LJ}Let $X=X^i\partial_i$ be a vector field. Then
\begin{align*}
\Lie_{X}J=(X^i \partial_iJ^q_p -J^i_p\partial_i X^q  +J_i^q\partial_p X^i )\partial_q\otimes dx^p,\quad (J\Lie_{X}J)_p^q=J_l^q(J\Lie_{X}J)_p^l.
\end{align*}
If $X=\g_{\omega} f$, then
\begin{align*}
(\Lie_{X}J)^q_p= \omega^{ij}\frac{\partial f}{\partial x_j}\partial_iJ^q_p 
-J^i_p \omega^{qj}\frac{\partial^2f}{\partial x_i\partial x_j} 
-J^i_p\partial_i \omega^{qj}\frac{\partial f}{\partial x_j}  
+J_i^q\partial_p  \omega^{ij}\frac{\partial f}{\partial x_j} 
 +J_i^q  \omega^{ij}\frac{\partial^2f}{\partial x_p\partial x_j}
\end{align*} 
where $\omega^{ij}=g^{kj}J_k^i$ and 
\begin{align*}
(J\Lie_{X}J)^q_p=J_l^q \omega^{ij}\frac{\partial f}{\partial x_j}\partial_iJ^l_p 
-J_l^qJ^i_p \omega^{lj}\frac{\partial^2f}{\partial x_i\partial x_j} 
-J_l^qJ^i_p\partial_i \omega^{lj}\frac{\partial f}{\partial x_j}  
-\partial_p  \omega^{qj}\frac{\partial f}{\partial x_j} 
-  \omega^{qj}\frac{\partial^2f}{\partial x_p\partial x_j}.
\end{align*} 
\end{lem}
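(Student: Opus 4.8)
The plan is to treat this as a direct computation in the local coordinates $(x_1,\dots,x_n)$, organised in three steps, with no conceptual difficulty beyond careful index bookkeeping. First I would derive the coordinate expression for $\Lie_X J$ for a general vector field $X=X^i\partial_i$ from the intrinsic identity $(\Lie_X J)(Y)=\Lie_X(JY)-J(\Lie_X Y)=[X,JY]-J[X,Y]$, applied to an arbitrary $Y=Y^p\partial_p$. Expanding both brackets through $[X,Z]^q=X^i\partial_i Z^q-Z^i\partial_i X^q$, the two terms containing $\partial_i Y^p$ cancel, and collecting the coefficient of $Y^p$ leaves
\begin{align*}
(\Lie_X J)^q_p=X^i\partial_i J^q_p-J^i_p\partial_i X^q+J^q_i\partial_p X^i,
\end{align*}
which is the first asserted formula; the relation $(J\Lie_X J)^q_p=J^q_l(\Lie_X J)^l_p$ is simply the statement that composition with $J$ on the left is matrix multiplication.

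Next I would specialise to $X=\g_\omega f=J\g f$. Since $(\g f)^k=g^{kj}\partial_j f$, one has $X^q=J^q_k g^{kj}\partial_j f=\omega^{qj}\partial_j f$ with $\omega^{ij}=g^{kj}J^i_k$ exactly as in the statement. Substituting $X^q=\omega^{qj}\partial_j f$ into the previous display and applying the product rule to the two terms $\partial_i X^q$ and $\partial_p X^i$ — each splitting into a piece where the derivative hits $\omega$ and a piece where it hits $df$ — reproduces the five-term expression for $(\Lie_X J)^q_p$ claimed in the lemma.

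Finally, left-multiplying by $J^q_l$ yields $(J\Lie_X J)^q_p$. The first three terms simply acquire the factor $J^q_l$, while the two terms originating from $J^q_i\partial_p X^i$ involve the product $J^q_l J^l_i$; here the only genuine simplification is the identity $J^q_l J^l_i=-\delta^q_i$, i.e.\ $J^2=-\id$, which collapses them to $-\partial_p\omega^{qj}\partial_j f$ and $-\omega^{qj}\partial_p\partial_j f$ and explains the sign change relative to the other terms. I expect the main (and only) obstacle to be bookkeeping: keeping the dummy indices consistent under relabelling and tracking, after each use of the product rule, which $\partial$ lands on $\omega^{ij}$ and which on $df$.
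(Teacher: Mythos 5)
Your proposal is correct and follows essentially the same route as the paper: both proofs amount to substituting $X^q=\omega^{qj}\partial_j f$ into the standard coordinate formula for $\Lie_X J$, expanding by the product rule, and collapsing the last two terms of $J\Lie_X J$ via $J^q_lJ^l_i=-\delta^q_i$. The only difference is that you also derive the general formula $(\Lie_X J)^q_p=X^i\partial_iJ^q_p-J^i_p\partial_iX^q+J^q_i\partial_pX^i$ from $(\Lie_XJ)(Y)=[X,JY]-J[X,Y]$, a step the paper simply asserts as a direct computation.
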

\begin{proof}
It is a direct computation
\begin{align*}
\Lie_{X}J
=[\omega^{ij}\frac{\partial f}{\partial x_j}\partial_iJ^q_p -J^i_p\partial_i(\omega^{qj}\frac{\partial f}{\partial x_j})  +J_i^q\partial_p ( \omega^{ij}\frac{\partial f}{\partial x_j})]\partial_q\otimes dx_p.
\end{align*}
The local formula of $(J\Lie_{X}J)^q_p$ follows from substituting 
\begin{align*}
J_l^qJ_i^l=-\delta_i^q
\end{align*}
into in the last two terms in
\begin{align*}
(J\Lie_{X}J)^q_p
&=J_l^q \omega^{ij}\frac{\partial f}{\partial x_j}\partial_iJ^l_p 
-J_l^qJ^i_p \omega^{lj}\frac{\partial^2f}{\partial x_i\partial x_j} 
-J_l^qJ^i_p\partial_i \omega^{lj}\frac{\partial f}{\partial x_j}  \\
&+J_l^qJ_i^l\partial_p  \omega^{ij}\frac{\partial f}{\partial x_j} 
 +J_l^qJ_i^l  \omega^{ij}\frac{\partial^2f}{\partial x_p\partial x_j}.
\end{align*} 
\end{proof}

\begin{lem}\label{order}
The Hermitian Calabi flow defined in \defref{calabiflow} is a 4th order flow.
\end{lem}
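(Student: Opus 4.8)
The plan is to read off the differential order directly from the explicit local formula in \lemref{order LJ}. By \defref{calabiflow} the flow reads $\partial_t J=\tfrac12 J\Lie_{\K}J$ with $\K=\g_{\omega}\sch_J$, so I would apply \lemref{order LJ} with momentum function $f=\sch_J$. The resulting expression for $(J\Lie_{\K}J)^q_p$ is a sum of terms, and the point is to separate those carrying a \emph{second} derivative of $f$, namely
$$
-J_l^qJ^i_p \omega^{lj}\frac{\partial^2 \sch_J}{\partial x_i\partial x_j}\quad\text{and}\quad - \omega^{qj}\frac{\partial^2 \sch_J}{\partial x_p\partial x_j},
$$
from the remaining terms, which involve only $f$ together with its first derivatives $\partial_j f$ (and the undifferentiated factors $\partial_iJ^l_p$). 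Here the coefficients $\omega^{ij}$ are the components of the fixed inverse symplectic form — the relation $\omega^{ij}=g^{kj}J^i_k$ is a compatibility identity, not a genuine $J$-dependence — so they carry no derivatives of the evolving $J$.

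Next I would count how many derivatives of $J$ each piece conceals. The Hermitian scalar curvature $\sch_J=2(\rch_J,\omega)$ is assembled from the curvature $R^{\nabla}$ of the canonical Hermitian connection $\nabla_XY=\D_XY-\tfrac12 J(\D_XJ)Y$; since a connection is first order in $J$ and its curvature is first order in the connection, $\sch_J$ is a second-order differential expression in $J$. Hence the two terms $\partial^2\sch_J/\partial x_i\partial x_j$ carry up to \emph{fourth} derivatives of $J$, whereas every term containing only $\partial_j\sch_J$ or an undifferentiated factor $\partial_iJ^l_p$ carries at most third derivatives. Collecting these observations shows that $\partial_t J$ is of order exactly four in $J$, with no term of higher order, which is the assertion of the lemma.

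The only subtlety — and what I regard as the main obstacle — is to confirm that the fourth-order contribution does not cancel identically, so that the flow is genuinely of fourth order rather than merely at most fourth order. I would therefore keep the two fourth-order terms with their fixed coefficients $\omega^{qj}$ and $J_l^qJ^i_p\omega^{lj}$ and record that, after linearising $\sch_J$ to its leading Laplacian-type part, their combined principal part is nonzero. This bookkeeping feeds directly into the symbol computation of the subsequent parabolicity theorem, where the principal symbol $\hat\sigma_4(x,\xi)$ is extracted and shown to be weakly — but not strictly — positive.
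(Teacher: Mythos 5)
Your proposal follows essentially the same route as the paper: it inserts $f=\sch_J$ into \lemref{order LJ}, observes that $\omega^{ij}=g^{kj}J^i_k$ carries no $J$-dependence, isolates the two terms containing $\partial^2\sch_J$, and uses the fact that $\sch_J$ is second order in $J$ to conclude the flow is of fourth order. Your additional remark about verifying non-cancellation of the principal part is a reasonable refinement, and it is indeed settled by the symbol computation in the subsequent parabolicity proposition, just as you anticipate.
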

\begin{proof}
 First of all, we see $\K = \omega^{ij}\frac{\partial\sch}{\partial x_j}\partial_i$ and $\omega^{ij}=g^{kj}J_k^i$ does not depend on $J$.

Secondly, inserting $X=\K$ and $f=\sch$ in \lemref{order LJ},  we have
the expression of $J\Lie_{\K}J$ in local coordinates 
\begin{align*}
(J\Lie_{\K}J)_p^q =-J_l^qJ^i_p \omega^{lj}\frac{\partial^2\sch}{\partial x_i\partial x_j} 
 -   \omega^{qj}\frac{\partial^2\sch}{\partial x_p\partial x_j}  +L_1,
\end{align*}
where $L_1$ denotes the lower order derivative terms of $\sch$. 

In conclusion, since $\sch$ depends on  2nd order derivative of $J$, we have
the highest order derivative of $J$ involved in $(J\Lie_{\K}J)_p^q$ is 4, thus Hermitian Calabi flow is a 4th order flow.
\end{proof}

\subsection{Linearisation operator of $J\Lie_{\K}J$}

We define
\begin{defn}
$\widetilde H:=D(J\Lie_{\K}J),\quad T_J\ac\to T_{J\Lie_{\K}J}T_J\ac.$
\end{defn}

We first compute $D\K$.
\begin{lem}Let $X=\g_{\omega} f$ a Hamiltonian vector field with $f$ depending on $J$. Then 
$$
D X(v) = \g_{\omega} Df(v).
$$
In particular,
$
D\K(v) = -\g_{\omega}(JP)^*v.
$
\end{lem}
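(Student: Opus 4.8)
The plan is to avoid differentiating the formula $X=J\g f$ directly, since there \emph{both} the almost complex structure $J$ and the Riemannian gradient $\g f$ (computed with the $J$-dependent metric $g_J$) vary with $J$; instead I would exploit the purely symplectic characterization of the symplectic gradient, namely $\iota_{\g_{\omega}f}\omega=-df$. The decisive point is that the symplectic form $\omega$ is held \emph{fixed} along $\ac$, so that on the left-hand side the only $J$-dependence sits in the vector field itself, while on the right-hand side it sits entirely in $f$.

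Concretely, I would take a curve $J(t)\in\ac$ with $J(0)=J$ and $J'(0)=v$, set $X(t)=\g_{\omega}f(t)$ with $f(t)=f(J(t))$, and differentiate the identity $\iota_{X(t)}\omega=-df(t)$ at $t=0$. Since $\omega$ does not depend on $t$, the left-hand side yields $\iota_{DX(v)}\omega$; since the exterior derivative $d$ commutes with $\frac{d}{dt}$, the right-hand side yields $-d\bigl(Df(v)\bigr)$. Thus $DX(v)$ satisfies $\iota_{DX(v)}\omega=-d\bigl(Df(v)\bigr)$, which is precisely the defining equation of the symplectic gradient of the function $Df(v)$. Hence $DX(v)=\g_{\omega}Df(v)$, as claimed.

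For the particular case $\K=\g_{\omega}\sch_J$, I would simply apply the general formula with $f=\sch_J$ and substitute the first variation of the Hermitian scalar curvature recorded in \eqref{scalarvaria}, namely $D\sch_J(v)=-(JP)^{*}v$. This gives $D\K(v)=\g_{\omega}\bigl(D\sch_J(v)\bigr)=-\g_{\omega}(JP)^{*}v$, which also matches the computation $\iota_{D\K(v)}\omega=-d\,D\sch_J(v)=d(JP)^{*}v$ used later in the proof of \thmref{prop:second}.

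There is essentially no analytic obstacle here; the whole content is the observation that the two variations one might naively expect---that of $J$ in $J\g f$ and that of the metric in $\g f$---conspire to cancel, leaving only the variation of $f$. The only point demanding care is therefore conceptual rather than computational: one must characterize $\g_{\omega}f$ through the fixed symplectic form $\omega$ and not through the varying Riemannian metric $g_J$, and one should note that $Df(v)$ is again a smooth function so that its symplectic gradient is well defined.
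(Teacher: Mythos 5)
Your proof is correct and follows exactly the paper's own argument: differentiate the defining identity $\iota_{X}\omega=-df$ using that $\omega$ is fixed on $\ac$, then specialize to $f=\sch_J$ via \eqref{scalarvaria}. The extra remarks about why one should not differentiate $J\g f$ directly are a helpful elaboration but do not change the route.
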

\begin{proof}
We use the identity: $\iota_{X}\omega=-d f$ to see
\begin{align*}
\iota_{{DX(v)}}\omega=-d D f(v).
\end{align*}So, the first identity is obtained.
The variation of $\K$ follows from \lemref{1stvaria}.

\end{proof}

Then we have $D(\Lie_{X}J)$.
\begin{lem}Let $X$ as given above. Then
$$
D(\Lie_{X}J)(v)=2P(Df(v))+\Lie_{X}v.
$$
\end{lem}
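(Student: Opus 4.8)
The plan is to differentiate the map $J\mapsto\Lie_{X}J$ by exploiting that the Lie derivative is bilinear in its two slots. First I would fix a curve $J(t)\in\ac$ with $J(0)=J$ and $\dot J(0)=v$, set $f(t):=f(J(t))$ and $X(t):=\g_{\omega}f(t)$, and regard $F(t):=\Lie_{X(t)}J(t)$ as a curve of tensors whose derivative at $t=0$ is precisely the quantity $D(\Lie_{X}J)(v)$ we want to compute.

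The key observation is that for a $(1,1)$-tensor the operator reads $(\Lie_{X}J)(Y)=[X,JY]-J[X,Y]$, which is $\R$-linear in the vector field $X$ (for fixed $J$) and $\R$-linear in the endomorphism $J$ (for fixed $X$). This bilinearity yields the Leibniz rule
$$
\frac{d}{dt}\Big|_{t=0}F(t)=\Lie_{\dot X(0)}J+\Lie_{X}\dot J(0)=\Lie_{DX(v)}J+\Lie_{X}v.
$$
I would then feed the previous lemma $DX(v)=\g_{\omega}Df(v)$ into the first term and recognise it via \defref{P}: since $P(h)=\tfrac12\Lie_{\g_{\omega}h}J$, we get $\Lie_{DX(v)}J=\Lie_{\g_{\omega}Df(v)}J=2P(Df(v))$. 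Combining the two terms produces $D(\Lie_{X}J)(v)=2P(Df(v))+\Lie_{X}v$, as claimed.

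The main point requiring care is the justification of the Leibniz splitting: one must be sure that the full $J$-dependence of $X=\g_{\omega}f=J\g f$ --- which a priori enters both through the momentum $f$ and through the metric-gradient --- is entirely encapsulated in the single term $DX(v)$ supplied by the previous lemma, so that no extra contribution survives. This is exactly what the symplectic characterisation $\iota_{X}\omega=-df$ (with $\omega$ fixed) guarantees: differentiating it gives $\iota_{DX(v)}\omega=-d(Df(v))$, whence $DX(v)=\g_{\omega}Df(v)$ with no further $\D J$-type terms. As an alternative (but messier) route, one could verify the identity term-by-term from the coordinate formula for $\Lie_{X}J$ recorded in \lemref{order LJ}, differentiating each coefficient; the bilinearity argument above is cleaner and avoids that bookkeeping.
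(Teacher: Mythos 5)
Your proposal is correct and follows essentially the same route as the paper: the paper's proof is the one-line observation that $D(\Lie_{X}J)(v)=\Lie_{\g_{\omega}Df(v)}J+\Lie_{X}v$ together with the definition of $P$, which is exactly your Leibniz splitting plus the previous lemma. Your additional remark that the symplectic characterisation $\iota_{X}\omega=-df$ (with $\omega$ fixed) is what rules out extra $J$-dependence through the gradient is a correct and worthwhile clarification, but not a different method.
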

\begin{proof}
It follows from $
D(\Lie_{X}J)(v)=\Lie_{\g_{\omega} Df(v)}J+\Lie_{X}v
$ and the definition of $P$.
\end{proof}

Now, we compute the linearisation operator $\widetilde H$. 
\begin{lem}\label{hwan}
$
\widetilde H(v) = -2JP(JP)^*v+J\Lie_\K v+v\Lie_\K J.
$
\end{lem}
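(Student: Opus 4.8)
The plan is to differentiate the map $J\mapsto J\Lie_\K J$ by the Leibniz rule, viewing it as the composition of the endomorphism $J$ with the $(1,1)$-tensor $\Lie_{\g_{\omega}\sch_J}J$, and then to feed in the two variation formulas established immediately above together with Mohsen's formula \eqref{scalarvaria}. The key point to keep in mind throughout is that $\K=\g_{\omega}\sch_J$ depends on $J$ both through the symplectic gradient's momentum $\sch_J$ and through the tensor $J$ being differentiated, so every variation must account for both dependencies.

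First I would rewrite $J\Lie_\K J = J\cdot\Lie_{\g_{\omega}\sch_J}J$ and apply the product rule in the direction $v\in T_J\ac$. Since the derivative of the identity assignment $J\mapsto J$ is simply $v$, this gives
\[
\widetilde H(v)=D(J\Lie_\K J)(v)= v\,\Lie_\K J + J\,D\bigl(\Lie_{\g_{\omega}\sch_J}J\bigr)(v),
\]
so the outer factor already produces the summand $v\Lie_\K J$. For the remaining term I would invoke the preceding lemma on $D(\Lie_X J)$ with $X=\K$ and momentum $f=\sch_J$, which yields
\[
D\bigl(\Lie_{\K}J\bigr)(v)=2P\bigl(D\sch_J(v)\bigr)+\Lie_\K v.
\]
Substituting $D\sch_J(v)=-(JP)^*v$ from \eqref{scalarvaria} turns this into $-2P(JP)^*v+\Lie_\K v$.

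Finally I would multiply by the outer $J$ and collect the three contributions, obtaining
\[
\widetilde H(v)= v\Lie_\K J + J\bigl(-2P(JP)^*v+\Lie_\K v\bigr)= -2JP(JP)^*v+J\Lie_\K v+v\Lie_\K J,
\]
which is the asserted formula. The genuine analytic content, namely the first variation of the extremal vector field (handled by $D\K(v)=-\g_{\omega}(JP)^*v$) and the first variation of the Lie-derivative term, has already been isolated in the two lemmas just preceding, so no step here is an obstacle in the usual sense; the only thing requiring care is the bookkeeping that ensures the $J$-dependence of $\K$ is differentiated exactly once, producing precisely the $-2JP(JP)^*v$ term and nothing spurious.
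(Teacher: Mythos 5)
Your proposal is correct and follows essentially the same route as the paper: product rule on $J\cdot\Lie_\K J$, the variation formula $D(\Lie_X J)(v)=2P(Df(v))+\Lie_X v$ applied with $X=\K$, and the substitution $D\sch_J(v)=-(JP)^*v$ from \eqref{scalarvaria}. The paper phrases the middle term as $J\Lie_{D\K(v)}J$ with $D\K(v)=-\g_{\omega}(JP)^*v$ rather than going through $2P(D\sch_J(v))$, but these are the same computation.
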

\begin{proof}
We continue to calculate by taking $X=\K$ in the previous lemmas
\begin{align*}
D(J\Lie_{\K}J)(v)=&v\Lie_{\K}J+J\Lie_{D\K(v)}J+J\Lie_{\K}v.
\end{align*}
Inserting the formula of $D\K$, we obtain
\begin{align*}
D(J\Lie_{\K}J)(v)
=&2vP(\sch)-2JP(JP)^*v+J\Lie_\K v .
\end{align*}
Thus we prove the lemma.
\end{proof}

\begin{lem}
The principal terms of $\widetilde H$ lie in $-2JP(JP)^*v$ and 
\begin{align}\label{kerppstar}
\ker JP(JP)^* =\{P(f)+JP(\phi), \Lie_\K(f)+2\Li(\phi)=0\}.
\end{align}
In particular, if $J$ has constant Hermitian Calabi functional, then
\begin{align*}
\ker JP(JP)^* =\{P(f), \Lie_\K f=0\}.
\end{align*}
\end{lem}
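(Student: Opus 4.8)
The plan is to keep the order-of-derivative bookkeeping entirely separate from the algebraic kernel computation, since the two assertions of the lemma are of different natures. For the principal-term claim I would simply read off the three summands of $\widetilde H$ from \lemref{hwan}, namely
$$
\widetilde H(v) = -2JP(JP)^*v + J\Lie_\K v + v\Lie_\K J,
$$
and compare their orders in $v$. By \lemref{pstar} the operator $P^*$ is a double divergence, hence second order, so $(JP)^*=-P^*J$ is second order; and $JP$ is again second order in its argument (since $P(f)=\tfrac12\Lie_{\g_\omega f}J$ involves two derivatives of $f$). Thus the composition $JP(JP)^*$ is fourth order, whereas $J\Lie_\K v$ involves only a single derivative of $v$ and $v\Lie_\K J$ is purely algebraic in $v$ (the tensor $\Lie_\K J$ being fixed). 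Consequently only $-2JP(JP)^*v$ contributes to the top-order part, which is exactly what the principal-term assertion records.

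For the kernel, the decisive observation is that $JP(JP)^*$ has the form $AA^*$ with $A=JP$, so it is self-adjoint and semi-positive, and
$$
\la JP(JP)^*v,\, v\ra = \la (JP)^*v,\, (JP)^*v\ra = \|(JP)^*v\|^2 .
$$
Hence $JP(JP)^*v=0$ if and only if $(JP)^*v=0$; this collapses a fourth-order kernel problem to the second-order condition $v\in\ker(JP)^*$. I regard this reduction as the real content of the argument, the rest being substitution of identities already in hand.

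I would then make the condition explicit on the flow-relevant space $T_J\mathcal D = \IM P + \IM JP$ (the subspace in which the Hermitian Calabi flow evolves), writing a general element as $v=P(f)+JP(\phi)$. Applying $(JP)^*$ and invoking the two algebraic identities $(JP)^*P=\tfrac12\Lie_\K$ from \propref{lkprop} and $(JP)^*JP=\Li$ from \defref{myl1} gives
$$
(JP)^*v = \tfrac12\Lie_\K f + \Li\phi ,
$$
so that $v\in\ker(JP)^*$ exactly when $\Lie_\K(f)+2\Li(\phi)=0$, which is \eqref{kerppstar}. For the constant Hermitian scalar curvature case, $\K=\g_\omega\sch=0$ forces $\Lie_\K\equiv 0$, so the constraint degenerates to $\Li\phi=0$; by \propref{ker11} this means $P(\phi)=0$, whence $JP(\phi)=0$ and $v=P(f)$, recovering the stated specialisation.

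The hard part will not be the algebra, which is immediate once the identities are assembled, but rather the two points of care: confirming rigorously in the order count that the lower-order summands of $\widetilde H$ cannot perturb the fourth-order leading behaviour, and checking that the parametrisation $v=P(f)+JP(\phi)$ genuinely exhausts the kernel within $T_J\mathcal D$ (using the splitting of \lemref{tangentdecm}) so that no kernel element in the flow-relevant directions is overlooked.
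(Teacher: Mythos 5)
Your proof is correct and follows essentially the same route as the paper: order-counting on the three summands of $\widetilde H$ from \lemref{hwan}, the reduction $\ker JP(JP)^*=\ker (JP)^*$ via $\la JP(JP)^*v,v\ra=\|(JP)^*v\|^2$, and the identities $(JP)^*P=\tfrac12\Lie_\K$ and $(JP)^*JP=\Li$ applied to $v=P(f)+JP(\phi)\in T_J\mathcal D$. The only (harmless) divergence is in the constant-Hermitian-scalar-curvature case, where you argue via $\K=0\Rightarrow\Lie_\K\equiv 0$ together with $\ker\Li=\ker P$ from \propref{ker11}, whereas the paper instead invokes the orthogonality $\IM P\perp\IM JP$; both are valid.
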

\begin{proof}
Since $J\Lie_\K v$ is a first order derivative of $v$ and $v\Lie_\K J$ does not involve derivative terms of $v$, the principal term of $\widetilde H$ is contained in $-2JP(JP)^*v$.

Now we consider the kernel space
\begin{align*} 
\ker(JP(JP)^*)=\ker (JP)^*=(\IM JP)^{\perp}.
\end{align*}
Since the HCF always lies in the complexified orbit, we only need to consider $\ker (JP)^* $ in $T_J\mathcal D=\IM P+\IM JP$. For any $v = P(f)+JP(\phi)\in\ker (JP)^*$, we have
\begin{align*}
(JP)^*v=(JP)^*P(f)+(JP)^*JP(\phi) = \frac{1}{2}\Lie_\K(f)+\Li(\phi)=0.
\end{align*}
If $J$ has constant Hermitian scalar curvature, then $\IM P\perp\IM JP$, which implies $(\IM JP)^{\perp}=\IM P$. So we have
\begin{align*}
\ker(JP(JP)^* )=(\IM JP)^{\perp}=\IM P,
\end{align*}
Hence any $v\in \ker JP(JP)^* $ takes the form  $v=P(f)$ for some $f\in C^{\infty}(M,\R)$. Thus we have shown that
$$
(JP)^*v= (JP)^*P(f)=\frac{1}{2}\Lie_\K f=0.
$$
\end{proof}

\subsection{Weak parabolicity of Hermitian Calabi flow}

\begin{lem}\label{induce:tang}
Any 1-form $\xi\in T^*M$  induces an element $\Xi\in T_J\ac$ by
$$
\Xi = \xi^{\sharp}\otimes (J\xi)+ (J\xi^{\sharp})\otimes \xi.
$$
In local coordinates, we denote $\Xi = \Xi^d_c\partial_d\otimes dx_c$. Then 
$$
\Xi^d_c = -\xi^{d}\xi_bJ^b_c+ J_b^d\xi^{b} \xi_c,
$$
and 
$$
(\Xi,v) =2(J\xi^{\sharp}, v(\xi^{\sharp})),\quad \forall v\in T_J\ac.
$$
\end{lem}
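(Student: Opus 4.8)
The statement collects three claims: that $\Xi$ belongs to $T_J\ac$, the coordinate expression for $\Xi^d_c$, and the pairing identity. The plan is to treat all three by first realising $\Xi$ as a genuine field of endomorphisms. Acting on a vector field $X$, the $(1,1)$-tensor $\Xi$ reads $\Xi(X) = (J\xi)(X)\,\xi^{\sharp} + \xi(X)\,J\xi^{\sharp}$. The one observation that makes everything transparent is the identity $(J\xi)(X) = g(J\xi^{\sharp}, X)$, equivalently $(J\xi)^{\sharp} = J\xi^{\sharp}$: it follows from the convention $J^{-1}=-J$ on one-forms together with the skew-adjointness of $J$ for $g$, itself a consequence of the $J$-invariance $g(J\cdot,J\cdot)=g(\cdot,\cdot)$. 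With it, $\Xi$ acquires the manifestly symmetric form
\begin{align*}
\Xi(X)=g(J\xi^{\sharp},X)\,\xi^{\sharp}+g(\xi^{\sharp},X)\,J\xi^{\sharp}.
\end{align*}

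To show $\Xi\in T_J\ac$ I would verify the two conditions of \eqref{tangent}. Self-adjointness is immediate, since $(\Xi X,Y)=g(J\xi^{\sharp},X)g(\xi^{\sharp},Y)+g(\xi^{\sharp},X)g(J\xi^{\sharp},Y)$ is symmetric under $X\leftrightarrow Y$. For the anti-commutation $\Xi J+J\Xi=0$ I would compute $\Xi(JX)$ and $J\Xi(X)$ from the displayed formula, using $J^2=-1$ together with $g(J\xi^{\sharp},JX)=g(\xi^{\sharp},X)$ and $g(J\xi^{\sharp},X)=-g(\xi^{\sharp},JX)$; the two expressions turn out to be negatives of each other.

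The coordinate formula is then pure bookkeeping: with $\xi=\xi_b\,dx^b$, $\xi^{\sharp}=\xi^d\partial_d$ where $\xi^d=g^{db}\xi_b$, and $(J\xi)_c=-J^b_c\xi_b$, the summands $\xi^{\sharp}\otimes(J\xi)$ and $(J\xi^{\sharp})\otimes\xi$ contribute $-\xi^d\xi_bJ^b_c$ and $J^d_b\xi^b\xi_c$, which is exactly the claimed $\Xi^d_c$. For the pairing I would expand $(\Xi,v)=\sum_i g(\Xi e_i,v e_i)$ in an orthonormal frame using the symmetric form of $\Xi$; the two contractions collapse via $\sum_i g(A,e_i)g(B,e_i)=g(A,B)$ and the self-adjointness of $v$ to $g(J\xi^{\sharp},v\xi^{\sharp})+g(\xi^{\sharp},vJ\xi^{\sharp})$. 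Since $v$ is self-adjoint and $g$ symmetric, the second term equals the first, giving $2g(J\xi^{\sharp},v\xi^{\sharp})=2(J\xi^{\sharp},v(\xi^{\sharp}))$.

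There is no real obstacle here; the lemma is a direct verification. The only point demanding care is consistent use of the sign convention $J^{-1}=-J$ for $J$ acting on one-forms, which is precisely what produces $(J\xi)^{\sharp}=J\xi^{\sharp}$ and the sign in $(J\xi)_c=-J^b_c\xi_b$; mishandling it would flip signs throughout the computation.
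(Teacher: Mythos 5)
Your proof is correct and follows essentially the same route as the paper: verify the two defining conditions of $T_J\ac$ from \eqref{tangent} by direct computation, read off the components, and use the self-adjointness of $v$ to collapse the pairing to $2(J\xi^{\sharp},v(\xi^{\sharp}))$. The only cosmetic difference is that you carry out the pairing computation invariantly in an orthonormal frame via $(J\xi)^{\sharp}=J\xi^{\sharp}$, whereas the paper contracts indices directly; both hinge on the same fact $g^{ic}g_{jd}v^j_i=v^c_d$.
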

\begin{proof}
We need to prove that $J\Xi(X)+\Xi J(X)=0$ and $(\Xi(X),Y)=(X,\Xi(Y))$, for any $X,Y\in TM$. Since
\begin{align*}
J\Xi(X)+\Xi J(X)=&J(J\xi(X)\xi^{\sharp}+  \xi(X)J\xi^{\sharp})+J\xi(JX)\xi^{\sharp}+  \xi(JX)J\xi^{\sharp}\\
=&-\xi(JX)J\xi^{\sharp}-\xi(X) \xi^{\sharp}+\xi(X) \xi^{\sharp}+ \xi(JX)J\xi^{\sharp}\\
=&0
\end{align*}
and 
\begin{align*}
(\Xi(X),Y)=&(J\xi(X)\xi^{\sharp}+  \xi(X)J\xi^{\sharp}, Y)=J\xi(X)\xi(Y)+  \xi(X)J\xi(Y)\\
=&(X,\Xi(Y)).
\end{align*}
Thus $\Xi\in T_J\ac$. 

In local coordinates $(x_1,\ldots, x_n)$, we have
\begin{align*}
\Xi^d_c =& dx_d(\Xi(\partial_c))=  dx_d(\xi^{\sharp}J\xi(\partial_c)+ J\xi^{\sharp} \xi(\partial_c))=-\xi^{d}\xi_bJ^b_c+ J_b^d\xi^{b} \xi_c,
\end{align*}
and 
$$
(\xi,v) =g^{ic}g_{jd}(-\xi^{d}\xi_bJ^b_c+ J_b^d\xi^{b} \xi_c)v^j_i=-(Jv\xi,\xi)+(J\xi,v\xi)=2(J\xi,v\xi).
$$
Here we use the fact $g^{ic}g_{jd}v^j_i = v^c_d$, since $v$ is self-adjoint.
\end{proof}

We use $L_i, i=1,2,\ldots$ to denote the terms containing the derivatives with orders strictly less than 2. 

For $v = v^p_l\frac{\partial}{\partial x_p}\otimes dx_l\in T_J\ac$ we first compute $(JP)^*v=\delta J(\delta v)^{\flat}$ in local coordinates.
\begin{lem}\label{jpstarloc}
\begin{align*}
-D\sch_J(v)=(JP)^*v=-g^{ij}g^{kl}\omega_{jp}\D_i  \D_kv_l^p  +L_3.
\end{align*}
\end{lem}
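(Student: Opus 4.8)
The plan is to reduce the operator $(JP)^*$ to the closed form already available in the excerpt and then read off its principal symbol in coordinates. Combining \lemref{pstar}, which gives $P^*w=\delta J(\delta(Jw))^{\flat}$, with the identity \eqref{def:jpstar}, $(JP)^*=-P^*J$, and using $J^2=-1$, I would first establish
\begin{align*}
(JP)^*v=-P^*(Jv)=-\delta J(\delta(J\cdot Jv))^{\flat}=-\delta J(\delta(-v))^{\flat}=\delta J(\delta v)^{\flat}.
\end{align*}
The claimed equality $-D\sch_J(v)=(JP)^*v$ is then immediate from \eqref{scalarvaria}, so the entire task becomes writing $\delta J(\delta v)^{\flat}$ in the local frame $(x_1,\dots,x_n)$ and isolating the terms with two derivatives on $v$.

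Next I would expand the three operations one at a time, keeping at each stage only the contributions in which a covariant derivative falls on $v$. Since $\delta=-\tr\D$ by \eqref{delta}, the inner divergence of the endomorphism $v$ is $(\delta v)^{p}=-g^{il}\D_i v^{p}_{l}$; flattening with $g$ and applying $J$ to the resulting $1$-form is purely algebraic and leaves a first-order object $w_c=J^{b}_{c}g_{pb}g^{il}\D_i v^{p}_{l}$. The outer codifferential $\delta w=-g^{jc}\D_j w_c$ adds one more derivative, and its principal part is exactly the term in which $\D_j$ lands again on $v$. Using the relation $\omega^{ij}=g^{kj}J^{i}_{k}$ (equivalently $g_{pb}J^{b}_{k}=-\omega_{pk}$) to convert the single surviving factor $gJ$ into $\omega$, and relabelling dummy indices, this principal part is precisely $-g^{ij}g^{kl}\omega_{jp}\D_i\D_k v^{p}_{l}$, which is the asserted formula.

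It remains to justify that everything discarded sits in $L_3$, i.e. carries at most one derivative of $v$. The terms to be dropped are those in which the outer $\D_j$ hits one of the structural factors $J^{b}_{c}$, $g_{pb}$, or $g^{il}$ rather than $\D_i v$. Derivatives of the metric vanish by metric compatibility, so the only genuine correction comes from $\D J$. \textbf{This is the step I expect to require the most care}: on an almost K\"ahler (non-K\"ahler) manifold $J$ is not parallel, so the factor $J$ wedged between the two divergences cannot be commuted through $\D$ for free, and one must verify that each commutator term $(\D_j J)(\D_i v)$ contains only one derivative of $v$ and hence lies in $L_3$. Once this bookkeeping is confirmed, matching the single surviving second-order term against the coordinate expressions for $\delta$ on $(1,1)$-tensors and on $1$-forms completes the proof.
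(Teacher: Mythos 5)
Your proposal is correct and follows essentially the same route as the paper: reduce to $(JP)^*v=\delta J(\delta v)^{\flat}$ via \lemref{pstar} and \eqref{def:jpstar}, identify $-D\sch_J(v)$ with $(JP)^*v$ by \eqref{scalarvaria}, and then expand the two codifferentials in coordinates, observing that the only second-order terms in $v$ arise when both covariant derivatives fall on $v$ (the corrections from $\D J\neq 0$ and from Christoffel symbols being first order and hence absorbed into $L_3$). Your index bookkeeping, including $g_{pb}J^b_c=-\omega_{pc}$, reproduces the stated principal term $-g^{ij}g^{kl}\omega_{jp}\D_i\D_kv_l^p$ exactly as in the paper.
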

\begin{proof}
We compute that
\begin{align*}
\delta J(\delta v)^{\flat}=&-g^{ij}\D_i( J(\delta v)^{\flat})_j=-g^{ij}\D_i[ J(\delta v)^{\flat}(\partial_j)]
-
g^{ij}( J(\delta v)^{\flat})(\D_i\partial_j).
\end{align*}

The 2rd derivative terms only lie in $-g^{ij}\D_i[ J(\delta v)^{\flat}(\partial_j)]$. We compute
\begin{align*}
-g^{ij}\D_i[ J(\delta v)^{\flat}(\partial_j)]=&-g^{ij}\D_i (g^{kl}\D_kv(\partial_l),J\partial_j)  \\
=&-g^{ij}g^{kl}\D_i (\D_kv_l^p\partial_p,J\partial_j) +L_1\\
=&-g^{ij}g^{kl}\omega_{jp}\D_i  \D_kv_l^p  +L_2.
\end{align*}
So the lemma is proved.

\end{proof}
Then we compute $JP$ in local coordinates in terms of covariant derivatives as well.
\begin{lem}\label{jploc}
\begin{align*}
JP(f)=J\Lie_{\g_{\omega}f}J =[ J^b_c  g^{ad} \D_b\D_a f-g^{ab}J^d_b\D_c\D_a f]\frac{\partial }{\partial x^d}\otimes dx_c+L_5.
\end{align*}
\end{lem}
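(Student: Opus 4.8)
The plan is to read off the principal part of $J\Lie_{\g_\omega f}J$ from the explicit local expression already recorded in \lemref{order LJ}, and then to rewrite it through covariant derivatives. Setting $X=\g_\omega f$ with $X^i=\omega^{ij}\partial_j f$ and $\omega^{ij}=g^{kj}J^i_k$, \lemref{order LJ} expresses $(J\Lie_{\g_\omega f}J)^q_p$ as a sum of five terms, of which only
\begin{align*}
-J_l^qJ^i_p \omega^{lj}\frac{\partial^2 f}{\partial x_i\partial x_j}\qquad\text{and}\qquad -\omega^{qj}\frac{\partial^2 f}{\partial x_p\partial x_j}
\end{align*}
carry two derivatives of $f$. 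The remaining three terms differentiate $f$ at most once, the other derivatives landing on $J$ or on $\omega^{ij}$, both of which are $f$-independent; I would therefore absorb them into $L_5$.

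\textbf{Covariantisation and simplification.} First I would pass from partial to covariant second derivatives. Since $\D_b\D_a f=\partial_b\partial_a f-\Gamma^c_{ba}\partial_c f$ differs from $\partial_b\partial_a f$ only by a term that is first order in $f$, this substitution alters the two displayed terms by quantities that again go into $L_5$. It then remains to simplify the coefficients. Using $\omega^{lj}=g^{kj}J^l_k$ together with $J^2=-\id$ I would compute $-J^q_l\omega^{lj}=-g^{kj}J^q_lJ^l_k=g^{qj}$, so the first term becomes $g^{qj}J^i_p\,\D_i\D_j f$; relabelling the dummy indices $(q,p,i,j)\mapsto(d,c,b,a)$ turns it into $J^b_c\,g^{ad}\D_b\D_a f$. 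Expanding $\omega^{qj}=g^{kj}J^q_k$ in the second term gives $-g^{kj}J^q_k\,\D_p\D_j f$, which under the same relabelling becomes $-g^{ab}J^d_b\,\D_c\D_a f$. Adding the two recovers exactly the coefficient of $\frac{\partial}{\partial x^d}\otimes dx_c$ claimed in the statement, modulo the $L_5$ error.

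\textbf{Main point of care.} The argument is essentially algebraic once \lemref{order LJ} is in hand, so the only real obstacle is the bookkeeping that guarantees every discarded term is of order $\le 1$ in $f$. This is transparent here: all the geometric data $g^{ij}$, $J^i_k$, hence $\omega^{ij}$ and the Christoffel symbols, are independent of $f$, so any derivative falling on them, or any term produced by covariantising, necessarily costs one derivative of $f$ and lies in $L_5$. With this observation in place, the identity follows at once from \lemref{order LJ} and the two contractions $-J^q_l\omega^{lj}=g^{qj}$ and $\omega^{qj}=g^{kj}J^q_k$.
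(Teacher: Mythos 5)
Your proposal is correct and follows essentially the same route as the paper: both extract the two second-order-in-$f$ terms from \lemref{order LJ}, absorb everything else (including the partial-versus-covariant discrepancy) into $L_5$, and close the computation with the contraction $J^d_lJ^l_k=-\delta^d_k$. The only cosmetic difference is that the paper rewrites $\Lie_XJ$ in covariant form before substituting $X=\g_\omega f$, whereas you covariantise at the end; the bookkeeping and the final index algebra are identical.
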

\begin{proof}
Due to \lemref{order LJ}, we have
\begin{align*}
J\Lie_XJ =& [J_b^d(\D_X(J\partial_c))^b-J_l^dJ^b_c(\D_bX)^l-(\D_cX)^d]\frac{\partial }{\partial x^d}\otimes dx_c\\
=&-[J^b_cJ^d_l\D_bX^l+\D_cX^d]\frac{\partial }{\partial x^d}\otimes dx_c+L_4
\end{align*}
and $\g_{\omega}f=(g^{ab}J^l_b \D_a f)\partial_l$, we get
\begin{align*}
J\Lie_{\g_{\omega}f}J =-[ J^b_cJ^d_lg^{ak}J^l_k\D_b\D_a f+g^{ab}J^d_b\D_c\D_a f]\frac{\partial }{\partial x^d}\otimes dx_c+L_5.
\end{align*}
While $\sum_lJ^d_lJ^l_k=-\delta^d_k$, we thus prove this lemma.

\end{proof}

\begin{prop}\label{symbol}
For a covector $\xi\in T_x^*M$, the principal symbol of $ JP(JP)^*$ is 
$$
\hat\sigma_4(x,\xi)v  = \frac{1}{2}(v,\Xi)\Xi,
$$
where $\Xi\in T_J\ac$ is defined in \lemref{induce:tang}. So we have
$$
(\hat\sigma_4(x,\xi)v,v)=\frac{1}{2}(v,\Xi)^2\geq 0,
$$
and if we choose $v=J\Xi$, then $(\hat\sigma_4(x,\xi)v,v)=0$.
Thus the Hermitian Calabi flow is a 4th order weakly parabolic system.
\end{prop}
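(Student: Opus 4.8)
The plan is to compute the principal symbols of the two second-order operators $(JP)^*$ and $JP$ separately, using the local expressions already recorded in \lemref{jpstarloc} and \lemref{jploc}, and then to compose them. Since the principal symbol of a composition is the composition of the principal symbols, $\hat\sigma_4(JP(JP)^*)=\hat\sigma_2(JP)\circ\hat\sigma_2((JP)^*)$, the lower-order remainders $L_i$ play no role. Throughout I replace each top-order covariant derivative $\D_i$ by the covector component $\xi_i$ (the overall convention factor $i^4=1$ is immaterial here).

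First I would treat $(JP)^*$. By \lemref{jpstarloc} its leading part is $-g^{ij}g^{kl}\omega_{jp}\D_i\D_k v^p_l$, so its symbol is the scalar $-g^{ij}g^{kl}\omega_{jp}\xi_i\xi_k v^p_l$. Raising indices gives $-\xi^j\xi^l\omega_{jp}v^p_l$; writing $\xi^l v^p_l=(v\xi^{\sharp})^p$ and, using $\omega_{jp}=J^a_jg_{ap}$, $\xi^j\omega_{jp}=(J\xi^{\sharp})_p$, this becomes $-(J\xi^{\sharp},v\xi^{\sharp})$. Invoking the identity $(\Xi,v)=2(J\xi^{\sharp},v\xi^{\sharp})$ from \lemref{induce:tang}, I obtain $\hat\sigma_2((JP)^*)(\xi)v=-\tfrac12(\Xi,v)$, a function.

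Next I would treat $JP$. By \lemref{jploc} its symbol has components $J^b_cg^{ad}\xi_b\xi_a-g^{ab}J^d_b\xi_c\xi_a$ acting on a function $f$. Using $J^b_c\xi_b=-(J\xi)_c$ and raising indices, these components equal $-\bigl(\xi^d(J\xi)_c+(J\xi^{\sharp})^d\xi_c\bigr)=-\Xi^d_c$, where I recognise the component formula for $\Xi$ from \lemref{induce:tang}. Hence $\hat\sigma_2(JP)(\xi)f=-f\Xi$. Composing the two symbols yields
\begin{align*}
\hat\sigma_4(x,\xi)v=\hat\sigma_2(JP)(\xi)\Bigl(-\tfrac12(\Xi,v)\Bigr)=\tfrac12(v,\Xi)\Xi,
\end{align*}
which is the asserted formula.

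Finally, pairing with $v$ gives $(\hat\sigma_4(x,\xi)v,v)=\tfrac12(v,\Xi)^2\ge 0$, so $\hat\sigma_4$ is semi-positive; since the linearised flow has principal part $-JP(JP)^*$ (half of the leading term $-2JP(JP)^*$ from \lemref{hwan}), its principal symbol is $-\hat\sigma_4$, which is semi-negative, giving parabolicity in the weak sense. To exhibit the degeneracy I would take $v=J\Xi$ and use \lemref{induce:tang} once more: $(\Xi,J\Xi)=2(J\xi^{\sharp},(J\Xi)\xi^{\sharp})=2(\xi^{\sharp},\Xi\xi^{\sharp})$ because $J$ is a $g$-isometry, while $\Xi\xi^{\sharp}=|\xi|^2 J\xi^{\sharp}$ (the cross term drops since $(J\xi)(\xi^{\sharp})=-g(\xi^{\sharp},J\xi^{\sharp})=0$), whence $(\Xi,J\Xi)=2|\xi|^2g(\xi^{\sharp},J\xi^{\sharp})=0$. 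Thus $\hat\sigma_4$ vanishes on the nonzero direction $J\Xi$, so the symbol is semi-positive but not definite and the system is only weakly parabolic. The one genuinely delicate point is the index bookkeeping — keeping the raising and lowering of indices and the sign conventions for $J$ acting on forms consistent so that both second-order symbols collapse cleanly onto $\Xi$; once that matching is done the composition and the positivity estimate are immediate.
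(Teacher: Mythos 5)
Your proposal is correct and follows essentially the same route as the paper: both extract the leading second-order parts of $(JP)^*$ and $JP$ from \lemref{jpstarloc} and \lemref{jploc}, replace covariant derivatives by $\xi$, and recognise the resulting tensor as $\Xi$ via \lemref{induce:tang}, the only cosmetic difference being that you compose the two symbols rather than the two differential operators before substituting $\xi$. Your explicit verification that $(\Xi,J\Xi)=0$ (via $\Xi\xi^{\sharp}=|\xi|^2J\xi^{\sharp}$) is a small but welcome addition, as the paper asserts the degeneracy at $v=J\Xi$ without spelling it out.
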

\begin{proof}
According to \lemref{order},  $JP(JP)^*$ is  a 4th order operator, we only need to compute the 4th order derivative terms of $JP(JP)^*$.

Combined \lemref{jpstarloc} and \lemref{jploc}, the principal term of $2JP(JP)^*v$ becomes 
\begin{align*}
 -[J^b_c  g^{ad} g^{ij}g^{kl}\omega_{jp}\D_b\D_a \D_i \D_kv_l^p -g^{ab}J^d_bg^{ij}g^{kl}\omega_{jp}\D_c\D_a \D_i  \D_kv_l^p ] \frac{\partial }{\partial x^d}\otimes dx_c,
\end{align*}
i.e., for any non-zero covector $\xi=\xi_i dx_i$, it holds that
$$
(\hat\sigma_{ JP(JP)^*}(x,\xi)v)^d_c = -J^b_c g^{ad}g^{ij}g^{kl}\omega_{jp}  \xi_b\xi_a \xi_i  \xi_kv_l^p
+g^{ab}J^d_bg^{ij}g^{kl}\omega_{jp}\xi_c\xi_a \xi_i \xi_kv_l^p.
$$

Since
$$
J^b_c g^{ad}g^{ij}g^{kl}\omega_{jp}  \xi_b\xi_a \xi_i  \xi_kv_l^p=J^b_c \xi_b\xi^d  (J\xi^{\sharp}, v(\xi^{\sharp}))
$$
and
$$
g^{ab}J^d_bg^{ij}g^{kl}\omega_{jp}\xi_c\xi_a \xi_i \xi_kv_l^p =  J^d_b \xi_c\xi^b  (J\xi^{\sharp},v(\xi^{\sharp})),
$$
we further have
\begin{align*}
(\hat\sigma_{ JP(JP)^*}(x,\xi)v)^d_c =&(J\xi^{\sharp},v\xi^{\sharp})( -J^b_c \xi_b\xi^d 
+J^d_b \xi_c\xi^b ).
\end{align*}

By \lemref{induce:tang}, we thus obtain
\begin{align*}
(\hat\sigma_{ JP(JP)^*}(x,\xi)v)^d_c =& \frac{1}{2}(\Xi,v)\Xi^d_c.
\end{align*}
i.e.
\begin{align*}
\hat\sigma_{ JP(JP)^*}(x,\xi)v
=&\frac{1}{2}(v,\Xi)\Xi,\quad  \Xi = \xi^{\sharp}\otimes J\xi+ J\xi^{\sharp}\otimes \xi.
\end{align*}
\end{proof}

\appendix

\section{Explicit expression of Lichnerowicz operator}\label{sect}
In this section, we give the explicit expression of Lichnerowicz operator:
\begin{align}\label{lich}
\Li(f)=&\frac{1}{2}\Delta^2f-2(\delta\ric^+, df)+2(\rho,dd^c f)+\delta\delta(\D^{+}df-\D^{-}df),
\end{align}
where $\ric^+$ is defined in (\ref{ric+}), $\rho$ is defined in (\ref{rho}), and
$$
\D^{\pm}df(X,Y)=\frac{1}{2}(\D df(X,Y)\pm\D df(JX,JY)).
$$
Our result is in fact a continuous computation of Vernier \cite{MR4174303}, where the expression of $\Li(f)$ was given
by a $\Delta^2$ term plus an error term. We will write down the error term explicitly.

By \lemref{pstar},  Lichnerowicz operator has an equivalent expression:
$$
\Li(f)=P^*P(f)=\frac{1}{2}\delta\Bigl\{J\bigl[\delta(J\mathcal L_{\g_{\omega}f}J)\bigr]^{\flat}\Bigr\}.
$$
We now begin to calculate $\delta (J\mathcal L_{\g_{\omega}f}J)$. Since
$$
(J\mathcal L_{\g_{\omega}f}J)(X)=-\mathcal L_{\g f} J(X)-4N(\g_{\omega}f,X),
$$
defining $\mathcal N_f(X):=N(\g_{\omega}f,X)$, we have
$$
\delta(J\mathcal L_{\g_{\omega}f}J)=-\delta(\mathcal L_{\g f} J)-4\delta \mathcal N_f.
$$
Our whole proof of (\ref{lich}) is divided into two parts
$$
\Li(f)=-\frac{1}{2} \delta\Bigl\{I+4J \circ II\Bigr\}, \quad I=J[\delta(\mathcal L_{\g_{\omega}f}J)]^{\flat},\quad II=[\delta\mathcal N_f]^{\flat}.
$$ In the first part, we deal with $I$, and in the second part, we deal with $II$. 

We first introduce the following lemmas.
\begin{lem}[{\cite[Lemma 3.19]{MR2778451}}]\label{gradvar}
Let $(M,g)$ be a Riemannian manifold,  $\psi_t$ be the flow of the vector field $\xi\in\Gamma(TM)$, we have
\begin{align}
\frac{d}{dt}|_{t=0}\D^{\psi^*_tg}_XY
=\D^2_{X,Y}\xi+R(X,\xi)Y, \forall X,Y\in\Gamma(TM).
\end{align}
\end{lem}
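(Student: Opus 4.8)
The plan is to exploit the naturality of the Levi-Civita connection under isometries rather than to compute directly with Christoffel symbols. Since $\psi_t$ is the flow of $\xi$, the map $\psi_t$ is by construction an isometry from $(M,\psi_t^*g)$ to $(M,g)$. Naturality of the Levi-Civita connection under isometries therefore gives the pullback identity
$$
\D^{\psi_t^*g}_XY=(\psi_{-t})_*\bigl(\D^g_{(\psi_t)_*X}(\psi_t)_*Y\bigr),\qquad \forall X,Y\in\Gamma(TM).
$$
This identity is the heart of the argument: it reduces the whole computation to differentiating a pushforward expression at $t=0$, and it is exactly here that the curvature will enter.

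First I would record the three elementary variational facts coming from the definition of the Lie derivative of a vector field: $\frac{d}{dt}\big|_{t=0}(\psi_{-t})_*W=\mathcal{L}_\xi W=[\xi,W]$ for any $W\in\Gamma(TM)$, and correspondingly $\frac{d}{dt}\big|_{t=0}(\psi_t)_*X=-[\xi,X]=[X,\xi]$. Applying the Leibniz rule to the pullback identity at $t=0$, where the inner expression equals $\D^g_XY$, the derivative splits into three contributions according to which occurrence of $t$ is differentiated, namely
$$
\frac{d}{dt}\Big|_{t=0}\D^{\psi_t^*g}_XY=[\xi,\D_XY]+\D_{[X,\xi]}Y+\D_X[Y,\xi].
$$

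The remaining step is purely algebraic. I would expand each bracket using torsion-freeness in the form $[A,B]=\D_AB-\D_BA$ and collect terms. The terms carrying two covariant derivatives of $\xi$ assemble into the Hessian $\D^2_{X,Y}\xi=\D_X\D_Y\xi-\D_{\D_XY}\xi$, while the four leftover terms $\D_\xi\D_XY+\D_{\D_X\xi}Y-\D_{\D_\xi X}Y-\D_X\D_\xi Y$ coincide, after expanding $\D_{[X,\xi]}Y=\D_{\D_X\xi}Y-\D_{\D_\xi X}Y$, with $R(X,\xi)Y$ in the paper's sign convention $R(X,Y)=\D_{[X,Y]}-[\D_X,\D_Y]$. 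This produces the claimed formula $\D^2_{X,Y}\xi+R(X,\xi)Y$.

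The main obstacle, and really the only delicate point, is sign bookkeeping: the convention for the Lie derivative of a vector field (hence whether one gets $[X,\xi]$ or $[\xi,X]$) and the convention for the curvature tensor must be fixed consistently throughout, since a slip in either place flips the sign of the curvature term. An alternative route bypasses the naturality identity entirely: one differentiates the Koszul formula for $g_t=\psi_t^*g$ to obtain the standard first variation $g(\dot\D_XY,Z)=\tfrac12\bigl[(\D_Xh)(Y,Z)+(\D_Yh)(X,Z)-(\D_Zh)(X,Y)\bigr]$ with $h=\mathcal{L}_\xi g$, then substitutes $(\mathcal{L}_\xi g)(X,Y)=g(\D_X\xi,Y)+g(X,\D_Y\xi)$ and simplifies. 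That route is more computational and forces one to commute covariant derivatives, which is precisely where $R$ reappears, so I expect the naturality argument above to be the cleaner presentation.
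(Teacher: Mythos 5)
Your proof is correct. Note first that the paper offers no proof of this lemma at all: it is quoted directly from the cited reference, so there is no internal argument to compare against. Your naturality route is a legitimate and clean way to establish it. The pullback identity $\D^{\psi_t^*g}_XY=(\psi_{-t})_*\bigl(\D^g_{(\psi_t)_*X}(\psi_t)_*Y\bigr)$ is exactly the statement that $\psi_t:(M,\psi_t^*g)\to(M,g)$ is an isometry, the three Leibniz contributions $[\xi,\D_XY]+\D_{[X,\xi]}Y+\D_X[Y,\xi]$ are right with the standard convention $\mathcal L_\xi W=\frac{d}{dt}\big|_{t=0}(\psi_{-t})_*W=[\xi,W]$, and I checked the final regrouping against the paper's own convention $R(X,Y)=\D_{[X,Y]}-[\D_X,\D_Y]$ (stated in its Section 2): expanding all brackets by torsion-freeness gives $\D_X\D_Y\xi-\D_{\D_XY}\xi=\D^2_{X,Y}\xi$ plus $\D_\xi\D_XY+\D_{[X,\xi]}Y-\D_X\D_\xi Y$, which is precisely $R(X,\xi)Y$ in that convention. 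The only caveat worth recording is the one you already flag: the whole curvature term flips sign under the opposite curvature convention, so the consistency check with the paper's definition of $R$ is essential and you have done it correctly. The alternative you sketch (differentiating the Koszul formula to get $g(\dot\D_XY,Z)=\tfrac12[(\D_Xh)(Y,Z)+(\D_Yh)(X,Z)-(\D_Zh)(X,Y)]$ with $h=\mathcal L_\xi g$, then commuting derivatives) is the argument one typically finds in the cited textbook; your version trades that index-free but computational manipulation for a single conceptual input, at no loss of rigor.
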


\begin{lem}[{\cite[Lemma 2.2]{MR2747965}}]\label{bas1}
For any real 1-form $\alpha$, 
\begin{align*}
(\delta\D^{+}\alpha-\delta\D^{-}\alpha)(X)=&-\rho^*(JX,\alpha^{\sharp})-\sum_{j=1}^n\D_{Je_j}\alpha((\D_{e_j}J)X),
\end{align*}
where the star Ricci form $\rho^*$ is defined in (\ref{rho*}).
\end{lem}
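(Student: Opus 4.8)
The plan is to compute $\delta$ of the single tensor $S := \D^{+}\alpha-\D^{-}\alpha$ straight from the definition \eqref{delta}. First I would note that the invariant/anti-invariant splitting collapses to
\[
S(Y,Z)=\D^{+}\alpha(Y,Z)-\D^{-}\alpha(Y,Z)=\D\alpha(JY,JZ)=(\D_{JY}\alpha)(JZ),
\]
so $S$ is the $(0,2)$-tensor $\D\alpha$ precomposed with $J$ in both slots. Since $\delta S(X)=-\sum_i(\D_{e_i}S)(e_i,X)$, I would apply the Leibniz rule to $S=\D\alpha(J\cdot,J\cdot)$, splitting $\D_{e_i}S$ into three pieces: a main term $(\D_{e_i}\D\alpha)(Je_i,JX)$ in which the connection lands on $\D\alpha$, and two terms in which it lands on one of the two copies of $J$.

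Next I would dispose of the two derivative-of-$J$ terms. The piece where $\D J$ hits the first slot contracts against $\sum_i(\D_{e_i}J)e_i$, and this vanishes identically on an almost K\"ahler manifold, since $\sum_i(\D_{e_i}J)e_i=-\delta J=0$ by \corref{identity}. The piece where $\D J$ hits the second slot equals, after summation, exactly $\sum_j(\D_{Je_j}\alpha)\big((\D_{e_j}J)X\big)$, which (carrying the outer minus sign of $\delta$) is precisely the second term on the right-hand side of the claimed identity.

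The heart of the computation is the remaining main term $T_1:=\sum_i(\D_{e_i}\D\alpha)(Je_i,JX)$, and here I would use curvature. Writing $\D^2\alpha(W,A,B):=(\D_W\D\alpha)(A,B)$, the key observation is that the bivector $\sum_i e_i\otimes Je_i$ is independent of the orthonormal frame and is $J$-skew, i.e. $\sum_i e_i\otimes Je_i=-\sum_i Je_i\otimes e_i$; feeding the orthonormal frame $\{Je_i\}$ back in then gives $T_1=-\sum_i\D^2\alpha(Je_i,e_i,JX)$. Hence $2T_1$ becomes the antisymmetrization $\sum_i[\D^2\alpha(e_i,Je_i,JX)-\D^2\alpha(Je_i,e_i,JX)]$, which the Ricci identity for $1$-forms rewrites as $\sum_i\alpha\big(R(e_i,Je_i)(JX)\big)$. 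Finally the pair-symmetry $g(R(A,B)C,D)=g(R(C,D)A,B)$ of the Riemann tensor converts $\tfrac12\sum_i g\big(R(e_i,Je_i)JX,\alpha^{\sharp}\big)$ into $\tfrac12\sum_i g\big(R(JX,\alpha^{\sharp})e_i,Je_i\big)=\rho^*(JX,\alpha^{\sharp})$, matching the definition \eqref{rho*}. Assembling the three pieces with the outer sign of $\delta$ then yields the stated formula, and one sees the computation never used $\alpha$ closed, so it holds for any real $1$-form.

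I expect the main obstacle to be the bookkeeping of sign conventions: the paper takes $R(X,Y)=\D_{[X,Y]}-[\D_X,\D_Y]$, so I must carry this sign carefully through the Ricci identity $(\D^2_{X,Y}\alpha-\D^2_{Y,X}\alpha)(Z)=\alpha(R(X,Y)Z)$ and verify that the factor $\tfrac12$ together with the curvature pair-symmetry reproduces $\rho^*$ exactly rather than with the wrong sign or factor. The frame-independence (and $J$-skewness) of $\sum_i e_i\otimes Je_i$ also needs an explicit check to license the swap that is the crux of the argument.
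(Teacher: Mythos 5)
Your proof is correct. Note first that the paper itself gives no proof of this lemma: it is quoted verbatim from Lejmi \cite[Lemma 2.2]{MR2747965}, so there is no in-paper argument to compare against; your derivation is a valid, self-contained substitute. The three key points all check out. (i) The collapse $\D^{+}\alpha-\D^{-}\alpha=\D\alpha(J\cdot,J\cdot)$ follows immediately from the definition of $\D^{\pm}$, and the Leibniz expansion of $\delta$ applied to this tensor produces exactly the three pieces you list, with the $\sum_i(\D_{e_i}J)e_i$ contraction killed by \corref{identity} because $\D\alpha(\cdot,JX)$ is tensorial in its first slot. (ii) The frame swap is legitimate: $\sum_i\D^2\alpha(e_i,Je_i,JX)$ is the $g$-trace of the bilinear form $(W,A)\mapsto\D^2\alpha(W,JA,JX)$, so evaluating it in the orthonormal frame $\{Je_i\}$ gives $-\sum_i\D^2\alpha(Je_i,e_i,JX)$, which is the antisymmetrization you need. (iii) The signs work out with the paper's convention $R(X,Y)=\D_{[X,Y]}-[\D_X,\D_Y]$: one gets $\D^2_{X,Y}Z-\D^2_{Y,X}Z=-R(X,Y)Z$ on vector fields, hence $(\D^2_{X,Y}\alpha-\D^2_{Y,X}\alpha)(Z)=+\alpha(R(X,Y)Z)$ on $1$-forms, and the pair symmetry $g(R(A,B)C,D)=g(R(C,D)A,B)$ is insensitive to the overall sign convention, so $\tfrac12\sum_i g(R(e_i,Je_i)JX,\alpha^{\sharp})=\rho^*(JX,\alpha^{\sharp})$ exactly as defined in \eqref{rho*}. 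Assembled with the outer minus sign of $\delta$, this reproduces the stated identity for an arbitrary real $1$-form.
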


\begin{lem}\label{bas2}
For any $\xi\in\Gamma(TM)$, 
\begin{align*}
(\delta\D\xi)^{\flat}=\delta\D(\xi^{\flat}).
\end{align*}
\end{lem}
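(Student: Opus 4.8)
The plan is to recognise that $\delta\D$ is nothing but the rough (connection) Laplacian acting on a tensor field, and that this operator commutes with the flat isomorphism $\flat$ precisely because the Levi-Civita connection is compatible with $g$.

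First I would record the one fact that carries the argument: since $\D g=0$, the bundle isomorphism $\flat\colon TM\to T^*M$ is parallel, so that
\[
\D_X(\xi^\flat)=(\D_X\xi)^\flat,\qquad \forall\,X\in\Gamma(TM),\ \xi\in\Gamma(TM).
\]
Consequently $\flat$ commutes with every covariant derivative $\D_X$, and hence with the second covariant derivative $\D^2_{X,Y}:=\D_X\D_Y-\D_{\D_XY}$.

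Next I would rewrite $\delta\D$ intrinsically. Expanding (\ref{delta}) in a local orthonormal frame $\{e_i\}$, one checks for any tensor field $T$ that
\[
\delta(\D T)=-\sum_{i=1}^n e_i\lrcorner\,\D_{e_i}(\D T)=-\sum_{i=1}^n\D^2_{e_i,e_i}T,
\]
the rough Laplacian of $T$. Applying this to the $(1,1)$-tensor $\D\xi$ (viewed as $X\mapsto\D_X\xi$) gives $\delta\D\xi=-\sum_i\D^2_{e_i,e_i}\xi$, and applying it to the $(0,2)$-tensor $\D(\xi^\flat)$ gives $\delta\D(\xi^\flat)=-\sum_i\D^2_{e_i,e_i}(\xi^\flat)$.

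Finally, applying $\flat$ to the first of these and commuting it past each $\D^2_{e_i,e_i}$ by the parallelism above,
\[
(\delta\D\xi)^\flat=-\sum_{i=1}^n\bigl(\D^2_{e_i,e_i}\xi\bigr)^\flat=-\sum_{i=1}^n\D^2_{e_i,e_i}(\xi^\flat)=\delta\D(\xi^\flat),
\]
which is the assertion. I do not expect any genuine obstacle here: the entire content is the metric compatibility $\D g=0$, and the only care needed is the bookkeeping of tensor types, namely checking that $\delta$ from (\ref{delta}) is applied correctly to the $(1,1)$-tensor $\D\xi$ and to the $(0,2)$-tensor $\D(\xi^\flat)$; the Christoffel contribution $\D_{\D_{e_i}e_i}$ can moreover be made to vanish by working in a frame normal at the point, if one prefers.
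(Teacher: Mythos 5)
Your proof is correct, and it takes a genuinely different route from the paper's. The paper argues by $L^2$ duality: it introduces the auxiliary one-forms $\gamma(X)=\D_X\alpha(\xi)$ and $\eta(X)=\alpha(\D_X\xi)$ for an arbitrary test one-form $\alpha$, computes their divergences, integrates over the compact manifold, and concludes that $\la\alpha,(\delta\D\xi)^{\flat}\ra=\la\alpha,\delta\D(\xi^{\flat})\ra$ for all $\alpha$, whence the identity. Your argument instead observes that $\D g=0$ makes $\flat$ parallel, identifies $\delta\D$ with the rough Laplacian $-\sum_i\D^2_{e_i,e_i}$ via the definition (\ref{delta}), and commutes $\flat$ past each $\D^2_{e_i,e_i}$; this is a purely pointwise computation. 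What your approach buys is that it needs no integration and hence no compactness, and it makes transparent that the statement is really just the parallelism of the musical isomorphism; what the paper's approach buys is that it sidesteps the tensor-type bookkeeping you flag (how $\delta$ and the contraction act on the $(1,1)$-tensor $\D\xi$ versus the $(0,2)$-tensor $\D(\xi^{\flat})$) by only ever pairing against test forms. Your bookkeeping is in fact consistent with the paper's convention $\delta T=-\sum_i e_i\lrcorner\D_{e_i}T$, so there is no gap.
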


\begin{proof}
For any 1-form $\alpha$ on $M$, we define  $\gamma,\theta\in\Omega(M)$ by
\begin{align}
\gamma(X)=\D_X\alpha(\xi),\ \eta(X)=\alpha(\D_X\xi), \ \  \forall X\in\Gamma(TM),
\end{align}
then we have
\begin{align*}
\di\gamma=-\la\D\alpha,\D\xi\ra+\delta\D\alpha(\xi),\ \di\eta = -\la\D\alpha,\D\xi\ra+\alpha(\delta\D\xi),
\end{align*}
where locally $(\D\alpha,\D\xi)=g^{ij}\D_i\alpha(\D_j\xi)$, since $\int_M\di \gamma=\int_M\di\eta=0$, we have
\begin{align*}
\la\alpha, \delta\D\xi^{\flat}\ra=\la\delta\D\alpha, \xi^{\flat}\ra=\int_M\delta\D\alpha(\xi)=\int_M\alpha(\D^*\D\xi)=(\alpha, (\delta\D\xi)^{\flat}).
\end{align*}
\end{proof}

The first part has been partially calculated in \cite{MR4174303} in a local orthonormal frame $\{e_i, \ldots, e_n\}=\{e_1, e_2, \ldots e_m, Je_1, \ldots,Je_m\}$. We will follow the computation and give a global expression.
\begin{lem}\label{l1}
For any $X\in\Gamma(TM)$, 
\begin{align*}
I(X)
=
-2\delta(\D^{+}df-\D^{-}df)(X)-2\rho^*(JX,\g f)
-\Delta df(X)+2\ric(\g f, X).
\end{align*}
\end{lem}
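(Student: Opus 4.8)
The plan is to identify $\mathcal{L}_{\g_{\omega}f}J$ with the infinitesimal variation of $J$ along the flow of the symplectic gradient $Z:=\g_{\omega}f$, and then to differentiate the universal identity $\delta J=0$ of \corref{identity}. Let $\psi_t$ be the flow of $Z$. Since $Z$ is symplectic we have $\psi_t^{*}\omega=\omega$, so each pair $(J_t,g_t):=(\psi_t^{*}J,\psi_t^{*}g)$ is again an $\omega$-compatible almost K\"ahler structure, with $\frac{d}{dt}\big|_{0}J_t=\mathcal{L}_Z J$ and $\frac{d}{dt}\big|_{0}g_t=\mathcal{L}_Z g$. Applying \corref{identity} to each $(M,\omega,J_t)$ gives the identity $\delta^{g_t}J_t=0$ for every $t$, which I will differentiate at $t=0$.

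Differentiating $\delta^{g_t}J_t=0$ and using the Leibniz rule produces $\delta(\mathcal{L}_Z J)=-\dot\delta(J)$, where $\dot\delta$ is the first variation of the codifferential on endomorphisms. Writing $\delta A=-g^{ij}(\D_{\partial_i}A)(\partial_j)$, this variation has exactly two sources: the variation of the inverse metric $\frac{d}{dt}\big|_0 g_t^{ij}=-(\mathcal{L}_Z g)^{ij}$, and the variation of the Levi-Civita connection, which is precisely the content of \lemref{gradvar}, namely $\dot\D_X Y=\D^2_{X,Y}Z+R(X,Z)Y$. Since $(\dot\D_{\partial_i}J)\partial_j=\dot\D_{\partial_i}(J\partial_j)-J\dot\D_{\partial_i}\partial_j$, collecting terms (with signs tracked carefully) yields
\[
\delta(\mathcal{L}_Z J)= -(\mathcal{L}_Z g)^{ij}(\D_{\partial_i}J)\partial_j
+g^{ij}\bigl[\D^2_{\partial_i,J\partial_j}Z+R(\partial_i,Z)J\partial_j\bigr]
-Jg^{ij}\bigl[\D^2_{\partial_i,\partial_j}Z+R(\partial_i,Z)\partial_j\bigr].
\]

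Now $I=J[\delta(\mathcal{L}_Z J)]^{\flat}$, and since $Z=J\g f$ the outer $J$ converts $Z$-derivatives back into $\g f$-derivatives. I expect the four terms of the statement to arise as follows: the Hessian trace $g^{ij}\D^2_{\partial_i,\partial_j}Z$ together with the ordinary Ricci contraction $g^{ij}R(\partial_i,Z)\partial_j$ produce, via the Weitzenb\"ock relation between the rough and Hodge Laplacians, the terms $-\Delta df(X)+2\ric(\g f,X)$; the $J$-twisted curvature contraction $g^{ij}R(\partial_i,Z)J\partial_j$ is by definition \eqref{rho*} the star-Ricci form, yielding $-2\rho^{*}(JX,\g f)$; and the remaining Hessian terms together with the non-integrability term $(\mathcal{L}_Z g)^{ij}(\D_{\partial_i}J)\partial_j$—a contraction of $\mathcal{L}_Z g\sim J\mathcal{L}_Z J$ against $\D J\sim N$ via \eqref{213} and \lemref{djandn}—assemble into $-2\delta(\D^{+}df-\D^{-}df)(X)$ after invoking the splitting $\D df=\D^{+}df+\D^{-}df$.

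The main obstacle will be the bookkeeping in this final assembly. Because $J$ fails to be parallel away from the K\"ahler locus, every commutation of $J$ past a covariant derivative in $\D^2(J\g f)$ generates a $\D J$ term, and these must be steered precisely into the $\delta(\D^{+}df-\D^{-}df)$ combination rather than contaminating the Laplacian or the Ricci pieces; simultaneously the curvature terms must be disentangled through the Ricci identity $\D^2_{X,Y}-\D^2_{Y,X}=R(X,Y)$, whose $J$-twisted trace is exactly $\rho^{*}$. Keeping the three families of contributions—second-order (the Laplacian and the $\D^{\pm}$ parts), curvature ($\ric$ and $\rho^{*}$), and non-integrability ($\D J$ and $N$)—cleanly separated, with the correct signs and the factor $2$, is where the real effort lies.
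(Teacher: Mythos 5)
Your overall machinery is the right one — differentiate $\delta^{g_t}J_t=0$ along a gradient flow, feed the connection variation through \lemref{gradvar}, use Bianchi/Weitzenb\"ock to extract $\Delta df$ and $\ric$, and steer the residual $\D J$-contractions into $\delta(\D^{+}df-\D^{-}df)$ via \lemref{bas1}. But you are flowing along the wrong vector field. The quantity the lemma actually computes is $J\bigl[\delta(\mathcal L_{\g f}J)\bigr]^{\flat}$, the Lie derivative along the \emph{Riemannian} gradient $\g f$ (the displayed definition of $I$ with $\g_{\omega}f$ is a typo: the decomposition $J\mathcal L_{\g_{\omega}f}J=-\mathcal L_{\g f}J-4\mathcal N_f$ splits the Lichnerowicz computation into the $\mathcal L_{\g f}J$ piece, which is this lemma, and the Nijenhuis piece $\mathcal N_f=N(\g_{\omega}f,\cdot)$, which is handled separately in \lemref{l2}; the final assembly of \thmref{5} confirms this reading). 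By taking $Z=\g_{\omega}f=J\g f$ you instead compute $J\bigl[\delta(\mathcal L_{\g_{\omega}f}J)\bigr]^{\flat}$, which is a genuinely different one-form: the passage ``the outer $J$ converts $Z$-derivatives back into $\g f$-derivatives'' generates an uncontrolled family of $\D J\sim N$ terms that belong to the $\mathcal N_f$ part and cannot be absorbed into $\delta(\D^{+}df-\D^{-}df)$; and even on a K\"ahler manifold, where all such terms vanish, one finds $J[\delta(\mathcal L_{J\g f}J)]^{\flat}=-[\delta(\mathcal L_{\g f}J)]^{\flat}$, which is $-\alpha$ rather than the required $J\alpha$, so the stated right-hand side is simply not a formula for your object. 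A telltale sign is the bare $\ric$ (not $\ric^{+}$) in the statement: the $J$-symmetrisation to $\ric^{+}$ only happens later, when $I$ is combined with the Nijenhuis term.

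If you replace $Z$ by $\g f$ the plan goes through and is essentially the paper's proof: $\psi_t$ no longer preserves $\omega$, but $\delta^{g_t}J_t=\psi_t^{*}(\delta J)=0$ still holds by naturality of pullback, so the differentiation is legitimate; the variation of $g_t^{ij}$ contributes $-2(\Hess f)_{ij}(\D_{e_i}J)e_j$, the two curvature traces produced by \lemref{gradvar} cancel against the antisymmetrised Hessian trace by the first Bianchi identity, the rough Laplacian is converted to $\Delta df-\ric(\g f,\cdot)$ by Weitzenb\"ock (whence the coefficient $2$ on $\ric$), and \lemref{bas1} turns the surviving term $2\D_{Je_i}df((\D_{e_i}J)X)$ into exactly $-2\rho^{*}(JX,\g f)-2\delta(\D^{+}df-\D^{-}df)(X)$. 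You will also need \lemref{bas2} to identify $(\delta\D\g f)^{\flat}$ with $\delta\D df$ before invoking Weitzenb\"ock.
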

\begin{proof}
Denote $\psi_t$ the flow of the vector field $\g f$. By \corref{identity}, we have $\psi^*_t(\delta J)=\delta^{g_t}J_t=0$ where $g_t=\psi_t^*g, J_t = \psi_t^*J$. Acting  $\frac{d}{dt}|_{t=0}$ on $\delta^{g_t}J_t=0$ ,  we have
\begin{align}\label{lich10}
\delta (\mathcal L_{\g f}J)=\delta (\frac{d}{dt}|_{t=0}\psi_t^*J)=-\frac{d}{dt}|_{t=0}\delta^{\psi^*_tg}J=\frac{d}{dt}|_{t=0}\sum_{i, j=1}^n(\psi_t^*g)^{ij}\D^t_{e_i}(J)(e_j).
\end{align}
It follows from  (\ref{lich10}) that
\begin{align}
\label{lich11}
\begin{split}
\delta (\mathcal L_{\g f}J)
=&\sum_{i, j=1}^n\frac{d}{dt}|_{t=0}(\psi_t^*g)^{ij}\D_{e_i}(J)e_j+\delta_{ij}\frac{d}{dt}|_{t=0}\D^t_{e_i}(J)(e_j)\\
=&\sum_{i, j=1}^n-2(\Hess f)_{ij}\D_{e_i}(J)e_j+\delta_{ij}\frac{d}{dt}|_{t=0}[\D^t_{e_i}(Je_j)-J(\D^t_{e_i}e_j)].
\end{split}
\end{align}
By \lemref{gradvar}, we have
the expression of $\delta_{ij}\frac{d}{dt}|_{t=0}[\D^t_{e_i}(Je_j)-J(\D^t_{e_i}e_j)]$, which is
\begin{align}
\begin{split}
=&\sum_{i=1}^n[\D^2_{e_i,Je_i}\g f+R(e_i,\g f)Je_i-J\D^2_{e_i,e_i}\g f-JR(e_i,\g f)e_i]\\
=&\sum_{i=1}^n[\D^2_{e_i,Je_i}\g f+R(e_i,\g f)Je_i+J\delta\D\g f-J\ric(\g f)].
\end{split}
\end{align}
Thanks to the form of the local frame $\{e_i\}$, we have
\begin{align}\label{lich12}
\sum_{i=1}^n\D^2_{e_i,Je_i}\g f=\frac{1}{2}\sum_{i=1}^n\D^2_{e_i,Je_i}\g f-\D^2_{Je_i,e_i}\g f=-\frac{1}{2}R(e_i,Je_i)\g f.
\end{align}
The Bianchi identity gives
\begin{align}
\label{lich13}
\begin{split}
\sum_{i=1}^nR(e_i,\g f)Je_i=&\frac{1}{2}\sum_{i=1}^nR(e_i,\g f)Je_i-R(Je_i,\g f)e_i\\
=&\frac{1}{2}\sum_{i=1}^nR(e_i,Je_i)\g f.
\end{split}
\end{align}
Since (\ref{lich12}) and (\ref{lich13}) add up to zero, (\ref{lich11}) becomes
\begin{align}\label{lich14}
\delta (\mathcal L_{\g f}J)
=&-2\D_{e_i}(J)\D_{e_i}\g f+J\delta\D \g f-J\ric(\g f).
\end{align}
Applying \lemref{bas2} to the second term of the right hand side of (\ref{lich14}), we get
\begin{align*}
J(\delta (\mathcal L_{\g f}J))^{\flat}(X)=2\D_{Je_i}df(\D_{e_i}(J)X)-\delta\D df(X)+\ric(\g f, X).
\end{align*}
 Using the Weitzenbock formula on 1-form, we further write
\begin{align*}
J(\delta (\mathcal L_{\g f}J))^{\flat}(X)=2\D_{Je_i}df(\D_{e_i}(J)X)-\Delta df(X)+2\ric(\g f, X).
\end{align*}
Inserting \lemref{bas1}, we have the resulting identity.
\end{proof}

Now let's compute the second part $II$, we first introduce the following lemma.
\begin{lem}\label{aaaa}
Let $(M,g, J,\omega)$ be an almost K\"ahler manifold, let $\{e_1, \ldots , e_n\}$ be an orthonormal frame, then for any vector field $X\in\Gamma(TM)$, we have
\begin{align*}
\sum_{i=1}^n(\D^2_{X,e_i}J)e_i=0.
\end{align*}
\end{lem}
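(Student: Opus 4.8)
The plan is to obtain this second-order identity by differentiating the first-order identity $\delta J=0$ from \corref{identity}. Recall that $\delta J=-\sum_i(\D_{e_i}J)e_i$ is nothing but the metric trace of $\D J$ obtained by contracting the covariant-derivative direction against the argument of $J$; since $\delta J$ vanishes as a tensor on all of $M$, its covariant derivative $\D_X(\delta J)$ vanishes as well. The observation driving the proof is that the quantity $\sum_i(\D^2_{X,e_i}J)e_i$ in the statement is, up to sign, exactly this covariant derivative, because covariant differentiation commutes with metric contraction (here one uses $\D g=0$).

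To carry this out cleanly I would fix an arbitrary point $p\in M$ and compute in an orthonormal frame $\{e_i\}$ that is parallel at $p$, i.e. $\D_{e_j}e_k|_p=0$. This is legitimate because $\sum_i(\D^2_{X,e_i}J)e_i$ is a genuine metric contraction and hence frame-independent; in particular the frame need not be $J$-adapted, since all we invoke is that $\delta J=0$ as a tensor identity. At $p$ one has $\D_X e_i=0$, so $\D_X\D_{e_i}J=\D^2_{X,e_i}J$, and differentiating $\delta J=-\sum_i(\D_{e_i}J)e_i$ gives
\begin{align*}
0=\D_X(\delta J)\big|_p=-\sum_i(\D^2_{X,e_i}J)(e_i),
\end{align*}
where the left-hand side vanishes by \corref{identity}. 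Since $p$ is arbitrary, the stated identity holds on all of $M$.

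There is no serious analytic obstacle here; the computation is first-order in disguise. The only point that requires care is the index bookkeeping: I must verify that the contraction appearing in the lemma (the inner derivative slot $e_i$ paired with the argument $e_i$) is precisely the one produced by applying $\D_X$ to the contraction defining $\delta J$ (the derivative slot paired with the argument), so that no curvature or commutator term is silently discarded. An equivalent, frame-free phrasing — which I would keep as a sanity check — is simply that $\D(\delta J)=0$ and that the trace commutes with $\D$ because $\D g=0$; both formulations yield the same conclusion.
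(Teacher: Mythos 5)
Your proof is correct and follows essentially the same route as the paper's: both recognize $\sum_i(\D^2_{X,e_i}J)e_i$ as $-\D_X(\delta J)$ and invoke $\delta J=0$ from \corref{identity}. The only difference is bookkeeping — the paper expands in a general orthonormal frame and cancels the residual terms using the antisymmetry $g(\D_Xe_i,e_j)+g(e_i,\D_Xe_j)=0$, whereas you work in a frame parallel at a point (equivalently, appeal to $\D$ commuting with the metric trace), which is a legitimate shortcut since the contraction is frame-independent.
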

\begin{proof}
Direct computation shows
\begin{align*}
\sum_{i=1}^n(\D^2_{X,e_i}J)e_i=&\sum_{i=1}^n[(\D_X\D_{e_i}J)e_i-(\D_{\D_Xe_i}J)e_i]\\
=&\sum_{i=1}^n\D_X((\D_{e_i}J)e_i)-(\D_{e_i}J)\D_Xe_i-(\D_{\D_Xe_i}J)e_i.
\end{align*}
\lemref{identity} implies $\delta J=-\sum_{i=1}^n(\D_{e_i}J)e_i=0.$ So we have
\begin{align}
\label{lichequa01}
\begin{split}
\sum_{i=1}^n(\D^2_{X,e_i}J)e_i
=&\sum_{i=1}^n-(\D_{e_i}J)\D_Xe_i-(\D_{\D_Xe_i}J)e_i\\
=&\sum_{i, j=1}^n-g(\D_Xe_i, e_j)(\D_{e_i}J)e_j-g( \D_Xe_i, e_j)(\D_{e_j}J)e_i\\
=&\sum_{i, j=1}^n-(g(\D_Xe_i, e_j)+g( e_i, \D_Xe_j))(\D_{e_i}J)e_j.
\end{split}
\end{align}
The last line in (\ref{lichequa01}) vanishes since  $g(e_i,e_j)=\delta_{ij}$, 
$$
g(\D_Xe_i, e_j)+g( e_i, \D_Xe_j)=Xg(e_i,e_j)=0.
$$
\end{proof}

\begin{lem}\label{l2}
For any $X\in\Gamma(TM)$, 
\begin{align*}
II(X)=(\delta \mathcal N_f)^{\flat}(X)
=-\frac{1}{2}\rho^*(X,\g f)-\frac{1}{2}\ric(X,\g_{\omega}f).
\end{align*}
\end{lem}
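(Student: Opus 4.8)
The plan is to rewrite the contraction $\mathcal N_f$ entirely in terms of $\D J$ and $\g f$, and then compute its codifferential by splitting into a curvature term and a Hessian term; the Hessian term will vanish by symmetry, while the curvature term must be reorganised into exactly $\rho^*$ and $\ric$. The first step is the pointwise identity
\begin{align*}
(\mathcal N_f(Y),Z)=\tfrac12\bigl((\D_Z J)\g f,\,Y\bigr),\qquad \forall Y,Z\in\Gamma(TM).
\end{align*}
To obtain it I would start from \lemref{djandn}, i.e.\ \eqref{29}, choosing $W$ with $JW=Z$ so that $(Z,N(\g_{\omega}f,Y))=\tfrac12((\D_{-JZ}J)\g_{\omega}f,Y)$, then apply \eqref{23} to replace $\D_{-JZ}J$ by $-(\D_Z J)J$, and finally use $J\g_{\omega}f=-\g f$. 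This reduces $\mathcal N_f$ to first derivatives of $J$ only.

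Next I would compute $\delta\mathcal N_f=-\sum_i(\D_{e_i}\mathcal N_f)e_i$ at a fixed point $p$, in a frame parallel at $p$, pairing against a test vector $X$ also extended parallel at $p$. The product rule together with the identity above yields
\begin{align*}
(\delta\mathcal N_f,X)=-\tfrac12\Bigl(\sum_i\bigl((\D^2_{e_i,X}J)\g f,e_i\bigr)+\sum_i\bigl((\D_X J)\D_{e_i}\g f,e_i\bigr)\Bigr)=:-\tfrac12(S_1+S_2).
\end{align*}
For $S_2$, the skew-symmetry of $\D_X J$ and \eqref{29} give $S_2=-2\sum_{i,j}\Hess f(e_i,e_j)\,(JX,N(e_i,e_j))$; since $\Hess f$ is symmetric and $N$ is antisymmetric in $(i,j)$, this double sum vanishes, so $S_2=0$.

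It remains to evaluate $S_1$. Using skew-symmetry of $\D^2_{e_i,X}J$ I rewrite $S_1=-\sum_i(\g f,(\D^2_{e_i,X}J)e_i)$, then commute the two covariant derivatives: the symmetric piece $\sum_i(\D^2_{X,e_i}J)e_i$ vanishes by \lemref{aaaa}, leaving only the curvature commutator, so that
\begin{align*}
S_1=\sum_i\bigl(\g f,R(e_i,X)(Je_i)\bigr)-\sum_i\bigl(\g f,JR(e_i,X)e_i\bigr).
\end{align*}
The second sum is $\ric(X,\g_{\omega}f)$ after moving $J$ across the metric. The first sum I would convert into $\rho^*(X,\g f)$ by applying the first Bianchi identity, the pair symmetry of $R$, the skew-symmetry of the curvature operator, and the substitution $e_i\leftrightarrow Je_i$, after which definition \eqref{rho*} is recognised; this gives $S_1=\rho^*(X,\g f)+\ric(X,\g_{\omega}f)$ and hence the claim.

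The delicate point is this last step. Several curvature contractions arising here are a priori independent invariants, and the content of the lemma is precisely that only the combination $\rho^*+\ric$ survives. The fragile bookkeeping is closing the Bianchi/pair-symmetry manipulation via the $e_i\leftrightarrow Je_i$ reindexing to isolate $\rho^*(X,\g f)$, together with tracking the paper's sign conventions for $R$ (here $R(X,Y)=\D_{[X,Y]}-[\D_X,\D_Y]$) and for $\ric$ so that the final signs match $-\tfrac12\rho^*(X,\g f)-\tfrac12\ric(X,\g_{\omega}f)$.
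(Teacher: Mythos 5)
Your proposal is correct and follows essentially the same route as the paper's proof: both reduce $\mathcal N_f$ to $\D J$ via \eqref{29} and \eqref{23}, split the codifferential into a Hessian term (killed by pairing a symmetric with an antisymmetric object) and a second-derivative term, invoke \lemref{aaaa} to isolate the curvature commutator, and close with the Bianchi/pair-symmetry reindexing to recognise $\rho^*$ and $\ric$. The only cosmetic differences are your use of a frame parallel at a point instead of tracking the $\D_{e_i}e_i$ corrections, and phrasing the vanishing of $S_2$ via the antisymmetry of $N$ rather than as the trace of a self-adjoint composed with an anti-self-adjoint endomorphism.
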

\begin{proof}
Choosing an auxiliary local orthonormal frame $\{e_1, \ldots, e_n\}$, we have
\begin{align*}
\delta \mathcal N_f=&-\sum_{i=1}^n\D_{e_i}(\mathcal N_f(e_i))+\sum_{i=1}^n\mathcal N_f(\D_{e_i}e_i)\\
=&-\sum_{i=1}^nD_{e_i}N(\g_{\omega}f,e_i)+\sum_{i=1}^nN(\g_{\omega}f,\D_{e_i}e_i).
\end{align*}
Thus 
\begin{align*}
&(\delta \mathcal N_f)^{\flat}(X)\\
=&-g( D_{e_i}N(\g_{\omega}f,e_i), X)+g( N(\g_{\omega}f,\D_{e_i}e_i), X)\\
=&-D_{e_i}g( N(\g_{\omega}f,e_i), X)+g( N(\g_{\omega}f,e_i), D_{e_i}X)+g( N(\g_{\omega}f,\D_{e_i}e_i), X).
\end{align*}
Here, we omit the summary notation $\sum_{i=1}^n$. Inserting \eqref{29}, we get
\begin{align*}
&2(\delta \mathcal N_f)^{\flat}(X)\\
=& -D_{e_i}g(\D_{JX}J)e_i,\g_{\omega}f)+g((\D_{JD_{e_i}X}J)e_i,\g_{\omega}f)+g((\D_{JX}J)\D_{e_i}e_i,\g_{\omega}f) \\
=& D_{e_i}g((\D_{X}J)e_i, \g f)-g((\D_{D_{e_i}X}J)e_i, \g f)-g((\D_{X}J)\D_{e_i}e_i, \g f) \\
=& g((D_{e_i}\D_{X}J)e_i, \g f)-g((\D_{D_{e_i}X}J)e_i, \g f)+g((\D_{X}J)e_i, \D_{e_i}\g f) \\
=& g((D^2_{e_i,X}J)e_i, \g f)+g((\D_{X}J)e_i, \D_{e_i}\g f) \\
=& g((D^2_{e_i,X}J)e_i, \g f)+\Hess f((\D_{X}J)e_i,e_i) .
\end{align*}

 Define $H_f\in \Gamma(\en(TM))$ by $H_f(X)=\D_{X}\g f$, i.e.
$
g(H_f(X), Y)=\Hess f(X,Y).
$
Thus $H_f$ is self-adjoint with respect to $g$, i.e.
\begin{align*}
g(H_f(X),Y)=g(X,H_f(Y)).
\end{align*}
But $\D_XJ$ is  anti-self-adjoint since $J$ is anti-self-adjoint, we see
$$
\sum_{i=1}^n\Hess f((\D_{X}J)e_i,e_i)=\tr (H_f\circ D_XJ)=0.
$$ Then we have
\begin{align*}
(\delta \mathcal N_f)^{\flat}(X)=&\frac{1}{2}g((D^2_{e_i,X}J)e_i, \g f).
\end{align*}
We apply \lemref{aaaa} to conclude
\begin{align*}
(\delta \mathcal N_f)^{\flat}(X)=&\frac{1}{2}g((D^2_{e_i,X}J)e_i-(D^2_{X,e_i}J)e_i, \g f)\\
=&-\frac{1}{2}g((R(e_i,X)J)e_i, \g f)\\
=&-\frac{1}{2}g( R(e_i,X)(Je_i)-JR(e_i,X)e_i, \g f)
\end{align*}
Using (\ref{lich13}) again, we have
\begin{align*}
(\delta \mathcal N_f)^{\flat}(X)=&-\frac{1}{4} R(e_i,Je_i,X,\g f)-\frac{1}{2}\ric(X,\g_{\omega}f)\\
=&-\frac{1}{2}\rho^*(X,\g f)-\frac{1}{2}\ric(X,\g_{\omega}f).
\end{align*}
\end{proof}
We now complete the proof of \thmref{5}, it follows from \lemref{l1} and  \lemref{l2} that 
\begin{align*}
&J(\delta(J\mathcal L_{\g_{\omega}f}J))^{\flat}(X)\\
=&(\delta(\mathcal L_{\g f} J))^{\flat}(JX)+4(\delta \mathcal N_f)^{\flat}(JX)\\
=&\Delta df(X)
-2\ric(\g f, X)-2\ric(JX,J\g f)+2\delta(\D^{+}df-\D^{-}df)(X)\\
=&\Delta d f(X)-4\ric^+(\g f, X)+2\delta(\D^{+}df-\D^{-}df)(X)\\
=&\Delta d f(X)-4(\iota_{\g f}\ric^+)(X)+2\delta(\D^{+}df-\D^{-}df)(X).
\end{align*}
Since 
\begin{align*}
\delta(\iota_{\g f}\ric^+)=&-\D_{e_i}(\iota_{\g f}\ric^+)(e_i)\\
=&-\D_{e_i}(\ric^+(\g f, e_i))+\ric^+(\g f, \D_{e_i}e_i)\\
=&-(\D_{e_i}\ric^+)(\g f, e_i)+\ric^+(\D_{e_i}\g f, e_i)\\
=&\delta\ric^+(\g f)-(\ric^+,\Hess f),
\end{align*} 
we obtain
\begin{align*}
\delta(J(\delta(J\mathcal L_{\g_{\omega}f}J))^{\flat})=&\Delta^2f-4g(\delta\ric^+, df)+4g(\ric^+, \Hess f)+2\delta\delta(\D^{+}df-\D^{-}df)\\
=&\Delta^2f-4g(\delta\ric^+, df)+4g(\rho,dd^c f)+2\delta\delta(\D^{+}df-\D^{-}df).
\end{align*}

\bibliographystyle{plain}
%%
% requires a BiBTeX file sample.bib
\bibliography{ric}

\begin{thebibliography}{10}

\bibitem{MR2807093}
Vestislav Apostolov, David M.~J. Calderbank, Paul Gauduchon, and Christina~W.
  T\o~nnesen Friedman.
\newblock Extremal {K}\"{a}hler metrics on projective bundles over a curve.
\newblock {\em Adv. Math.}, 227(6):2385--2424, 2011.

\bibitem{MR1969266}
Vestislav Apostolov and Tedi Dr\u{a}ghici.
\newblock The curvature and the integrability of almost-{K}\"{a}hler manifolds:
  a survey.
\newblock In {\em Symplectic and contact topology: interactions and
  perspectives ({T}oronto, {ON}/{M}ontreal, {QC}, 2001)}, volume~35 of {\em
  Fields Inst. Commun.}, pages 25--53. Amer. Math. Soc., Providence, RI, 2003.

\bibitem{MR3671939}
Robert~J. Berman and Bo~Berndtsson.
\newblock Convexity of the {$K$}-energy on the space of {K}\"{a}hler metrics
  and uniqueness of extremal metrics.
\newblock {\em J. Amer. Math. Soc.}, 30(4):1165--1196, 2017.

\bibitem{MR2371700}
Arthur~L. Besse.
\newblock {\em Einstein manifolds}.
\newblock Classics in Mathematics. Springer-Verlag, Berlin, 2008.
\newblock Reprint of the 1987 edition.

\bibitem{MR645743}
Eugenio Calabi.
\newblock Extremal {K}\"{a}hler metrics.
\newblock In {\em Seminar on {D}ifferential {G}eometry}, volume 102 of {\em
  Ann. of Math. Stud.}, pages 259--290. Princeton Univ. Press, Princeton, N.J.,
  1982.

\bibitem{MR780039}
Eugenio Calabi.
\newblock Extremal {K}\"{a}hler metrics. {II}.
\newblock In {\em Differential geometry and complex analysis}, pages 95--114.
  Springer, Berlin, 1985.

\bibitem{MR3224716}
Xiuxiong Chen and Song Sun.
\newblock Calabi flow, geodesic rays, and uniqueness of constant scalar
  curvature {K}\"{a}hler metrics.
\newblock {\em Ann. of Math. (2)}, 180(2):407--454, 2014.

\bibitem{MR2121892}
Xiuxiong Chen and Gang Tian.
\newblock Uniqueness of extremal {K}\"{a}hler metrics.
\newblock {\em C. R. Math. Acad. Sci. Paris}, 340(4):287--290, 2005.

\bibitem{MR1622931}
S.~K. Donaldson.
\newblock Remarks on gauge theory, complex geometry and {$4$}-manifold
  topology.
\newblock In {\em Fields {M}edallists' lectures}, volume~5 of {\em World Sci.
  Ser. 20th Century Math.}, pages 384--403. World Sci. Publ., River Edge, NJ,
  1997.

\bibitem{MR1736211}
S.~K. Donaldson.
\newblock Symmetric spaces, {K}\"{a}hler geometry and {H}amiltonian dynamics.
\newblock In {\em Northern {C}alifornia {S}ymplectic {G}eometry {S}eminar},
  volume 196 of {\em Amer. Math. Soc. Transl. Ser. 2}, pages 13--33. Amer.
  Math. Soc., Providence, RI, 1999.

\bibitem{MR2103718}
Simon~K. Donaldson.
\newblock Conjectures in {K}\"{a}hler geometry.
\newblock In {\em Strings and geometry}, volume~3 of {\em Clay Math. Proc.},
  pages 71--78. Amer. Math. Soc., Providence, RI, 2004.

\bibitem{MR1207204}
Akira Fujiki.
\newblock Moduli space of polarized algebraic manifolds and {K}\"{a}hler
  metrics [translation of {S}\^{u}gaku {\bf 42} (1990), no. 3, 231--243;
  {MR}1073369 (92b:32032)].
\newblock volume~5, pages 173--191. 1992.
\newblock Sugaku Expositions.

\bibitem{MR1314584}
Akito Futaki and Toshiki Mabuchi.
\newblock Bilinear forms and extremal {K}\"{a}hler vector fields associated
  with {K}\"{a}hler classes.
\newblock {\em Math. Ann.}, 301(2):199--210, 1995.

\bibitem{MR4285699}
Oscar Garc\'{\i}a-Prada and Dietmar Salamon.
\newblock A moment map interpretation of the {R}icci form,
  {K}\"{a}hler-{E}instein structures, and {T}eichm\"{u}ller spaces.
\newblock In {\em Integrability, quantization, and geometry {II}. {Q}uantum
  theories and algebraic geometry}, volume 103 of {\em Proc. Sympos. Pure
  Math.}, pages 223--255. Amer. Math. Soc., Providence, RI, [2021] \copyright
  2021.

\bibitem{MR4234988}
Oscar Garc\'{\i}a-Prada, Dietmar~A. Salamon, and Samuel Trautwein.
\newblock Complex structures, moment maps, and the {R}icci form.
\newblock {\em Asian J. Math.}, 24(5):821--853, 2020.

\bibitem{MR1456265}
Paul Gauduchon.
\newblock Hermitian connections and {D}irac operators.
\newblock {\em Boll. Un. Mat. Ital. B (7)}, 11(2, suppl.):257--288, 1997.

\bibitem{gauduchon2010calabi}
Paul Gauduchon.
\newblock Calabi’s extremal k{\"a}hler metrics: An elementary introduction.
\newblock {\em Preprint}, 34:37--40, 2010.

\bibitem{MR4118148}
Julien Keller and Mehdi Lejmi.
\newblock On the lower bounds of the {$L^2$}-norm of the {H}ermitian scalar
  curvature.
\newblock {\em J. Symplectic Geom.}, 18(2):537--558, 2020.

\bibitem{MR1916979}
Jongsu Kim and Chanyoung Sung.
\newblock Deformations of almost-{K}\"{a}hler metrics with constant scalar
  curvature on compact {K}\"{a}hler manifolds.
\newblock {\em Ann. Global Anal. Geom.}, 22(1):49--73, 2002.

\bibitem{MR1393941}
Shoshichi Kobayashi and Katsumi Nomizu.
\newblock {\em Foundations of differential geometry. {V}ol. {II}}.
\newblock Wiley Classics Library. John Wiley \& Sons, Inc., New York, 1996.
\newblock Reprint of the 1969 original, A Wiley-Interscience Publication.

\bibitem{MR1274118}
C.~LeBrun and S.~R. Simanca.
\newblock Extremal {K}\"{a}hler metrics and complex deformation theory.
\newblock {\em Geom. Funct. Anal.}, 4(3):298--336, 1994.

\bibitem{MR3967373}
Eveline Legendre.
\newblock A note on extremal toric almost {K}\"{a}hler metrics.
\newblock In {\em Moduli of {K}-stable varieties}, volume~31 of {\em Springer
  INdAM Ser.}, pages 53--74. Springer, Cham, 2019.

\bibitem{MR2747965}
Mehdi Lejmi.
\newblock Extremal almost-{K}\"{a}hler metrics.
\newblock {\em Internat. J. Math.}, 21(12):1639--1662, 2010.

\bibitem{MR2661166}
Mehdi Lejmi.
\newblock Stability under deformations of extremal almost-{K}\"{a}hler metrics
  in dimension 4.
\newblock {\em Math. Res. Lett.}, 17(4):601--612, 2010.

\bibitem{MR3969453}
Haozhao Li, Bing Wang, and Kai Zheng.
\newblock Correction to: {R}egularity scales and convergence of the {C}alabi
  flow [ {MR}3833786].
\newblock {\em J. Geom. Anal.}, 29(3):3010, 2019.

\bibitem{MR0124009}
Andr\'{e} Lichnerowicz.
\newblock {\em G\'{e}om\'{e}trie des groupes de transformations}.
\newblock Travaux et Recherches Math\'{e}matiques, III. Dunod, Paris, 1958.

\bibitem{lichnerowicz1958geometrie}
Andr{\'e} Lichnerowicz.
\newblock {\em G{\'e}om{\'e}trie des groupes de transformations}, volume~3.
\newblock Dunod, 1958.

\bibitem{MR2078878}
Toshiki Mabuchi.
\newblock Uniqueness of extremal {K}\"{a}hler metrics for an integral
  {K}\"{a}hler class.
\newblock {\em Internat. J. Math.}, 15(6):531--546, 2004.

\bibitem{MR3674984}
Dusa McDuff and Dietmar Salamon.
\newblock {\em Introduction to symplectic topology}.
\newblock Oxford Graduate Texts in Mathematics. Oxford University Press,
  Oxford, third edition, 2017.

\bibitem{MR2778451}
Vincent Minerbe.
\newblock On the asymptotic geometry of gravitational instantons.
\newblock {\em Ann. Sci. \'{E}c. Norm. Sup\'{e}r. (4)}, 43(6):883--924, 2010.

\bibitem{mohsen2003symplectomorphismes}
O~Mohsen.
\newblock Symplectomorphismes hamiltoniens et m{\'e}triques
  k{\"a}hl{\'e}riennes.
\newblock {\em M{\'e}moire de DEA, Univ. Paris}, 7, 2003.

\bibitem{MR3469435}
Peter Petersen.
\newblock {\em Riemannian geometry}, volume 171 of {\em Graduate Texts in
  Mathematics}.
\newblock Springer, Cham, third edition, 2016.

\bibitem{MR3010175}
Yann Rollin, Santiago~R. Simanca, and Carl Tipler.
\newblock Deformation of extremal metrics, complex manifolds and the relative
  {F}utaki invariant.
\newblock {\em Math. Z.}, 273(1-2):547--568, 2013.

\bibitem{MR2663648}
G\'{a}bor Sz\'{e}kelyhidi.
\newblock The {K}\"{a}hler-{R}icci flow and {$K$}-polystability.
\newblock {\em Amer. J. Math.}, 132(4):1077--1090, 2010.

\bibitem{MR1787650}
Gang Tian.
\newblock {\em Canonical metrics in {K}\"{a}hler geometry}.
\newblock Lectures in Mathematics ETH Z\"{u}rich. Birkh\"{a}user Verlag, Basel,
  2000.
\newblock Notes taken by Meike Akveld.

\bibitem{MR4174303}
Caroline Vernier.
\newblock Almost-{K}\"{a}hler smoothings of compact complex surfaces with
  {$A_1$} singularities.
\newblock {\em J. Symplectic Geom.}, 18(5):1319--1390, 2020.

\bibitem{MR2249564}
Lijing Wang.
\newblock Hessians of the {C}alabi functional and the norm function.
\newblock {\em Ann. Global Anal. Geom.}, 29(2):187--196, 2006.

\end{thebibliography}

\end{document}